\newtheorem{theorem}{Theorem}[section]
\newtheorem{proposition}[theorem]{Proposition}
\newtheorem{conjecture}[theorem]{Conjecture}
\theoremstyle{definition}
\newtheorem{definition}[theorem]{Definition}
\newtheorem{example}[theorem]{Example}
\theoremstyle{remark}
\newtheorem{remark}[theorem]{Remark}
\numberwithin{equation}{section}
\newcommand{\suchthat}{\ | \ }
\newcommand{\marked}{\mathbb{M}}
\newcommand{\punct}{\mathbb{P}}
\newcommand{\surf}{(\Sigma,\marked)}
\newcommand{\qtau}{Q(\tau)}
\newcommand{\stau}{S(\tau)}
\newcommand{\qstau}{(\qtau,\stau)}
\newcommand{\qsigma}{Q(\sigma)}
\newcommand{\ssigma}{S(\sigma)}
\newcommand{\ka}{\mathbb{C}}
\newcommand{\complete}[1]{R\langle\hspace{-0.05cm}\langle #1\rangle\hspace{-0.05cm}\rangle}
\newcommand{\usual}[1]{R\langle #1\rangle}
\newcommand{\usualRQ}{\usual{Q}}
\newcommand{\completeRQ}{\complete{Q}}
\newcommand{\maxid}{\mathfrak{m}}
\newcommand{\jacob}[2]{\mathcal{P}(#1,#2)}
\newcommand{\jacobqs}{\jacob{Q}{S}}
\newcommand{\qstriv}[2]{(#1_{\operatorname{triv}},#2_{\operatorname{triv}})}
\newcommand{\qsred}[2]{(#1_{\operatorname{red}},#2_{\operatorname{red}})}
\newcommand{\Hom}{\operatorname{Hom}}
\newcommand{\Ginz}[2]{\widehat{\Gamma}(#1,#2)}
\newcommand{\bx}{\mathbf{x}}
\begin{document}

\title{On triangulations, quivers with potentials and mutations}

\author{Daniel Labardini-Fragoso}

\address{Instituto de Matem\'aticas, Universidad Nacional Aut\'onoma de M\'exico}

\email{labardini@matem.unam.mx}

\begin{abstract}
In this survey article we give a brief account of constructions and results concerning the quivers with potentials associated to triangulations of surfaces with marked points. Besides the fact that the mutations of these quivers with potentials are compatible with the flips of triangulations, we mention some recent results on the representation type of Jacobian algebras and the uniqueness of non-degenerate potentials. We also mention how the the quivers with potentials associated to triangulations give rise to CY2 and CY3 triangulated categories that have turned out to be useful in the subject of stability conditions and in theoretical physics.
\end{abstract}

\maketitle

\tableofcontents

\section*{Introduction}

Around 11 years ago, Fomin-Zelevinsky defined cluster algebras \emph{in an attempt to create an algebraic framework for dual canonical bases and total positivity in semisimple groups} (cf. \cite{FZ1}, from whose abstract the emphasized line was taken). Since then, cluster algebras have been found to 
possess interactions with a wide variety of areas, like Poisson geometry, integrable systems, Teichm\"{u}ller theory, Lie theory, representation theory of associative algebras, hyperbolic 3-manifolds, commutative and non-commutative algebraic geometry, mirror symmetry, KP solitons, and even with string theory in Physics.

Fundamental in the definition of cluster algebras is the notion of \emph{quiver mutation}, which is a combinatorial operation on quivers.
In a representation-theoretic approach to cluster algebras, Derksen-Weyman-Zelevinsky developed in \cite{DWZ1} a \emph{mutation theory of quivers with potentials}, which lifts quiver mutation from the combinatorial to the algebraic level. A quiver with potential (\emph{QP} for short) is a pair consisting of a quiver $Q$ and a potential $S$ on $Q$, that is, a linear combination of cycles of $Q$. The mutation theory of quivers with potentials ultimately leads to the notion of mutation of representations of QPs, thus providing a representation-theoretic interpretation for the combinatorial operation of quiver mutations.

On the other hand, a class of cluster algebras arising from triangulations of Riemann surfaces was introduced and systematically studied in \cite{FST} by Fomin-Shapiro-Thurston. These authors show that the elementary operation of \emph{flip} of arcs in triangulations can be interpreted as the operation of mutation inside the corresponding cluster algebra. In particular, they show that every triangulation $\tau$ of a Riemann surface with marked points has a naturally associated quiver $\qtau$, and that the flip of triangulations is reflected in the quiver level as quiver mutation.

In this survey article we describe a construction from \cite{L1} that associates a potential $\stau$ to each triangulation $\tau$, in such a way that the operation of flip is reflected at the level of QPs as the mutation of Derksen-Weyman-Zelevinsky. Then we state some results regarding the finite-dimensionality and the representation type of the Jacobian algebras of the QPs $\qstau$, as well as the uniqueness of non-degenerate potentials for the quivers $\qtau$. Finally, we mention a couple of applications that the QPs $\qstau$ have had in the subject of stability conditions and in theoretical physics.

The paper is divided in five sections. In Section \ref{sec:3-ops}, after recalling some elementary facts concerning (complete) path algebras of quivers (Subsection \ref{subsec:quivers}), we describe the combinatorial operation of quiver mutation (Subsection \ref{subsec:quiver-mutations}), then we give a quick overview of mutations of quivers with potentials (Subsection \ref{subsec:QP-mutations}), and close the section with a brief reminder of the setup of surfaces with marked points, their triangulations and flips of triangulations (Subsection \ref{subsec:triangulations}).

In Section \ref{sec:QP-of-triangulations}, we quickly say how to attach a quiver $\qtau$ to each triangulation $\tau$ of a surface with marked points and state the compatibility between flips and quiver mutations (Subsection \ref{subsec:quiver-of-triangulations}). In Subsection \ref{subsec:potential-of-triangulations} we lift the story to the level of QPs, that is, we describe a way to associate a QP $\qstau$ to each triangulation $\tau$, and state the compatibility between flips and QP-mutations.

In Section \ref{sec:Jacobian-algebras} we state results regarding the finite-dimensionality and the representation type of the Jacobian algebras of the QPs $\qstau$. It turns out that the Jacobian algebras $\jacob{\qtau}{\stau}$ are always finite-dimensional and tame. Also, the tori with exactly one marked point are the only surfaces whose triangulations have quivers that admit a non-degenerate potential with wild Jacobian algebra. Moreover, if $Q$ is a quiver admitting a non-degenerate potential with tame Jacobian algebra, then $Q$ is either the quiver of a triangulation, or mutation-equivalent to one of nine exceptional quivers.

In section \ref{sec:uniqueness} we mention some results on the uniqueness of non-degenerate potentials for the quivers $\qtau$. It turns out that most quivers arising from surfaces admit exactly one non-degenerate potential up to right-equivalence.

Finally, in Section \ref{sec:applications} we indicate how QPs give rise to triangulated Calabi-Yau categories, and mention how, via such categories, the QPs $\qstau$ have had applications in the subject of stability conditions and in theoretical physics.

\section{Three operations}\label{sec:3-ops}

\subsection{Quivers and path algebras}\label{subsec:quivers}

Recall that a \emph{quiver} is a finite graph with oriented edges, that is, a quadruple $Q=(Q_0,Q_1,t,h)$ consisting of a finite set of \emph{vertices} $Q_0$, a finite set of \emph{arrows}, and a pair of functions $t,h:Q_1\rightarrow Q_0$ that determine the \emph{tail} $t(\alpha)$ and the head $h(\alpha)$ of any given arrow $\alpha\in Q_1$. We write $\alpha:j\rightarrow i$ to indicate that $t(\alpha)=j$ and $h(\alpha)=i$. We will always assume that the quivers we work with are \emph{loop-free}, that is, that no arrow $\alpha$ satisfies $h(\alpha)=t(\alpha)$.

A \emph{path of length} $\ell>0$ on $Q$ is a sequence $a=\alpha_1\alpha_2\ldots \alpha_\ell$ of arrows with $t(\alpha_j)=h(\alpha_{j+1})$ for $j=1,\ldots,\ell-1$. We set $h(a)=h(\alpha_1)$ and $t(a)=t(\alpha_\ell)$. Positive-length paths are composed as functions, that is, if $a=\alpha_1\ldots \alpha_\ell$ and $b=\beta_1\ldots \beta_{\ell'}$ are paths with $h(b)=t(a)$, then the concatenation $ab$ is defined as the path $\alpha_1,\ldots \alpha_\ell\beta_1\ldots \beta_{\ell'}$, which starts at $t(ab)=t(\beta_{\ell'})$ and ends at $h(ab)=h(\alpha_1)$.

For each vertex $i\in Q_0$ we formally introduce a \emph{length-0 path} $e_i$. By $A^\ell$ we denote the $\ka$-vector space with basis the set of paths of length $\ell\geq 0$. We use the notations $R=A^0$ and $A=A^1$. Note that $R$ is the vector space with basis the set of length-0 paths, hence has dimension equal to the cardinality of $Q_0$, while $A$ is the vector space with basis the set of arrows of $Q$. If we define $e_ie_j=\delta_{i,j}e_i$, where $\delta_{i,j}$ is the \emph{Kronecker delta}, $R$ becomes a commutative $\ka$-algebra. Furthermore, if we define $e_i\alpha=\delta_{i,h(\alpha)}\alpha$ and $\alpha e_i=\delta_{i,t(\alpha)}\alpha$, then $A$, and actually every $A^\ell$ with $\ell>0$, becomes an $R$-$R$-bimodule.

\begin{definition} The \emph{path algebra} of $Q$ is the $\ka$-vector space $\usualRQ$ consisting of all finite linear combinations of paths in $Q$, that is,
\begin{equation}\label{eq:def-R<Q>}
\usualRQ=\underset{\ell=0}{\overset{\infty}{\bigoplus}}A^\ell.
\end{equation}
The \emph{complete path algebra} of $Q$ is the $\ka$-vector space $\completeRQ$ consisting of all possibly infinite linear combinations of paths in $Q$,
that is,
\begin{equation}\label{eq:def-R<<Q>>}
\completeRQ=\underset{\ell=0}{\overset{\infty}{\prod}}A^\ell.
\end{equation}
Both $\usualRQ$ and $\completeRQ$ have their multiplications induced by concatenation of paths (the product of two paths is their concatenation if they can be concatenated, and 0 if they cannot be concatenated). In terms of the homogeneous components in the decomposition \eqref{eq:def-R<<Q>>} (resp. \eqref{eq:def-R<Q>}), the multiplication of two elements of $\completeRQ$ (resp. $\usualRQ$) resembles the multiplication of formal power series (resp. polynomials). That is, if we have $u=\sum_{\ell\geq 0}u^{(\ell)}$ and $v=\sum_{\ell\geq0}v^{(\ell)}$, with $u^{(\ell)},v^{(\ell)}\in A^\ell$ for every $\ell\geq 0$, then
$$
uv=\sum_{\ell\geq 0}\sum_{\ell_1+\ell_2=\ell}u^{(\ell_1)}v^{(\ell_2)}.
$$
(In this equality, the right-hand side is a well-defined element of $\completeRQ$: for $\ell\geq 0$ fixed, we have $\sum_{\ell_1+\ell_2=\ell}u^{(\ell_1)}v^{(\ell_2)}=\sum_{k=0}^\ell u^{(k)}v^{(k-\ell)}$.)
\end{definition}

Note that $\usualRQ$ is a $\ka$-subalgebra of $\completeRQ$. Actually, $\usualRQ$ is dense in $\completeRQ$ under the \emph{$\maxid$-adic topology} of $\completeRQ$. The fundamental system of open neighborhoods of this topology around $0$ is given by the powers of the two-sided ideal $\maxid$ of $\completeRQ$ generated by the arrows of $Q$.

We are ready to describe the three operations this survey article is about: quiver mutations, mutations of quivers with potentials, and flips of surface triangulations.

\subsection{Quiver mutations}\label{subsec:quiver-mutations}

\begin{definition} Let $Q$ be a quiver.
An \emph{$\ell$-cycle} on $Q$ is a path $\alpha_1\alpha_2\ldots \alpha_\ell$, with  $\ell>0$, such that $h(\alpha_1)=t(\alpha_\ell)$. A quiver is \emph{2-acyclic} if it does not have 2-cycles.
\end{definition}

Central in the definition of cluster algebras is the notion of \emph{quiver mutation}. This is a combinatorial operation on 2-acyclic quivers that can be described as an elementary 3-step procedure as follows. Start with a 2-acyclic quiver $Q$ and a vertex $i$ of $Q$.
\begin{itemize}
\item[(Step 1)] Every time we have an arrow $\alpha:j\to i$ and an arrow $\beta:i\to k$ in $Q$, add an arrow $[\beta\alpha]:j\to k$;
\item[(Step 2)] replace each arrow $\gamma$ incident to $i$ with an arrow $\gamma^*$ going in the opposite direction;
\item[(Step 3)] delete 2-cycles one by one (2-cycles may have been created when applying Step 1).
\end{itemize}
The result is a 2-acyclic quiver $\mu_i(Q)$, called the \emph{mutation of $Q$ with respect to $i$}. See Figure \ref{Fig:example_quiver_mutation} for an example.
        \begin{figure}[!h]
                \centering
                \includegraphics[scale=.65]{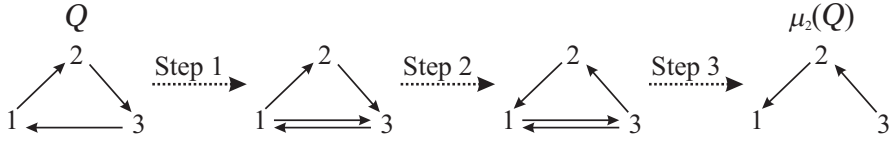}\caption{The three steps of quiver mutation. Here we are applying $\mu_2$.}\label{Fig:example_quiver_mutation}
        \end{figure}

\subsection{Mutations of quivers with potentials}\label{subsec:QP-mutations}

In a representation-theoretic approach to cluster algebras, Derksen-Weyman-Zelevinsky developed in \cite{DWZ1} a \emph{mutation theory of quivers with potentials}, which lifts quiver mutation from the combinatorial to the algebraic level.

\begin{definition} Let $Q$ be a quiver. An element $S$ of $\completeRQ$ is called a \emph{potential} if it is a possibly infinite linear combination of cycles of $Q$, with the property that no two different cycles appearing in $S$ with non-zero coefficient can be obtained from each other by rotation. If $S$ is a potential on $Q$, we say that the pair $(Q,S)$ is a \emph{quiver with potential}, or simply a \emph{QP}.
\end{definition}

Quiver mutation is lifted to the algebraic level of QPs by providing lifts of the three steps described in Subsection \ref{subsec:quiver-mutations}. Among the three steps, the one that turns out to be the hardest to lift is Step 3: one needs an algebraic procedure to delete 2-cycles algebraically. The procedure is provided by a technical result (Theorem \ref{thm:splittingthm} below) that requires some preparation.

\begin{definition}\label{def:QP-stuff} Let $Q$ and $Q'$ be quivers with the same vertex set $Q_0=Q_0'$.
\begin{enumerate}
\item Two potentials $S$ and $S'$ on $Q$ are \emph{cyclically-equivalent} if $S-S'$
lies in the closure of the vector subspace of $\completeRQ$ spanned by all the elements of the form $\alpha_1\ldots \alpha_\ell-\alpha_2\ldots \alpha_\ell\alpha_1$ with $\alpha_1\ldots
\alpha_\ell$ a cycle of positive length.
\item We say that two QPs $(Q,S)$ and $(Q',S')$ are \emph{right-equivalent} if there exists a \emph{right-equivalence} between them, that is, a $\ka$-algebra isomorphism $\varphi:\completeRQ\rightarrow \complete{Q'}$ satisfying $\varphi(e_i)=e_i$ for all $i\in Q_0=Q_0'$, and such that $\varphi(S)$ is cyclically-equivalent to $S'$.
\item For each arrow $\alpha\in Q_1$ and each cycle $\alpha_1\ldots \alpha_\ell$ in $Q$ we define the \emph{cyclic derivative}
$$
\partial_\alpha(\alpha_1\ldots \alpha_\ell)=\underset{k=1}{\overset{\ell}{\sum}}\delta_{\alpha,\alpha_k}\alpha_{k+1}\ldots \alpha_\ell\alpha_1\ldots \alpha_{k-1},
$$
and extend $\partial_\alpha$ by linearity and continuity so that $\partial_\alpha(S)$ is defined for every potential $S$.
\item The \emph{Jacobian ideal} $J(S)$ is the topological closure of the two-sided ideal of $\completeRQ$ generated by $\{\partial_\alpha(S)\suchthat \alpha\in Q_1\}$, and the
\emph{Jacobian algebra} $\jacobqs$ is the quotient algebra $\completeRQ/J(S)$.
\item A QP $(Q,S)$ is \emph{trivial} if $S\in A^2$ and $\{\partial_\alpha(S)\suchthat \alpha\in Q_1\}$ spans $A$ as a $\ka$-vector space.
\item A QP $(Q,S)$ is \emph{reduced} if the degree-$2$ component of $S$ is $0$, that is,
if the expression of $S$ involves no $2$-cycles.
\item The \emph{direct sum} $Q\oplus Q'$ is the quiver whose vertex set is $Q_0=Q_0'$ and whose arrow set is the disjoint union $Q_1\sqcup Q_1'$, with the tail and head functions defined in the obvious way.
\item The \emph{direct sum} of two QPs $(Q,S)$ and $(Q',S')$ is the QP $(Q,S)\oplus(Q',S')=(Q\oplus Q',S+S')$.
\end{enumerate}
\end{definition}

\begin{proposition}
\cite{DWZ1} If $\varphi:\completeRQ\rightarrow \complete{Q'}$ is a right-equivalence between $(Q,S)$ and $(Q',S')$, then $\varphi$ sends $J(S)$ onto $J(S')$ and therefore induces an algebra isomorphism $\jacobqs\rightarrow\mathcal{P}(Q',S')$.
\end{proposition}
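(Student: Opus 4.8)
The plan is to show directly that $\varphi(J(S)) = J(S')$, after which the induced isomorphism $\jacobqs \to \mathcal{P}(Q',S')$ is automatic by passing to quotients. Since $\varphi$ is a topological $\ka$-algebra isomorphism fixing the idempotents $e_i$, it suffices to prove that $\varphi$ carries the (topological closure of the) two-sided ideal generated by the cyclic derivatives $\partial_\alpha(S)$, $\alpha \in Q_1$, onto the corresponding ideal generated by the $\partial_\beta(S')$, $\beta \in Q_1'$. Because $\varphi$ is continuous and is an algebra homomorphism, it maps the two-sided ideal generated by a set $X$ into the two-sided ideal generated by $\varphi(X)$, and likewise for topological closures; applying the same reasoning to $\varphi^{-1}$ shows that it is enough to prove the single containment $\varphi(J(S)) \subseteq J(S')$, i.e. that each $\varphi(\partial_\alpha(S))$ lies in $J(S')$.

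The key computational input is a ``chain rule'' for cyclic derivatives together with the observation that cyclically-equivalent potentials have the same Jacobian ideal. First I would record that if $T$ and $T'$ are cyclically equivalent potentials on a quiver $Q'$, then $J(T) = J(T')$: indeed $\partial_\beta(\alpha_1\cdots\alpha_\ell - \alpha_2\cdots\alpha_\ell\alpha_1)$ is, up to reindexing, a sum of commutators of paths, hence lies in the ideal generated by the $\partial_\gamma$ of the cycle; one extends this by linearity and continuity. Since $\varphi(S)$ is cyclically equivalent to $S'$, we have $J(\varphi(S)) = J(S')$, so it remains to show $\varphi(J(S)) = J(\varphi(S))$, i.e. we may as well assume $S' = \varphi(S)$ and must show that the ideal generated by $\{\varphi(\partial_\alpha(S))\}_{\alpha\in Q_1}$ equals the ideal generated by $\{\partial_\beta(\varphi(S))\}_{\beta\in Q_1'}$. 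For this one writes $\varphi(\alpha) = \sum_{\beta} c_{\alpha\beta}(x)\,\beta$ where each $c_{\alpha\beta}$ is a (possibly infinite) linear combination of paths in $Q'$, with the constraint $e_{h(\alpha)}\varphi(\alpha) = \varphi(\alpha) = \varphi(\alpha)e_{t(\alpha)}$ dictated by $\varphi(e_i)=e_i$. Expanding $\varphi(S)$ and differentiating term by term, one obtains, for each $\beta \in Q_1'$, an expression for $\partial_\beta(\varphi(S))$ as a (convergent) sum
\[
\partial_\beta(\varphi(S)) \ = \ \sum_{\alpha\in Q_1} u_{\beta\alpha}\,\varphi(\partial_\alpha(S))\,v_{\beta\alpha} \ + \ (\text{terms already in the ideal generated by the }\varphi(\partial_\alpha(S))),
\]
with $u_{\beta\alpha}, v_{\beta\alpha} \in \complete{Q'}$; symmetrically each $\varphi(\partial_\alpha(S))$ is an element of the ideal generated by the $\partial_\beta(\varphi(S))$. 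Care is needed because $\partial_\beta$ is not a derivation, but the cyclic derivative does satisfy a Leibniz-type identity modulo cyclic permutations, and this is exactly what makes the cross terms land in the correct two-sided ideal.

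The main obstacle is precisely this cyclic chain-rule bookkeeping: when one substitutes $\varphi(\alpha) = \sum_\beta c_{\alpha\beta}\beta$ into a cycle $\alpha_1\cdots\alpha_\ell$ and then applies $\partial_\beta$, the arrow $\beta$ can be ``extracted'' from any of the path factors $c_{\alpha_k \beta'}$, and one must check that all the resulting paths, after the forced cyclic rotation in the definition of $\partial_\beta$, reassemble into expressions of the form (prefix)$\cdot \varphi(\partial_{\alpha_k}(\text{cycle})) \cdot$(suffix) plus genuinely cyclic junk that dies in the Jacobian ideal. Convergence in the $\maxid$-adic topology must be tracked throughout (so that infinite potentials and infinite $\varphi(\alpha)$ cause no trouble), but since $\varphi$ preserves the $\maxid$-adic filtration up to a shift and all the manipulations are continuous, this is routine. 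Once the two containments of ideals are established, taking the closure and then the quotient yields the induced algebra isomorphism $\jacobqs \to \mathcal{P}(Q',S')$, completing the proof.
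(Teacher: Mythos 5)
The paper does not actually prove this proposition; it simply cites \cite{DWZ1}, so there is no in-paper argument to compare against. Your proposal, however, follows essentially the same route as the original DWZ proof: reduce to showing $\varphi(J(S)) = J(\varphi(S))$ via a noncommutative chain rule for cyclic derivatives, combined with the observation about cyclically equivalent potentials. That is the right strategy and it works.

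Two small imprecisions are worth flagging, both in the direction of your argument being \emph{too} cautious. First, for cyclically equivalent potentials the cyclic derivatives are not merely congruent modulo the Jacobian ideal: they are literally equal. Indeed, directly from the defining formula, $\partial_\beta(\alpha_1\cdots\alpha_\ell) = \partial_\beta(\alpha_2\cdots\alpha_\ell\alpha_1)$, because the cyclic derivative reads once around the cycle from each occurrence of $\beta$ and hence is insensitive to where one starts the cycle; extending by linearity and continuity, $J(T)=J(T')$ follows from the stronger fact $\partial_\beta T = \partial_\beta T'$ for all $\beta$. Second, the ``cyclic chain rule'' you worry about is in fact an exact identity, with no extra ``junk'' terms to absorb into the ideal: for a cycle $c=\alpha_1\cdots\alpha_\ell$ and $\beta\in Q_1'$, every occurrence of $\beta$ in a monomial of $\varphi(c)=\varphi(\alpha_1)\cdots\varphi(\alpha_\ell)$ sits inside exactly one factor $\varphi(\alpha_k)$, and the rotation in the definition of $\partial_\beta$ produces precisely a suffix of that factor, followed by $\varphi(\alpha_{k+1}\cdots\alpha_\ell\alpha_1\cdots\alpha_{k-1})$, followed by the complementary prefix. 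Grouping the $k$ with $\alpha_k=\alpha$ and summing gives
\[
\partial_\beta(\varphi(c)) \;=\; \sum_{\alpha\in Q_1}\ \sum_{(p,j)} \bigl(\text{suffix}_{p,j}\bigr)\,\varphi\bigl(\partial_\alpha(c)\bigr)\,\bigl(\text{prefix}_{p,j}\bigr),
\]
where the inner sum runs over monomials $p$ of $\varphi(\alpha)$ and positions $j$ of $\beta$ in $p$. This exhibits $\partial_\beta(\varphi(S))$ as lying in the closed two-sided ideal generated by the $\varphi(\partial_\alpha(S))$, with no additional cyclic correction terms; the symmetric containment comes from running the argument with $\varphi^{-1}$. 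Convergence is indeed routine since $\varphi(\maxid)\subseteq\maxid'$. So the ``main obstacle'' you identify is in fact smoother than you feared, and the rest of your outline is correct.
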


\begin{theorem}
\cite{DWZ1}\label{thm:splittingthm} For every QP $(Q,S)$ there exist a trivial QP
$\qstriv{Q}{S}$ and a reduced QP $\qsred{Q}{S}$ such that $(Q,S)$ is right-equivalent to the direct sum $\qstriv{Q}{S}\oplus\qsred{Q}{S}$. 
The right-equivalence class of each of the QPs $\qstriv{Q}{S}$ and $\qsred{Q}{S}$ is determined by the right-equivalence class of $(Q,S)$.
\end{theorem}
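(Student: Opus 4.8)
The plan is to produce the trivial part and the reduced part by an inductive "change of arrows" argument that kills the degree-$2$ component of $S$ step by step, in the $\maxid$-adic topology. Write $S = S^{(2)} + S^{(\geq 3)}$, where $S^{(2)} \in A^2$ is the degree-$2$ component and $S^{(\geq 3)}$ collects the cycles of length $\geq 3$. The idea is that the $2$-cycles in $S^{(2)}$ pair up certain arrows; after a linear change of basis of $A$ (compatible with the bimodule structure over $R$, i.e. respecting tails and heads) one may assume $S^{(2)} = \sum_{k=1}^{N} a_k b_k$ for arrows $a_k, b_k$ with the $a_k$'s, $b_k$'s all distinct, where $2N$ counts the arrows involved in $2$-cycles. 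The arrows $a_1,b_1,\dots,a_N,b_N$ together with $S^{(2)}$ will form (the arrow set and potential of) a trivial QP once we split off the remaining arrows, which will carry the reduced part.

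First I would establish the key lemma: if $S$ and $S'$ differ by a right-equivalence, and $S^{(2)} = \sum a_k b_k$ in the normalized form above, then one can find a right-equivalence $\varphi$ (the identity on $R$) such that $\varphi(S)$ has the form $S^{(2)} + T$ with $T$ a potential involving \emph{only} the arrows that do not occur in $S^{(2)}$. The construction of $\varphi$ is iterative: using the cyclic derivatives $\partial_{a_k}(S^{(2)}) = b_k$ and $\partial_{b_k}(S^{(2)}) = a_k$, one replaces $b_k$ by $b_k + (\text{higher-order correction})$ so as to cancel, degree by degree, every term of $\varphi(S)$ in which $a_k$ or $b_k$ appears together with a cycle of length $\geq 1$ of the complementary arrows; the corrections live in strictly higher $\maxid$-adic degree at each stage, so the infinite composition converges in $\completeRQ$ and is a genuine $\ka$-algebra automorphism. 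Having achieved $\varphi(S) = S^{(2)} + T$, I set $Q_{\operatorname{triv}}$ to be the quiver on $Q_0$ with arrows $\{a_k, b_k\}$, $S_{\operatorname{triv}} = S^{(2)}$, and $Q_{\operatorname{red}}$ the quiver on $Q_0$ with the complementary arrows, $S_{\operatorname{red}} = T$. Then $(Q,S)$ is right-equivalent to $\qstriv{Q}{S}\oplus\qsred{Q}{S}$; $\qstriv{Q}{S}$ is trivial because $\{\partial_{a_k}(S^{(2)}), \partial_{b_k}(S^{(2)})\} = \{b_k, a_k\}$ spans its arrow span; and $\qsred{Q}{S}$ is reduced because $T$ has no degree-$2$ component by construction.

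For the uniqueness statement, suppose $(Q,S)$ is right-equivalent both to $(Q_1,S_1)\oplus(Q_1',S_1')$ and to $(Q_2,S_2)\oplus(Q_2',S_2')$ with the first summands trivial and the second reduced. Composing, it suffices to show: if $\psi$ is a right-equivalence between two such direct-sum decompositions, then it restricts (after adjustment) to right-equivalences of the trivial parts and of the reduced parts separately. Here one looks at $\psi$ on the degree-$1$ level modulo $\maxid^2$: the span of the trivial-part arrows is intrinsically characterized as (a complement to) the image of $R + J(S)\cap\maxid$ in $\maxid/\maxid^2$, or more concretely via the span of the $\partial_\alpha$ of the degree-$2$ component of the (cyclically reduced) potential, which a right-equivalence preserves; this forces $\dim Q_{1,\operatorname{triv}}$ and the right-equivalence class of the trivial QP to be invariants, and then a second application of the iterative argument above shows the reduced parts are right-equivalent as well.

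The main obstacle is the convergence and well-definedness of the iteratively constructed automorphism $\varphi$ in the $\maxid$-adic topology: one must verify carefully that the successive corrections raise $\maxid$-adic order, that the (infinite) composition of the elementary substitutions converges to a continuous $\ka$-algebra endomorphism fixing $R$, and that this endomorphism is in fact invertible (which follows because it is the identity modulo $\maxid^2$, so its effect on $\maxid/\maxid^2$ is the identity and a standard completeness argument gives an inverse). Everything else — the normalization of $S^{(2)}$, the computation of cyclic derivatives, and the verification of triviality and reducedness of the two summands — is routine bookkeeping.
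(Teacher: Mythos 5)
Your proposal follows essentially the same route as the paper (i.e., the Derksen--Weyman--Zelevinsky argument it summarizes): for existence, normalize the degree-$2$ part to $\sum a_kb_k$ by a linear change of arrows and then kill the interaction terms by an infinite composition of substitutions whose corrections rise in $\maxid$-adic order, using completeness of $\completeRQ$ to make the composition converge; for uniqueness, identify the span of the trivial-part arrows as an invariant visible in $\maxid/\maxid^2$ and then match the reduced parts. Two small cautions: in your intrinsic characterization, the image of $J(S)$ in $\maxid/\maxid^2$ \emph{is} the span of the trivial-part arrows, not a complement to it (your alternative description via the cyclic derivatives of the degree-$2$ component is the correct one); and the ``after adjustment'' parenthetical in your uniqueness sketch conceals the bulk of the work, since --- as the paper stresses --- a right-equivalence between two direct-sum decompositions need not restrict to one between the reduced subquivers, so showing that it can be corrected to do so is the technically long part of the DWZ argument.
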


\begin{proof}[On the proof] The proof of the second statement (uniqueness of $\qstriv{Q}{S}$ and $\qsred{Q}{S}$ up to right-equivalence) is rather long and relies on a series of technical (albeit elementary) preliminary results. One of the problems is that, for an arbitrary right-equivalence $\varphi:\complete{Q'\oplus C'}\rightarrow\complete{Q''\oplus C''}$ between $(Q',W')\oplus(C',T')$ and $(Q'',W'')\oplus(C'',T'')$, with $(Q',W'),(Q'',W'')$ reduced QPs and $(C',T'),(C'',T'')$ trivial QPs, it is not necessarily true that $\varphi$ restricts to an isomorphism $\complete{Q'}\rightarrow\complete{Q''}$, which means that the restriction of $\varphi$ to $\complete{Q'}$ is not necessarily a right-equivalence between $(Q',W')$ and $(Q'',W'')$.

Let us discuss Derksen-Weyman-Zelevinsky's proof of existence of $\qstriv{Q}{S}$ and $\qsred{Q}{S}$. Using basic linear algebra, it is easy to show that there exists an $R$-$R$-bimodule automorphism $f$ of $A$ such that the algebra automorphism $\psi$ of $\completeRQ$ induced by $f$ sends $S$ to a potential cyclically equivalent to
\begin{equation}
W=\sum_{k=1}^Na_Nb_N+W^{(\geq 3)}
\end{equation}
for some $N\geq 0$, some set $\{a_1,b_1,\ldots,a_N,b_N\}$ of $2N$ distinct arrows of $Q$ such that each $a_kb_k$ is a 2-cycle, and some potential $W^{(\geq 3)}\in\maxid^3$. Up to cyclical equivalence one can actually assume that
\begin{equation}\label{eq:pot-normal-form}
W=\left(\sum_{k=1}^Na_Nb_N+a_ku_k+b_kv_k\right)+W'
\end{equation}
for some $u_k$ and $v_k$ belonging to $\maxid^\ell$ for some $\ell\geq 2$, and some potential $W'\in\maxid^3$ that does not involve any of the arrows $a_1,b_1,\ldots,a_N,b_N$. If $u_k=v_k=0$ for all $k=1,\ldots,N$, then $(Q,W)$ is already reduced and the existence of $\qstriv{Q}{S}$ and $\qsred{Q}{S}$ is established. Otherwise, one uses \eqref{eq:pot-normal-form} to define an algebra automorphism $f$ of $\completeRQ$ by setting
\begin{equation}\nonumber
f(a_k)=a_k-v_k, \ \ \ f(b_k)=b_k-u_k, \ \ \ \text{for $k=1,\ldots,N$},
\end{equation}
and $f(c)=c$ for every arrow $c\in Q_1\setminus\{a_1,b_1,\ldots,a_N,b_N\}$. A little algebraic manipulation shows that $f(W)$ is cyclically equivalent to a potential of the form \eqref{eq:pot-normal-form}, but with the corresponding factors $u_k$ and $v_k$ belonging to a power $\maxid^{\ell'}$ with $\ell'>\ell$. This property and the fact that the \emph{depth} of $f$ is at least $\ell-1$ allow a recursive construction of a sequence $(f_n)_{n>0}$ of algebra automorphisms of $\complete{Q}$ such that the limit $\varphi=\lim_{n\to\infty}f_nf_{n-1}\ldots f_2f_1$ is a well-defined algebra automorphism of $\complete{Q}$ that sends $W$ to a potential cyclically-equivalent to a potential $\widetilde{W}$ of the form \eqref{eq:pot-normal-form} with all factors $u_k$ and $v_k$ equal to zero\footnote{This is the reason why we work with the complete path algebra $\complete{Q}$ rather than with the path algebra $\usual{Q}$: the only way to ensure convergence of this limit process for arbitrary $(Q,S)$ is to work with the complete path algebra.}. Setting $Q_{\operatorname{red}}$ (resp. $Q_{\operatorname{triv}}$) to be the subquiver of $Q$ whose arrow set is $Q_1\setminus\{a_1,b_1,\ldots,a_N,b_N\}$ (resp. $\{a_1,b_1,\ldots,a_N,b_N\}$), and $S_{\operatorname{red}}=\widetilde{W}-\sum_{k=1}^Na_kb_k$ (resp. $S_{\operatorname{triv}}=\sum_{k=1}^Na_kb_k$), we see that $\qsred{Q}{S}$ and $\qstriv{Q}{S}$ are a reduced and a trivial QP, respectively, and the composition $\varphi\psi$ is a right-equivalence between $(Q,S)$ and $\qsred{Q}{S}\oplus\qstriv{Q}{S}$.
\end{proof}

\begin{definition} In the situation of Theorem \ref{thm:splittingthm}, the QPs $\qsred{Q}{S}$ and $\qstriv{Q}{S}$ are called, respectively, the \emph{reduced part} and the \emph{trivial
part} of $(Q,S)$.
\end{definition}

We now turn to the definition of mutation of a QP. Let $(Q,S)$ be a QP on the vertex set $Q_0$ and let $i\in Q_0$. Assume that $Q$ is 2-acyclic. Let $\widetilde{\mu}_i(Q)$ be the quiver obtained right after applying the first two steps of quiver mutation, but before applying Step 3. Replacing $S$ if necessary with a cyclically equivalent potential, we assume that every cycle appearing in the expression of $S$ starts at a vertex different from $i$. Then we define $[S]$ to be the potential on $\widetilde{\mu}_i(Q)$ obtained from
$S$ by replacing each length-2 path $\beta\alpha$ of $Q$ such that $h(\alpha)=i=t(\beta)$, with the arrow $[\beta\alpha]$ of $\widetilde{\mu}_i(Q)$. Also, we define $\Delta_i(Q)=\sum \alpha^*\beta^*[\beta\alpha]$,
where the sum runs over all length-2 paths $\beta\alpha$ of $Q$ such that $h(\alpha)=i=t(\beta)$. Finally, we set
$\widetilde{\mu}_i(S)=[S]+\Delta_i(Q)$, which clearly is a potential on $\widetilde{\mu}_i(Q)$.

\begin{definition}\cite{DWZ1}\label{def:QP-mutation} Under the assumptions and notation just stated, we define the 
\emph{mutation} $\mu_i(Q,S)$ of $(Q,S)$ with respect to $i$ to be the reduced part of the QP
$\widetilde{\mu}_i(Q,S)=(\widetilde{\mu}_i(Q),\widetilde{\mu}_i(S))$.
\end{definition}

``Unfortunately", given a QP $(Q,S)$ with $Q$ 2-acyclic, the underlying quiver of the mutated QP $\mu_i(Q,S)$ is not necessarily 2-acyclic, its 2-acyclicity depends heavily on the potential $S$.

\begin{definition}\cite{DWZ1}
A QP $(Q,S)$ is \emph{non-degenerate} if $Q$ is 2-acyclic and the underlying quiver of the QP obtained after any possible sequence of QP-mutations is 2-acyclic.
\end{definition}

\begin{theorem}\cite{DWZ1}\label{thm:propertiesofQP-mutations} The following hold if $\ka$ is the ground field:\begin{enumerate}\item Mutations of QPs are well defined up to right-equivalence.
\item Mutations of QPs are involutive up to right-equivalence.
\item Every 2-acyclic quiver $Q$ admits a non-degenerate potential on it.
\item Finite-dimensionality of Jacobian algebras is invariant under QP-mutations.
\end{enumerate}
\end{theorem}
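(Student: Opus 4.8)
My plan is to follow the argument of Derksen--Weyman--Zelevinsky \cite{DWZ1}, organized around a single technical lemma on which the first three assertions all rest. First I would establish a \emph{lifting lemma}: any right-equivalence between two QPs forces their underlying quivers to coincide (an idempotent-preserving isomorphism of complete path algebras of loop-free quivers must preserve $\maxid$, hence induces an isomorphism of the arrow bimodules $\maxid/\maxid^2$), and, after composing it with a suitable automorphism, one may assume it fixes every arrow incident to $i$ and is unitriangular on the remaining arrows; the prescriptions $\gamma\mapsto\gamma^*$ on the arrows at $i$ and $[\beta\alpha]\mapsto[\varphi(\beta)\varphi(\alpha)]$ on the newly created arrows then extend it to a right-equivalence between the premutations $\widetilde{\mu}_i(Q,S)$ and $\widetilde{\mu}_i(Q,S')$. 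Given this, assertion (1) is immediate: if $(Q,S)\sim(Q,S')$ then $\widetilde{\mu}_i(Q,S)\sim\widetilde{\mu}_i(Q,S')$ by the lemma, and by the Splitting Theorem \ref{thm:splittingthm} their reduced parts $\mu_i(Q,S)$ and $\mu_i(Q,S')$ are right-equivalent. I would also record two easy facts for later use: $\widetilde{\mu}_i(P\oplus(C,U))=\widetilde{\mu}_i(P)\oplus(C,U)$ whenever $C$ has no arrow incident to $i$, and taking reduced parts absorbs trivial direct summands (by the uniqueness part of Theorem \ref{thm:splittingthm}).

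For assertion (2) I would first note that a $2$-acyclic QP is automatically reduced (no potential can involve a $2$-cycle absent from the quiver), so I may assume $(Q,S)$ is reduced with $Q$ and $\mu_i(Q)$ both $2$-acyclic. The core is the explicit computation of $\widetilde{\mu}_i(\widetilde{\mu}_i(Q,S))$: reversing the arrows at $i$ twice returns $Q$ (identifying $\gamma^{**}$ with $\gamma$ up to sign); each arrow $[\beta\alpha]$ produced by the first premutation pairs with an arrow $[\alpha^*\beta^*]$ produced by the second into a $2$-cycle; and the term $\Delta_i(Q)$ introduced at the first step, transformed by the second step, supplies precisely the change of variables that returns each $[\beta\alpha]$ to $\beta\alpha$ inside $[S]$, so that after splitting off the $2$-cycles one is left with $(Q,S)$. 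Thus $\widetilde{\mu}_i(\widetilde{\mu}_i(Q,S))$ is right-equivalent to $(Q,S)$ plus a trivial QP; combining this with the lifting lemma, the two recorded facts, and Theorem \ref{thm:splittingthm}, taking reduced parts twice yields $\mu_i(\mu_i(Q,S))\sim(Q,S)$.

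For assertion (4) I would use $\widetilde{\mu}_i(Q,S)\sim\mu_i(Q,S)\oplus(\text{trivial})$ together with the observation that a trivial direct summand does not affect the Jacobian algebra at all (the arrows of a trivial QP occur among its own Jacobian relations and those relations involve no other arrows, so $\mathcal{P}\bigl((Q,S)\oplus(C,U)\bigr)\cong\jacobqs$), reducing the problem to comparing $\jacobqs$ with $\mathcal{P}(\widetilde{\mu}_i(Q,S))$. I would then invoke the criterion that a Jacobian algebra is finite-dimensional exactly when $\maxid^N\subseteq J(S)$ for some $N$, i.e. when every sufficiently long path is Jacobian-trivial, and analyze how the relations $\partial_\alpha(\widetilde{\mu}_i(S))$ encode those of $(Q,S)$ — in particular $\alpha^*\beta^*\equiv-\partial_{[\beta\alpha]}[S]$ and $\sum_\beta\beta^*[\beta\alpha]\equiv0$, which allow one to rewrite occurrences of the reversed arrows $\gamma^*$ at $i$ in terms of longer paths avoiding $i$ — to conclude that the bound $\maxid^N\subseteq J$ on one side forces such a bound on the other.

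For assertion (3) I would argue generically, using that the ground field $\ka$ is uncountable. For a fixed $2$-acyclic $Q$ and a finite sequence $\mathbf{i}=(i_1,\dots,i_\ell)$ of vertices, whether $\mu_{i_\ell}\cdots\mu_{i_1}$ keeps the underlying quiver $2$-acyclic at every stage depends only on the coefficients of $S$ on cyclic paths of bounded length, and so defines the complement of a Zariski-closed subset $Z_{\mathbf{i}}$ in a finite-dimensional affine space of truncated potentials. The crucial claim is that $Z_{\mathbf{i}}$ is a \emph{proper} subvariety, which I would prove by induction on $\ell$: the base case is that a sufficiently generic potential on $Q$ mutates at $i_1$ to a $2$-acyclic QP (the $2$-cycles created by $\widetilde{\mu}_{i_1}$ get split off as long as $S$ contains the corresponding longer cycles through $i_1$), and the inductive step uses assertions (1) and (2), which make $\mu_{i_{\ell-1}}\cdots\mu_{i_1}$ a bijection on right-equivalence classes, so that the locus of potentials good for $(i_1,\dots,i_{\ell-1})$ is not swept into the bad locus of the last mutation. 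Since over an uncountable field no countable union of proper closed subvarieties of an affine space is the whole space, there is a potential $S$ outside every $Z_{\mathbf{i}}$, and any such $S$ is non-degenerate. The main obstacle in all of this is the explicit involutivity computation of assertion (2) — keeping track of the doubled arrows, the signs, and the interaction of $\Delta_i(Q)$ with $[S]$ — since the lifting lemma and assertion (3) ultimately rest on it as well.
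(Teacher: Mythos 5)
The paper states Theorem~\ref{thm:propertiesofQP-mutations} as a citation to \cite{DWZ1} and does not reproduce a proof, so there is no in-paper argument to compare against. Your sketch faithfully reconstructs the Derksen--Weyman--Zelevinsky strategy — the lifting lemma extending a right-equivalence to the premutations, reduction to the Splitting Theorem for (1), the explicit computation of $\widetilde{\mu}_i\widetilde{\mu}_i$ with the trivial summand on the $[\beta\alpha]$/$[\alpha^*\beta^*]$ pairs for (2), the $\maxid^N\subseteq J(S)$ criterion together with the relations $\alpha^*\beta^*\equiv-\partial_{[\beta\alpha]}[S]$ for (4), and the countable-union-of-proper-Zariski-closed-loci argument over the uncountable field $\ka$ for (3) — and is correct in all its essential points.
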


\begin{remark} Theorem \ref{thm:propertiesofQP-mutations} still holds for ground fields different from $\ka$. Indeed, parts (1), (2) and (4) hold over any ground field, while part (3) holds over any uncountable ground field.
\end{remark}

\subsection{Flips of triangulations}\label{subsec:triangulations}

\begin{definition}
A \emph{surface with marked points}, or simply a \emph{surface}, is a pair $\surf$, where $\Sigma$ is a compact connected oriented Riemann surface with (possibly empty) boundary, and $\marked$ is a non-empty finite subset of $\Sigma$ containing at least one point from each connected component of the boundary of $\Sigma$. We refer to the elements of $\marked$ as \emph{marked points}. The marked points that lie in the interior of $\Sigma$ are called \emph{punctures}, and the set of punctures of $\surf$ is denoted $\punct$.
\end{definition}

We think of $\marked$ as a prescribed set of vertices for triangulations of $\Sigma$. More formally:

\begin{definition} Let $\surf$ be a surface with marked points.
\begin{enumerate}\item An \emph{arc} on $\surf$, is a curve $i$ on $\Sigma$ such that:
\begin{itemize}
\item the endpoints of $i$ belong to $\marked$;
\item $i$ does not intersect itself, except that its endpoints may coincide;
\item the points in $i$ that are not endpoints do not belong to $\marked$ nor to the boundary of $\Sigma$;
\item $i$ does not cut out an unpunctured monogon nor an unpunctured digon.
\end{itemize}
\item Two arcs $i_1$ and $i_2$ are \emph{isotopic relative to $\marked$} if there exists a continuous function $H:[0,1]\times\Sigma\rightarrow\Sigma$ such that
    \begin{itemize}
    \item[(a)] $H(0,x)=x$ for all $x\in\Sigma$;
    \item[(b)] $H(1,i_1)=i_2$;
    \item[(c)] $H(t,m)=m$ for all $t\in I$ and all $m\in\marked$;
    \item[(d)] for every $t\in I$, the function $H_t:\Sigma\to\Sigma$ given by $x\mapsto H(t,x)$ is a homeomorphism.
    \end{itemize}
    Arcs will be considered up to isotopy relative to $\marked$, parametrization, and orientation.
\item Two arcs are \emph{compatible} if there are arcs in their respective isotopy classes that, except possibly for their endpoints, do not intersect.
\item An \emph{ideal triangulation} of $\surf$ is any maximal collection $\tau$ of pairwise compatible arcs.
\end{enumerate}
\end{definition}

\begin{remark} The adjective \emph{ideal} in the term \emph{ideal triangulation} comes from the connection with Teichm\"{u}ller theory, see \cite{FT}.
\end{remark}

If $\tau$ is any ideal triangulation, then, replacing if necessary each arc in $\tau$ with an isotopic one (relative to $\marked$), one can assume that any two arcs in $\tau$ intersect at most at their endpoints. This fact is less trivial than it may seem at first glance, see for example \cite{FHS}.

Any ideal triangulation $\tau$ of $\surf$ splits $\Sigma$ into \emph{triangles}. If $\surf$ has punctures, some of the triangles of $\tau$ may be \emph{self-folded}, see Figure \ref{Fig:triang_with_sf_triangle}. A self-folded triangle always contains a \emph{folded side}.
        \begin{figure}[!h]
                \centering
                \includegraphics[scale=.4]{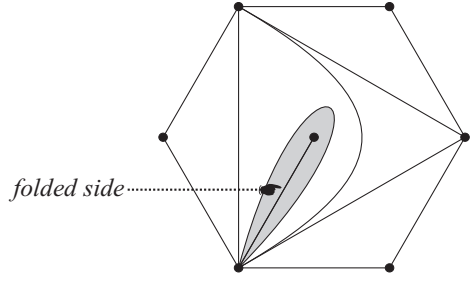}\caption{Ideal triangulation with a self-folded triangle (the self-folded triangle has been highlighted).
                }\label{Fig:triang_with_sf_triangle}
        \end{figure}
If $i\in\tau$ is an arc which is not the folded side of a self-folded triangle, then there exists a unique arc $j\neq i$ on $\surf$, such that $\sigma=(\tau\setminus\{i\})\cup \{j\}$ is an ideal triangulation of $\surf$. We say that $\sigma$ is obtained from $\tau$ by the \emph{flip} of the arc $i$. See Figure \ref{Fig:triangs_rel_by_flip}. Intuitively speaking, flipping an arc of a triangulation corresponds to the operation that replaces a diagonal of a square with the other diagonal.
        \begin{figure}[!h]
                \centering
                \includegraphics[scale=.4]{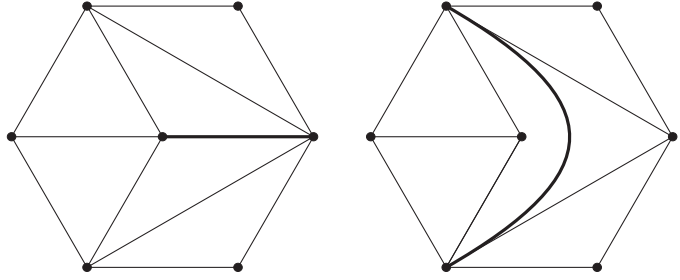}\caption{Two ideal triangulations related by a flip, the arcs involved in the flip have been drawn bolder than the rest.
                }\label{Fig:triangs_rel_by_flip}
        \end{figure}

Note that with the notion of triangulation we have thus far (that of ideal triangulation), it is not possible to flip folded sides of self-folded triangles. In order to be able to flip these, Fomin-Shapiro-Thurston introduced in \cite{FST} the concept of \emph{tagged triangulation}, a notion of triangulation which is more general than the notion of ideal triangulation we have defined above. The combinatorics of flips of tagged triangulations becomes rather subtle at some points, but the following does hold:

\begin{theorem}\label{thm:flips-of-tagged-triangs} Let $\surf$ be a surface with marked points.
\begin{enumerate}
\item\cite{FST} If $\tau$ is a tagged triangulation of $\surf$, and $i$ is a tagged arc belonging to $\tau$, then there exists a unique tagged arc $j\neq i$ on $\surf$ such that $\sigma=(\tau\setminus\{i\})\cup\{j\}$ is a tagged triangulation of $\surf$. In other words, every tagged arc in a tagged triangulation can be flipped.
\item\cite{Mosher} Every two ideal triangulations can be connected by a sequence of flips of ideal triangulations.
\item\cite{FST} If $\surf$ is not a closed surface with exactly one puncture, then every two tagged triangulations of $\surf$ can be connected by a sequence of flips.
\item\cite{FST} Every quiver $\qtau$ with $\tau$ a tagged triangulation is isomorphic to a quiver of the form $\qsigma$ with $\sigma$ an ideal triangulation.
\end{enumerate}
\end{theorem}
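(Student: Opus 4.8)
\emph{Strategy.} The four assertions split into two groups. Part (2) is a statement purely about ideal triangulations, whereas parts (1), (3) and (4) concern tagged triangulations and I would deduce them from the ideal case through the Fomin--Shapiro--Thurston dictionary. Recall that the group $(\mathbb{Z}/2)^{\punct}$ acts on the set of tagged triangulations of $\surf$ by simultaneously switching, at each puncture in a chosen subset, the tag of every tagged arc incident to it; and that every tagged triangulation lies in the $(\mathbb{Z}/2)^{\punct}$-orbit of the \emph{canonical tagging} $\tau^{\bullet}$ of a unique ideal triangulation $\tau$ --- where $\tau^{\bullet}$ keeps every arc not involved in a self-folded triangle plain, and replaces each self-folded pair (a folded side running from a marked point $a$ to a puncture $p$, together with its enclosing loop at $a$) by the two tagged arcs whose underlying curve is that folded side, one plain at $p$ and one notched at $p$. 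The first step of the plan is to set up this dictionary carefully: the compatibility relation on tagged arcs is designed precisely so that $\tau\mapsto\tau^{\bullet}$ and the $(\mathbb{Z}/2)^{\punct}$-action have these properties, and one records which combinatorial data (number of arcs, local configurations, the associated quiver up to isomorphism) they preserve.

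\emph{Part (2).} For the connectivity of ideal triangulations under flips I would run the classical ``minimal position plus induction on intersection number'' argument. Given ideal triangulations $\tau$ and $\tau'$, isotope them (rel $\marked$) so that their arcs are in pairwise minimal position; if $\tau\neq\tau'$, choose an arc $a'\in\tau'\setminus\tau$, which must cross $\tau$, and locate a crossing that is innermost, in the sense that a segment of $a'$ together with a segment of an arc of $\tau$ cuts out a triangle of $\tau$ meeting $a'$ in no further point. Flipping the appropriate diagonal of that triangle produces a triangulation meeting $a'$, hence $\tau'$, in strictly fewer points; iterating reaches $\tau'$ after finitely many steps. Two points need care: one must ensure the flip performed is legal --- if the candidate diagonal is the folded side of a self-folded triangle, first flip its enclosing loop --- and one must verify that intersection number really drops; this bookkeeping around self-folded triangles is the delicate part, and is where I would lean on Mosher's (or Hatcher's) treatment.

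\emph{Parts (1) and (4).} For existence and uniqueness of the flip of a tagged arc $i\in\tau$ I would reduce, via the dictionary, to a local statement. The tagged arcs of $\tau$ that, together with $i$, bound the two ideal triangles lying on either side of $i$ form a ``generalized quadrilateral'' (at most four sides, possibly with identifications among its vertices, and possibly with a side replaced by a plain/notched pair at a puncture); one checks, over the finite list of such local configurations, that a generalized quadrilateral has exactly two ``diagonals'', one of them $i$. This yields a unique candidate $j$, and legality of $(\tau\setminus\{i\})\cup\{j\}$ is read off the same picture. For global uniqueness, any $j'$ with $(\tau\setminus\{i\})\cup\{j'\}$ a tagged triangulation must be compatible with all of $\tau\setminus\{i\}$, and the local analysis shows only two tagged arcs have that property, so $j'=j$. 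For part (4), given a tagged triangulation $\tau$ write $\tau=\varepsilon\cdot\sigma^{\bullet}$ with $\sigma$ an ideal triangulation and $\varepsilon\in(\mathbb{Z}/2)^{\punct}$; it then suffices to check that switching the tags at a single puncture $p$ leaves the quiver unchanged up to isomorphism --- when $p$ meets two or more tagged arcs the local quiver around $p$ is symmetric under the tag swap, and when $p$ meets a single tagged arc the swap merely replaces the self-folded triangle's contribution to the quiver by an isomorphic one --- so $Q(\tau)$ is isomorphic to $Q$ of some ideal triangulation.

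\emph{Part (3) and the main obstacle.} Finally, form the flip graph whose vertices are tagged triangulations of $\surf$ and whose edges are flips. By part (1) this graph is regular of the expected degree. By part (2), together with the dictionary from part (4), all canonical taggings $\sigma^{\bullet}$ of ideal triangulations lie in one connected component $\mathcal{C}$. To finish one must connect an arbitrary tagged triangulation $\tau=\varepsilon\cdot\sigma^{\bullet}$ to $\mathcal{C}$, and for this it is enough to realize each generator of $(\mathbb{Z}/2)^{\punct}$ --- a sign change at a single puncture $p$ --- by a finite sequence of flips. This is done by locally producing a self-folded triangle enclosing $p$ and flipping its two arcs against each other, which switches the tag at $p$ while fixing everything else; the construction goes through at every puncture except when $\Sigma$ is closed with $\punct=\{p\}$ its only puncture, which is exactly the excluded case. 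I expect this last point --- realizing the sign-change action by flips and pinning down precisely when it fails --- to be the main obstacle: it is the technical heart of the tagged-flip combinatorics, requires the most delicate surface-topology casework, and the once-punctured closed surface is genuinely special (there its tagged triangulations fall into two distinct flip-components, one all-plain and one all-notched).
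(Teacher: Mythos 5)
This theorem is a survey citation: the paper states it and attributes each part to \cite{FST} or \cite{Mosher}, but provides no proof at all, so there is no ``paper's own proof'' to compare against. Your sketch is, overall, a faithful reconstruction of the route those references take: minimal position and innermost-triangle induction for ideal flip-connectivity; the $(\mathbb{Z}/2)^{\punct}$ dictionary between tagged triangulations and ideal ones; local ``generalized quadrilateral'' analysis for existence and uniqueness of the tagged flip; and realizing sign changes by flips to push connectivity from the ideal to the tagged setting.

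One step is stated more casually than it can be carried out. In Part (3) you write that one can ``locally produce a self-folded triangle enclosing $p$ and flip its two arcs against each other, which switches the tag at $p$ while fixing everything else.'' A self-folded triangle alone is not enough: the two arcs with underlying curve the folded side bound only a degenerate region, and flipping one of them changes the underlying curve and hence the rest of the triangulation. What one actually needs is a \emph{once-punctured digon} around $p$ --- a self-folded triangle together with one more arc closing off a digon whose only interior marked point is $p$. Inside such a digon the four tagged triangulations form the $4$-cycle under flips, and two consecutive flips do realize the tag-switch at $p$ while leaving the arcs outside the digon untouched. Producing the digon requires a second marked point to serve as its other boundary vertex, and arranging that this can always be done (via preparatory flips to be undone afterwards) is exactly the casework you flag as the obstacle; it fails precisely in the closed once-punctured case, where no second marked point exists. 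Also, in Part (1) your local analysis gives uniqueness, but you should say a word about why the locally constructed $j$ is compatible with \emph{all} arcs of $\tau\setminus\{i\}$, not only the neighbouring ones --- this follows because $j$ is contained in the region bounded by those neighbouring arcs, but it should be said. With those two points tightened, the outline matches what \cite{FST} and \cite{Mosher} actually prove.
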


\section{The quiver with potential of a triangulation}\label{sec:QP-of-triangulations}

\subsection{The quiver of a triangulation}\label{subsec:quiver-of-triangulations}

Every ideal triangulation $\tau$ has a quiver $\qtau$ associated in a natural way. This was first observed by Fock-Goncharov \cite{FG}, Fomin-Shapiro-Thurston \cite{FST} and Gekhtman-Shapiro-Vainshtein \cite{GSV}. Let us describe $\qtau$ under the assumption that
\begin{equation}\label{eq:nice-triangulation}\text{every puncture of $\surf$ is incident to at least three arcs of $\tau$.}
\end{equation}
The vertices of $\qtau$ are the arcs of $\tau$, the arrows are drawn in the clockwise direction within each triangle of $\tau$. See Figure \ref{Fig:triangs_rel_by_flip_QP}.
        \begin{figure}[!h]
                \centering
                \includegraphics[scale=.55]{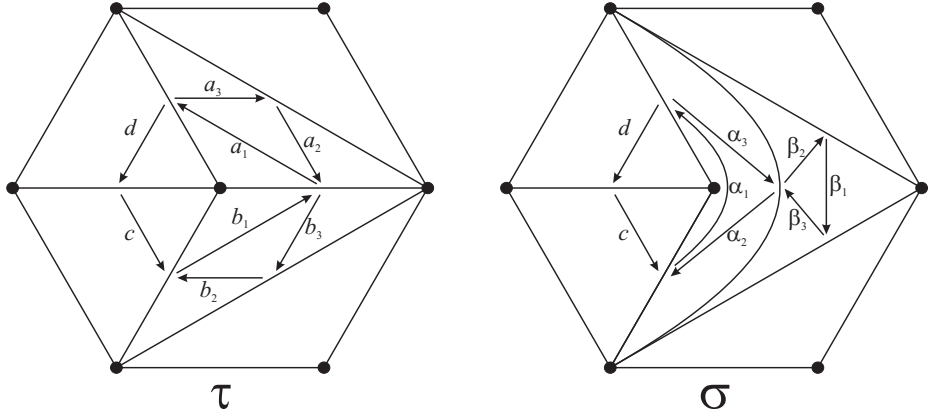}\caption{Two ideal triangulations related by a flip, with their associated quivers drawn on the surface.
                }\label{Fig:triangs_rel_by_flip_QP}
        \end{figure}

If an ideal triangulation $\tau$ is such that
\begin{equation}\label{eq:less-than-3-arcs}
\text{there are punctures incident to less than three arcs of $\tau$,}
\end{equation}
or more generally, if $\tau$ is a tagged triangulation, then
the definition of $\qtau$ is slightly more involved, but we stress that all triangulations, including the tagged ones, have naturally associated quivers. Let us also remark that the definition of the quivers of tagged triangulations is due to Fomin-Shapiro-Thurston alone.

\begin{theorem}\cite{FG, FST, GSV}\label{thm:flip->quiver-mutation} Let $\tau$ and $\sigma$ be ideal triangulations of $\surf$. If $\sigma$ is obtained from $\tau$ by the flip of an arc $i$, then $Q(\sigma)=\mu_i(Q(\tau))$. That is, if two ideal triangulations are related by a flip, then their associated quivers are related by the corresponding quiver mutation.
\end{theorem}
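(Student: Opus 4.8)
The plan is to reduce the statement to a purely local verification, triangle by triangle, since both quiver mutation and the flip are local operations: the flip of $i$ only changes the two triangles of $\tau$ sharing the arc $i$ (which together form a quadrilateral $Q_{\mathrm{quad}}$ with $i$ as one diagonal), and quiver mutation at a vertex $i$ only changes arrows incident to $i$ together with $2$-cycles created through $i$. So first I would set up the bijection between the vertex sets: the arcs of $\tau$ and of $\sigma$ coincide except that $i\in\tau$ is replaced by $j\in\sigma$, and we identify $Q(\sigma)_0$ with $Q(\tau)_0$ via $j\leftrightarrow i$. The arrows of $Q(\tau)$ lying in triangles disjoint from $i$ are untouched by the flip, and they are also untouched by $\mu_i$ (no such arrow is incident to $i$, and no $2$-cycle through $i$ involves them), so those contributions match trivially.

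Second, I would analyze the local picture. Write the quadrilateral $Q_{\mathrm{quad}}$ with sides (some possibly identified in $\Sigma$, or possibly boundary segments) and the diagonal $i$; label the four sides $a,b,c,d$ in cyclic order so that the triangle on one side of $i$ is $(a,b,i)$ and the triangle on the other side is $(c,d,i)$. The quiver $Q(\tau)$ restricted to $\{a,b,c,d,i\}$ then has, by the clockwise rule, arrows forming two $3$-cycles through $i$: one among $a,b,i$ and one among $c,d,i$ (with orientations determined by the clockwise convention). After the flip, $\sigma$ has the other diagonal $j$, and the two new triangles are $(b,c,j)$ and $(d,a,j)$, giving the corresponding two clockwise $3$-cycles through $j$. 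The task is to check that applying $\mu_i$ — Step 1 (add $[\beta\alpha]$ for each path $j\to i\to k$), Step 2 (reverse the arrows at $i$), Step 3 (cancel $2$-cycles) — to the local quiver of $\tau$ produces exactly the local quiver of $\sigma$. This is a finite check: the arrows into/out of $i$ come from the two incident triangles, Step 1 creates the arrows $[\beta\alpha]$ connecting the outer sides, some of which cancel in Step 3 against pre-existing arrows between those sides (e.g. an arrow $b\to c$ if $b$ and $c$ already bounded a common triangle on the far side), and what survives matches the clockwise arrows in the two triangles of $\sigma$.

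The main obstacle — and where the real content lies — is the bookkeeping of the degenerate cases. When the quadrilateral $Q_{\mathrm{quad}}$ is not embedded, sides can be glued: two of $a,b,c,d$ may be the same arc of $\tau$, or a side may be a boundary segment (hence not a vertex of the quiver at all), or the quadrilateral may ``wrap around'' so that after the flip a $2$-cycle appears between two outer sides that already shared a triangle, forcing a Step-3 cancellation that in the embedded case does not occur. Each such configuration has to be enumerated and checked to confirm that the hypothesis \eqref{eq:nice-triangulation} (every puncture incident to at least three arcs) rules out the truly pathological cases — in particular self-folded triangles adjacent to $i$ — and that in all remaining cases the local effect of $\mu_i$ still reproduces $Q(\sigma)$. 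I would organize this as a short case analysis on how many distinct arcs appear among the sides of $Q_{\mathrm{quad}}$ and how many of those are incident to $i$ more than once, citing \cite{FST} for the fact that \eqref{eq:nice-triangulation} excludes self-folded triangles on the arc being flipped.

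Finally, having matched the local quivers, I would note that the global quivers $\mu_i(Q(\tau))$ and $Q(\sigma)$ agree on every arrow: arrows in triangles away from $i$ agree by the first step, arrows in the two triangles touched by the flip agree by the local computation, and no other arrows exist in either quiver. Hence $Q(\sigma)=\mu_i(Q(\tau))$.
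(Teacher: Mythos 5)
The paper itself does not prove this theorem; it is stated as a cited result of Fock--Goncharov, Fomin--Shapiro--Thurston, and Gekhtman--Shapiro--Vainshtein, so there is no ``paper's own proof'' to compare against. That said, your reduction to a local check on the quadrilateral containing $i$, followed by a case analysis over the ways the four outer sides can be identified, can lie on the boundary, or can already share a triangle on the far side, is indeed the strategy used in \cite{FST} (where the computation is done at the level of the signed adjacency matrix $B(\tau)$), and the local computation you outline in the generic embedded case is correct: Step~1 of $\mu_i$ produces the arrows $b\to c$ and $d\to a$ together with $b\to a$ and $d\to c$, the latter two cancelling in Step~3 against the pre-existing $a\to b$ and $c\to d$, and Step~2 reverses the four arrows at $i$, recovering exactly the two clockwise $3$-cycles of the flipped quadrilateral.

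One point deserves to be made explicit that your sketch glosses over. You invoke \eqref{eq:nice-triangulation} to exclude self-folded triangles at $i$, which is fine (in fact flippability of $i$ already rules that out), but \eqref{eq:nice-triangulation} is a hypothesis on $\tau$ and is \emph{not} preserved by the flip: if an endpoint of $i$ is a puncture of valence exactly three in $\tau$, it has valence two in $\sigma$. In that situation the ``naive'' clockwise-within-triangles quiver of $\sigma$ contains a $2$-cycle between the two arcs incident to that puncture, and $Q(\sigma)$ is, by definition, that quiver with this $2$-cycle deleted (the ``more involved'' definition the paper points to \cite{FST} for). The proof still goes through, but only because Step~3 of $\mu_i$ performs exactly this deletion; your phrase ``what survives matches the clockwise arrows in the two triangles of $\sigma$'' should be replaced by a statement that what survives matches $Q(\sigma)$ as defined in \cite{FST}, and the coincidence between the Step-3 cancellations and the cancellations built into the FST definition of $Q(\sigma)$ is precisely the content that the degenerate-case bookkeeping must verify. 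With that caveat noted, the approach is sound.
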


Thus, for example, the two quivers drawn in Figure \ref{Fig:triangs_rel_by_flip_QP} are related by quiver mutation. Fomin-Shapiro-Thurston have shown that Theorem \ref{thm:flip->quiver-mutation} is valid in the more general setting of tagged triangulations.

\subsection{The potential of a triangulation}\label{subsec:potential-of-triangulations}

We know that every triangulation has a quiver associated to it, and we know that flips of triangulations are compatible with mutations of quivers, in the sense that if two triangulations are related by a flip, then their associated quivers are related by the corresponding quiver mutation. Could this story be lifted to the level of QPs? To try and answer this question, we first notice that for each ideal triangulation $\tau$ of $\surf$ satisfying \eqref{eq:nice-triangulation}, the quiver $\qtau$ possesses two obvious types of cycles:
\begin{itemize}
\item 3-cycles arising from triangles $\triangle$ of $\tau$, and
\item simple cycles (that is, without repeated arrows) surrounding the punctures.
\end{itemize}
To avoid redundancies, in the following definition we consider cycles up to cyclical, that is, we take only one cycle per cyclical equivalence class of cycles.

\begin{definition}\cite{L1}\label{def:potential-for-tau} Let $\tau$ be a triangulation of $\surf$ satisfying \eqref{eq:nice-triangulation}. The potential $\stau$ associated to $\tau$ is the potential on $\qtau$ that results from adding all the 3-cycles that arise from triangles of $\tau$ and all the simple cycles that surround the punctures of $\surf$.
\end{definition}

\begin{remark}
\begin{enumerate}\item In the case when $\surf$ is a surface without punctures and non-empty boundary, the potentials $\stau$ were found independently by Assem-Br\"{u}stle-Charbonneau-Plamondon in \cite{ABCP}.
\item In \cite{L1}, the definition of $\stau$ was given for every ideal triangulation $\tau$, including those satisfying \eqref{eq:less-than-3-arcs}.
\end{enumerate}
\end{remark}

\begin{example} The potentials associated to the ideal triangulations $\tau$ and $\sigma$ shown in Figure \ref{Fig:triangs_rel_by_flip_QP} are $\stau=a_1a_2a_3+b_1b_2b_3+a_1b_1cd$ and $\ssigma=\alpha_1\alpha_2\alpha_3+\beta_1\beta_2\beta_3+\alpha_1cd$. The QPs $\qstau$ and $(\qsigma,\ssigma)$ turn out to be related by QP-mutation. This can be checked directly, or seen as a consequence of the following theorem.
\end{example}

\begin{theorem}\cite{L1}\label{thm:ideal-flip->QP-mut} Let $\tau$ and $\sigma$ be ideal triangulations of a surface with marked points $\surf$. \begin{enumerate}\item If $\tau$ and $\sigma$ are related by the flip of an arc $i$, then the QPs $\mu_i(\qtau,\stau)$ and $(\qtau,\stau)$ are right-equivalent. In other words, $(\qtau,\stau)$ and $(\qsigma,\ssigma)$ are related by the QP-mutation $\mu_i$.
\item If the boundary of $\Sigma$ is not empty, then all QPs $(\qtau,\stau)$ associated to the ideal triangulations of $\surf$ are non-degenerate.
\end{enumerate}
\end{theorem}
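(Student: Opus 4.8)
The plan is to prove part~(1) by a hands-on local analysis and then to derive part~(2) from it. For part~(1) the decisive point is that a flip is a \emph{local} move: $\tau$ and $\sigma$ coincide outside the quadrilateral $R$ cut out by the two triangles of $\tau$ incident to $i$. Hence $\qtau$ and $\qsigma$ agree on all vertices and on all arrows not contained in $R$, while $\stau$ and $\ssigma$ differ only in those summands that are $3$-cycles of the two triangles at $i$ or that surround a puncture sitting at a corner of $R$. This reduces the theorem to a finite list of explicit verifications, one per combinatorial type of $R$: the generic configuration (four distinct sides and vertices, no adjacent self-folded triangle), and then every degeneration of it --- opposite or adjacent sides of $R$ identified (so that one of the two triangles becomes self-folded, or $i$ is the loop enclosing a self-folded triangle), corners of $R$ glued together or carrying punctures of valency $<3$ (so that the modified definition of $Q(\cdot)$ and $S(\cdot)$ from \cite{L1} is in force), and their combinations.

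In each case I would carry out the same four steps. First, record $\qtau,\stau$ and $\qsigma,\ssigma$ restricted to $R$. Second, apply Steps~1 and~2 of quiver mutation at $i$ and form the premutation potential $\widetilde{\mu}_i(\stau)=[\stau]+\Delta_i(\qtau)$ as in Definition~\ref{def:QP-mutation}. Third, locate the $2$-cycles of $\widetilde{\mu}_i(\qtau)$ and run the splitting procedure used in the proof of Theorem~\ref{thm:splittingthm}: produce an explicit, $\maxid$-adically convergent composition of change-of-arrow automorphisms of $\completeRQ$ of depth $\geq 1$ that absorbs the degree-$2$ part of $\widetilde{\mu}_i(\stau)$ and leaves a reduced potential. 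Fourth, compare the resulting reduced QP with $(\qsigma,\ssigma)$ and exhibit the idempotent-fixing algebra isomorphism realizing the right-equivalence. Since mutation is well defined up to right-equivalence (Theorem~\ref{thm:propertiesofQP-mutations}(1)), it suffices to do this for one convenient representative of $\stau$ (and of the cyclical-equivalence class of $\widetilde{\mu}_i(\stau)$) in each case.

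The hard part is the abundance and delicacy of the degenerate configurations. When sides of $R$ are identified or $i$ borders a self-folded triangle, the local quiver picks up extra arrows and $\widetilde{\mu}_i(\stau)$ acquires both cancellations and new cycles that interlace with the puncture-cycles; bookkeeping which arrows survive into the reduced part, and certifying that each change-of-variables automorphism used to kill a $2$-cycle has positive depth (so that the infinite composition converges --- which is exactly why one must work in $\completeRQ$ rather than in $\usualRQ$), is where essentially all the effort goes. One also has to confirm that the ad hoc definition of $Q(\cdot)$ and $S(\cdot)$ for triangulations with punctures of valency $<3$ is precisely the one for which the splitting output matches $(\qsigma,\ssigma)$, which forces the entire case analysis to be run uniformly for that definition as well.

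For part~(2), quivers of ideal (and tagged) triangulations are $2$-acyclic by Fomin--Shapiro--Thurston. By part~(1), mutating $(\qtau,\stau)$ at a vertex $i$ that is a flippable ideal arc yields a QP right-equivalent to $(\qsigma,\ssigma)$ for the flipped ideal triangulation, whose quiver is again $2$-acyclic; the only possible source of a $2$-cycle is a mutation at the folded side of a self-folded triangle. To absorb this case one passes to tagged triangulations, where every tagged arc is flippable (Theorem~\ref{thm:flips-of-tagged-triangs}(1)): the tagged analogue of part~(1), proven by the same local method, shows that the mutation of the QP of a tagged triangulation at any vertex is right-equivalent to the QP of the flipped tagged triangulation, whose quiver is $2$-acyclic by Theorem~\ref{thm:flips-of-tagged-triangs}(4) together with Fomin--Shapiro--Thurston. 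Hence, when $\partial\Sigma$ is non-empty, the entire mutation class of $(\qtau,\stau)$ consists of QPs whose underlying quivers are quivers of tagged triangulations, so all of them are $2$-acyclic --- that is, $(\qtau,\stau)$ is non-degenerate. The delicate point is again the tagged version of part~(1) (notched arcs require a separate, parallel local computation), and the non-emptiness of $\partial\Sigma$ is what guarantees both that $(\qtau,\stau)$ coincides with the QP of the ambient tagged triangulation and that the argument never leaves the class of triangulation-QPs; the conclusion genuinely fails for certain closed surfaces, whose analysis is different in nature.
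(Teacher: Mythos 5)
Your treatment of Part~(1) is essentially the paper's: a local case analysis of the flip region with explicit right-equivalences in each case. The paper organizes the cases via Fomin--Shapiro--Thurston's \emph{puzzle pieces} (unpunctured triangles, once-punctured digons, twice-punctured monogons) rather than via degenerations of the quadrilateral $R$, but the two decompositions are doing the same work, and the subtleties you flag --- vertex identifications changing the local form of $S(\tau)$, and $2$-cycles arising from gluings that the quiver ``does not see'' --- are precisely the ones the paper has to confront as well. So Part~(1) is fine.

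Part~(2) is where your route diverges from the paper and has a real gap. You propose to pass to tagged triangulations, invoke ``the tagged analogue of Part~(1), proven by the same local method,'' and conclude that the whole mutation class stays inside triangulation QPs. But the tagged analogue of Part~(1) is \emph{not} obtained by the same local method: flipping a folded side is excluded from the case analysis of Part~(1), and establishing the corresponding QP-mutation statement for tagged flips is substantially harder --- it is the content of Theorem~\ref{thm:tagged-flip<->mutation} (done later in \cite{CI-LF} and \cite{L4}, and even \cite{CI-LF} only proved it for \emph{some} sequences of flips, not for every single tagged flip, under the non-empty boundary hypothesis). So as written your argument is circular in the strong sense that it leans on a theorem considerably stronger than the one you are trying to prove. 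The paper's actual proof of Part~(2) avoids tagged triangulations entirely: it shows by induction on the number of punctures (anchored at the unpunctured case, where non-emptiness of $\partial\Sigma$ is used essentially) that $(Q(\tau),S(\tau))$ is \emph{rigid} for a specifically constructed ideal triangulation at each stage, then transports rigidity along flips using Part~(1) together with the Derksen--Weyman--Zelevinsky facts that rigidity is preserved by QP-mutation and that rigid QPs are non-degenerate. This gives a self-contained and strictly stronger conclusion (rigidity, not merely non-degeneracy) without ever invoking tagged flips. You should either adopt the rigidity argument, or prove the full tagged flip/mutation compatibility first --- but be aware that the latter is a separate and harder theorem.
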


\begin{proof}[On the proof] 
Note that if $\tau$ and $\sigma$ are ideal triangulations related by the flip of an arc $i\in\tau$, then $i$ cannot be the folded side of a self-folded triangle of $\tau$.

Fomin-Shapiro-Thurston introduce three different kinds of \emph{puzzle pieces} --planar unpunctured triangles, planar punctured digons triangulated in a very specific way, and planar twice-punctured monogons also triangulated in a specific way. These puzzle pieces come in handy because any given ideal triangulation of an arbitrary surface $\surf$ can be obtained by gluing some set of puzzle pieces.

Given a \emph{puzzle-piece decomposition} of an ideal triangulation $\tau$, any flip of an arc of $\tau$ occurs either inside a puzzle piece, or at the arc shared by two puzzle pieces that are glued together. These possibilities for a flip comprise a basic list of cases to be considered in the proof of Part (1) of Theorem \ref{thm:ideal-flip->QP-mut}, although a couple of slight subtleties have to be considered. The first subtlety concerns the fact that a puzzle-piece decomposition gives not only a gluing of pairs of sides of puzzle pieces, but in many cases also an identification of different vertices of puzzle pieces as the same marked point in $\surf$. Such identification affects how the potential $\stau$ looks like locally, in that the local configuration of $\stau$ around a pair of glued puzzle pieces is not determined only by the gluing of the side(s) shared by the puzzle pieces, but depends also on how the vertices of these are identified to obtain $\tau$. The second subtlety concerns the fact that some gluing of puzzle pieces yield some 2-cycles that ``are not seen" by the quiver $\qtau$.

The two mentioned subtleties make it necessary to refine the basic list of cases to be checked for the proof of Part (1) of Theorem \ref{thm:ideal-flip->QP-mut}. After such refinement, one has a finite (albeit larger) list of cases. Note that, by what we said in the first paragraph, none of these cases involves the flip of a folded side. In the cases that are indeed considered, one reads the QPs $(\qtau,\stau)$ and $(\qsigma,\ssigma)$ on the one hand, and the mutation $\mu_i(\qtau,\stau)$ on the other. Then an explicit right-equivalence between $\mu_i(\qtau,\stau)$ and $(\qtau,\stau)$ is given.

The proof of part (2) is done by induction on the number of punctures of $\surf$, using the non-emptiness of the boundary in an essential way. Given an unpunctured surface $(\Sigma,\marked_0)$ with non-empty boundary, it is straightforward to see that any ideal triangulation $\tau_0$ of $(\Sigma,\marked_0)$ has the property that its QP $(Q(\tau_0),S(\tau_0))$ is \emph{rigid}. Then we add punctures to $(\Sigma,\marked_0)$ one by one, thus obtaining an $n$-punctured surface $(\Sigma,\marked_n)=(\Sigma,\marked_0\cup\{p_1,\ldots,p_n\})$ for each $n\geq 1$. Every time we add a puncture we also complete the ideal triangulation $\tau_{n-1}$ of $(\Sigma,\marked_{n-1})$ to an ideal triangulation $\tau_n$ of $(\Sigma,\marked_n)$ in a very specific. We then show the rigidity of $(Q(\tau_n),S(\tau_n))$ under the assumption that $(Q(\tau_{n-1}),S(\tau_{n-1}))$ is rigid. By Part (1) of Theorem \ref{thm:ideal-flip->QP-mut}, this establishes the rigidity of the QP associated to any ideal triangulation of $(\Sigma,\marked_n)$, since rigidity is preserved by QP-mutations thanks to a result of Derksen-Weyman-Zelevinsky. And since another result of Derksen-Weyman-Zelevinsky shows that rigid QPs are non-degenerate, Part (2) of Theorem \ref{thm:ideal-flip->QP-mut} follows.
\end{proof}

Potentials corresponding to arbitrary tagged triangulations were not even mentioned in \cite{L1}, despite the fact that it is possible to read an `obvious' potential from any tagged triangulation. The reason is that, at that moment, a ``tagged version" of Part (1) of Theorem \ref{thm:ideal-flip->QP-mut} had not been established: it was not clear whether the `obvious' potentials would have the property that arbitrary tagged triangulations related by a flip would have QPs related by the corresponding QP-mutation. Among other things, a ``tagged version" of Part (1) of Theorem \ref{thm:ideal-flip->QP-mut} would allow us to deduce a fact that is not an immediate consequence of Theorem \ref{thm:ideal-flip->QP-mut}, namely, that the QPs $(\qtau,\stau)$ associated to triangulations of surfaces with empty boundary are non-degenerate.

Potentials for arbitrary tagged triangulations were defined in \cite{CI-LF} under the assumption that the underlying surface $\Sigma$ has non-empty boundary; but even with this assumption, the corresponding ``tagged version" of Theorem \ref{thm:ideal-flip->QP-mut} was not proved for all flips of tagged triangulations, but only for some of them. Indeed, it was proved in \cite{CI-LF} that for every two tagged triangulations $\tau$ and $\sigma$ of a surface with non-empty boundary, there exists a sequence $(\tau=\tau_0,\tau_1,\tau_2,\ldots,\tau_{n-1},\tau_n=\sigma)$ of tagged triangulations with the property that each $\tau_\ell$ is obtained from $\tau_{\ell-1}$ by the flip of a tagged arc $i_{\ell}\in\tau_{\ell-1}$ and the QP $\mu_{i_{\ell}}(\qtau,\stau)$ is right-equivalent to the QP $(\qtau,\stau)$. However, for two arbitrary tagged triangulations $\tau$ and $\sigma$ related by the flip of a tagged arc $i$, it was not proved that the sequence of flips just described can always be taken to be the sequence $(\tau=\tau_0,\tau_1=\sigma)$.

The referred assumption on the boundary was removed in \cite{L4}, where potentials were defined for all tagged triangulations of surfaces, including both the surfaces with boundary and the surfaces without boundary, and the ``tagged version" of Theorem \ref{thm:ideal-flip->QP-mut} was proved for all flips of tagged triangulations:

\begin{theorem}\label{thm:tagged-flip<->mutation} Let $\surf$ be a surface with marked points. Suppose $\surf$ is not a sphere with less than five punctures. If $\tau$ and $\sigma$ are tagged triangulations of $\surf$ related by the flip of a tagged arc $i$, then
\begin{enumerate}
\item\cite{L4} the QPs $\mu_i(\qtau,\stau)$ and $(\qsigma,\ssigma)$ are right-equivalent if $\surf$ is not a sphere with exactly five punctures;
\item\cite{G-LF-S} the QP $\mu_i(\qtau,\stau)$ is right-equivalent to $(\qsigma,\lambda\ssigma)$ for some non-zero scalar $\lambda$ if $\surf$ is a sphere with exactly five punctures.
\end{enumerate}
Consequently, all QPs $(\qtau,\stau)$ associated to the tagged triangulations of $\surf$ are non-degenerate.
\end{theorem}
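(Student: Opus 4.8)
The plan is to derive the final ``consequently'' assertion from parts (1) and (2), and then to outline how (1) and (2) themselves are proved.

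Granting (1) and (2), I would prove that $(\qtau,\stau)$ is non-degenerate by showing, by induction on $k$, that after every sequence $\mu_{i_1},\ldots,\mu_{i_k}$ of applicable QP-mutations one obtains a QP right-equivalent to $(Q(\rho),\lambda S(\rho))$ for some tagged triangulation $\rho$ and some nonzero scalar $\lambda$; since such a QP has $2$-acyclic underlying quiver (isomorphic, via Theorem \ref{thm:flips-of-tagged-triangs}(4), to the quiver of an ideal triangulation), this yields non-degeneracy. The base case $k=0$ is clear, with $\rho=\tau$ and $\lambda=1$. For the inductive step, Theorem \ref{thm:propertiesofQP-mutations}(1) lets us mutate the representative $(Q(\rho),\lambda S(\rho))$; the mutation $\mu_{i_k}$ is taken at the vertex of some arc of $\rho$, which is flippable by Theorem \ref{thm:flips-of-tagged-triangs}(1), so the flip produces a tagged triangulation $\rho'$, and parts (1)--(2) identify $\mu_{i_k}(Q(\rho),S(\rho))$ up to right-equivalence with $(Q(\rho'),\lambda'S(\rho'))$ for some nonzero $\lambda'$. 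Since rescaling a potential by a nonzero scalar does not change the underlying quiver of its mutation --- the arrows cancelled in the reduction step depend only on the support of the degree-$2$ part of the pre-mutation potential --- the same holds with $S(\rho)$ replaced by $\lambda S(\rho)$, which completes the induction. This argument never uses connectedness of the flip graph, so once-punctured closed surfaces cause no trouble, and the hypothesis excluding small-punctured spheres is used only to be able to invoke (1) or (2).

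For parts (1) and (2), I would transplant the proof of Theorem \ref{thm:ideal-flip->QP-mut}(1) to the tagged setting, which first requires fixing the definition of $\stau$ for tagged $\tau$. The organizing datum is the \emph{signature} $\delta_\tau\colon\punct\to\{-1,0,1\}$, recording at each puncture whether the incident tagged arcs are all plain, all notched, or of both kinds, together with the standard bijection between tagged triangulations of nowhere-zero signature and ideal triangulations. A flip of a tagged arc then belongs to one of a few local types: those leaving $\delta_\tau$ unchanged, those flipping the value of $\delta_\tau$ at a single puncture, and those at an arc incident to a puncture of signature $0$. Flips of the first type are transported from Theorem \ref{thm:ideal-flip->QP-mut}(1) by conjugating with the ``sign-changing'' $R$-algebra automorphisms that rescale the arrows around a puncture; the remaining, genuinely new types are handled one at a time, by writing out the local portions of $\stau$, $\ssigma$ and $\mu_i(\qtau,\stau)$ near the two triangles meeting the flipped arc and producing an explicit right-equivalence between $\mu_i(\qtau,\stau)$ and $(\qsigma,\ssigma)$. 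As in the ideal case, locality of the potential makes this a finite (if lengthy) check, provided one tracks the vertex identifications and the ``invisible'' $2$-cycles created by certain gluings.

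The hard part, I expect, is the five-punctured sphere, which is also why the smaller spheres are excluded. For a large enough surface each flip can be isolated inside a puzzle-piece-type neighborhood and computed locally; on the five-punctured sphere there is no room for this, some configuration necessarily wraps around the whole surface, and a careful global analysis --- the content of \cite{G-LF-S} --- shows that one lands on $(\qsigma,\lambda\ssigma)$ for a possibly non-trivial scalar $\lambda$ rather than on $(\qsigma,\ssigma)$, forcing the scalar in part (2). The spheres with at most four punctures are left out of the statement because for them the ``obvious'' potential lacks the good properties used above --- quivers or their mutations can fail to be $2$-acyclic, and the Jacobian algebras are pathological --- so they require an entirely separate treatment. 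Altogether I would expect essentially all the difficulty to lie in (i) enumerating and checking the new tagged flip cases, and (ii) the special analysis of the five-punctured sphere.
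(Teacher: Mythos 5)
Your derivation of the ``consequently'' assertion from parts (1)--(2) is sound and well-organized: tracking a pair $(\rho,\lambda)$ consisting of a tagged triangulation and a nonzero scalar through the mutation sequence, invoking Theorem~\ref{thm:flips-of-tagged-triangs}(1) for flippability and Theorem~\ref{thm:flips-of-tagged-triangs}(4) for $2$-acyclicity, and correctly observing that scalar rescaling of the potential does not affect the underlying quiver of the mutated QP. The paper does not spell out this implication, so your argument is a useful supplement there.

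For parts (1) and (2) themselves, however, your outline misses the actual engine of the proof and proposes a strategy the paper explicitly warns against. The paper organizes the reduction around the \emph{weak} signature $\epsilon_\tau:\punct\to\{-1,1\}$ (value $-1$ iff at least two notches are incident), not a three-valued signature $\delta_\tau:\punct\to\{-1,0,1\}$. Flips preserving the weak signature are handled by transport from Theorem~\ref{thm:ideal-flip->QP-mut}, as you say. But the residual case is not a list of ``new local flip types'' dispatched by writing down explicit right-equivalences near the flipped arc: it is reduced (via involutivity and ``deletion of notches'') to the single situation where $\tau$ is an ideal triangulation and $i$ is the folded side of a self-folded triangle. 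Moreover, the crucial step there is a right-equivalence $(\qtau,\stau)\simeq(\qtau,W(\tau))$ supplied by a technical theorem of \cite{L4}, and the paper's Remark directly after the theorem stresses that this right-equivalence is \emph{not} exhibited explicitly --- it is built as a limit of algebra automorphisms, or in some cases only shown to exist --- in deliberate contrast to the fully explicit case analysis of Theorem~\ref{thm:ideal-flip->QP-mut}(1). So the expectation that ``locality of the potential makes this a finite (if lengthy) check'' with explicit equivalences is exactly where the strategy breaks down. Your heuristic about the five-punctured sphere (``no room to isolate a flip'') is also not what the sketch in the paper says; it merely records that the proofs of (1) and (2) diverge at one crucial point, without committing to that explanation.
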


\begin{proof}[On the proof] The proofs of (1) and (2) differ at a crucial point (note that 
(2) is weaker than (1)).
We only sketch the proof of (1). Every tagged triangulation $\tau$ gives rise to a function $\epsilon_\tau:\punct\rightarrow\{-1,1\}$, called \emph{weak signature of $\tau$}, that takes the value $-1$ at a puncture $p$ if and only if at least two notches of tagged arcs in $\tau$ are incident to $p$.

It is easy to deduce from Theorem \ref{thm:ideal-flip->QP-mut} that if $\tau$ and $\sigma$ are tagged triangulations whose weak signatures $\epsilon_\tau$ and $\epsilon_\sigma$ are equal, and $\tau$ and $\sigma$ are related by the flip of a tagged arc $i$, then the QPs $\mu_i(\qtau,\stau)$ and $(\qsigma,\ssigma)$ are right-equivalent. This reduces the proof of Theorem \ref{thm:tagged-flip<->mutation} to the case of tagged triangulations $\tau$ and $\sigma$ related by a flip, but with different weak signatures $\epsilon_\tau$ and $\epsilon_\sigma$. A moment of reflection shows that in this case $\epsilon_\tau$ and $\epsilon_\sigma$ differ at exactly one puncture, say $p$.

One can assume, without loss of generality, that $\varepsilon_{\tau}(p)=1=-\varepsilon_\sigma(p)$ (this is because every flip is an involution and every QP-mutation is an involution up to right-equivalence). Applying a combinatorial procedure of ``deletion of notches", one can further assume that $\tau$ is an ideal triangulation. The proof of Theorem \ref{thm:tagged-flip<->mutation} is hence reduced to showing that if $\sigma$ is the tagged triangulation obtained by flipping the folded side $i$ of an arbitrary self-folded triangle of an ideal triangulation $\tau$, then $\mu_i(\qtau,\stau)$ is right-equivalent to $(\qsigma,\ssigma)$. The proof of this last implication is rather involved and relies crucially on \cite[Theorem 6.1]{L4}, a technical result that guarantees the existence of a right-equivalence between $(\qtau,\stau)$ and $(\qtau,W(\tau))$ for certain specific potential $W(\tau)$ that, on the other hand, has the property that $(\qtau,W(\tau))$ can be easily seen to be right-equivalent to $\mu_i(\qsigma,\ssigma)$.
\end{proof}

\begin{remark} The crucial right-equivalence in the proof of Theorem \ref{thm:tagged-flip<->mutation}, namely, the right-equivalence between $(\qtau,\stau)$ and $(\qtau,W(\tau))$ above, is not exhibited explicitly: for one triangulation it is constructed as the limit of certain algebra automorphisms of $\complete{\qtau}$, and for the other triangulations it is only shown to exist. This is very unlike the proof of Part (1) of Theorem \ref{thm:ideal-flip->QP-mut}, where, despite the division into cases, all right-equivalences are defined explicitly.
\end{remark}

\begin{remark}\label{rem:sphere-with-4-puncts}
The sphere with four punctures has been dealt with in \cite{GG} and \cite{G-LF-S}. We stress the fact that, in order to obtain a non-degenerate potential under Definition \ref{def:potential-for-tau}, it is strictly necessary to multiply exactly one of the cycles around punctures by a scalar $\lambda\in\ka\setminus\{0,1\}$ (the rest of the cycles are still taken as are, that is, multiplied by 1).
\end{remark}

%

\section{Dimension and representation type of Jacobian algebras}\label{sec:Jacobian-algebras}

From the perspective of representation theory of associative algebras, there are several natural questions one can ask regarding the Jacobian algebras of the QPs $\qstau$. For example: Are they finite-dimensional?, are they tame/wild? 

\begin{theorem}\cite{L1}\label{thm:non-empty-bound-fin-dim} Let $\surf$ be a surface with non-empty boundary (and any number of punctures). Then for any ideal triangulation $\tau$ of $\surf$, the Jacobian algebra $\jacob{\qtau}{\stau}$ has finite dimension over $\ka$.
\end{theorem}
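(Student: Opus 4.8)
The plan is to prove finite-dimensionality of $\jacob{\qtau}{\stau}$ by first reducing to a single convenient triangulation and then bounding the lengths of paths that survive in the Jacobian algebra. The reduction step goes as follows. By Theorem \ref{thm:ideal-flip->QP-mut}(1), any two ideal triangulations of $\surf$ related by a flip have right-equivalent QPs (after the appropriate QP-mutation), and by Theorem \ref{thm:flips-of-tagged-triangs}(2) any two ideal triangulations are connected by a sequence of flips. Since QP-mutation preserves finite-dimensionality of Jacobian algebras (Theorem \ref{thm:propertiesofQP-mutations}(4)) and right-equivalent QPs have isomorphic Jacobian algebras (the Proposition after Definition \ref{def:QP-stuff}), it suffices to exhibit \emph{one} ideal triangulation $\tau$ of $\surf$ whose Jacobian algebra is finite-dimensional. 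Because $\surf$ has non-empty boundary, one can choose $\tau$ to have no self-folded triangles and, more strongly, to be built by gluing puzzle pieces in a controlled way; this is where non-emptiness of the boundary is used essentially.

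The core of the argument is then a combinatorial/topological bound on paths in $\completeRQ$ modulo $J(\stau)$. I would argue that every sufficiently long path in $\qtau$ lies in the Jacobian ideal, so that $\jacob{\qtau}{\stau}$ is spanned by the (finitely many) paths of bounded length. Concretely, each arrow $\alpha$ of $\qtau$ sits inside some triangle of $\tau$, and the cyclic derivative $\partial_\alpha \stau$ expresses the length-$2$ path completing the $3$-cycle of that triangle as a sum of other paths coming from the cycles around punctures incident to the endpoints of $\alpha$. Reading these relations geometrically, one sees that a path in $\qtau$ corresponds to a curve in $\Sigma$ that crosses the arcs of $\tau$ in succession, and the Jacobian relations let one ``push'' such a curve across triangles and around punctures. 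The key claim is that whenever such a curve is long enough, it must either retrace a triangle (allowing a relation from a $3$-cycle to shorten it to zero modulo the ideal) or wind around a puncture more than the bounded number of times permitted by the cyclic relation $\partial_\alpha$ applied to the puncture-cycle, again killing it in $\jacob{\qtau}{\stau}$. Since $\Sigma$ is compact with non-empty boundary, there is a global bound — roughly in terms of the number of arcs, triangles, and punctures of $\tau$ — beyond which no path can avoid one of these two situations; thus all but finitely many paths vanish in the Jacobian algebra.

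The main obstacle I expect is making the ``push the curve'' argument rigorous and uniform. One must set up a careful bookkeeping of how the Jacobian relations interact — in particular, the relations coming from $3$-cycles and those coming from puncture-cycles overlap precisely at arrows adjacent to punctures, and an arrow may be shared by two triangles and be incident to two punctures at once — so a naive induction on path length need not terminate. The clean way around this is probably to pass through a single well-chosen triangulation $\tau$ (using the reduction above) for which the pattern of $3$-cycles and puncture-cycles is especially transparent, e.g. one in which each puncture is enclosed by a single triangle or has a standard ``fan'' of arcs, and then to verify the length bound by a direct but finite case analysis on the local configurations (triangles, and the neighborhoods of punctures) that occur. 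A secondary technical point is treating triangulations satisfying \eqref{eq:less-than-3-arcs}, where the definition of $\qtau$ and $\stau$ is more delicate; but since every such triangulation is flip-connected to one satisfying \eqref{eq:nice-triangulation}, the reduction step absorbs this case as well. Once the finitely-many-surviving-paths claim is established for the chosen $\tau$, finite-dimensionality of $\jacob{\qtau}{\stau}$ for \emph{all} ideal triangulations $\tau$ of $\surf$ follows immediately from the flip-invariance already invoked.
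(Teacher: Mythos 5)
Your reduction step is logically sound given the tools the survey makes available: Theorem \ref{thm:ideal-flip->QP-mut}(1), Theorem \ref{thm:flips-of-tagged-triangs}(2), Theorem \ref{thm:propertiesofQP-mutations}(4), and the proposition that right-equivalences preserve Jacobian algebras together do reduce the problem to a single conveniently chosen ideal triangulation. Worth flagging, though, that the proof in \cite{L1} does not proceed by this reduction; it gives a direct argument valid for an arbitrary ideal triangulation, and the survey's remark that Theorem \ref{thm:non-empty-bound-fin-dim} ``can be either proved independently of Theorem \ref{thm:empty-bound-fin-dim}, or deduced from it via restriction of QPs'' hints at two routes, neither of which is the flip-reduction you propose. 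So your route is genuinely different from, though compatible with, the paper's.

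The substantive gap is in the core step, and it is larger than a matter of bookkeeping. You claim that a long path ``retracing a triangle'' or ``winding around a puncture too many times'' is \emph{killed} by the relations, but the cyclic derivatives do not annihilate such paths --- they \emph{rewrite} them. Concretely, $\partial_\alpha(\stau)=0$ says a length-$2$ subpath completing a triangle equals (minus) a longer path going the other way around the punctures at the endpoints of $\alpha$, and vice versa; applying such a relation can \emph{increase} length. There is no a priori ``bounded number of times permitted'' for winding around a puncture: the puncture-cycle relation by itself only trades one configuration for another. What is actually needed --- and what \cite{L1} provides by a careful case analysis exploiting the non-empty boundary --- is a demonstration that the rewriting process is confluent/terminating, ultimately because arrows in triangles adjacent to the boundary yield \emph{one-sided} cyclic derivatives (no puncture-cycle term on one side), so a path driven toward the boundary has nowhere further to go and must die. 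Your proposal identifies the right geometric intuition and candidly names the obstacle (``a naive induction on path length need not terminate''), but does not supply the argument that resolves it. As it stands, the crucial assertion that $\maxid^N\subseteq J(\stau)$ for some $N$ is asserted, not proved.
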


\begin{theorem}\cite{Ladkani}\label{thm:empty-bound-fin-dim} Let $\surf$ be a surface with empty boundary. Suppose that $\surf$ is not a sphere with less than 5 punctures. Then for any ideal triangulation $\tau$ of $\surf$, the Jacobian algebra $\jacob{\qtau}{\stau}$ has finite dimension over $\ka$.
\end{theorem}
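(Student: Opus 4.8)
The plan is to reduce, via the compatibility of flips with QP-mutations, to checking finite-dimensionality for a single conveniently chosen ideal triangulation, and then to deduce that remaining case from the boundary case (Theorem~\ref{thm:non-empty-bound-fin-dim}) by relating the closed surface $\surf$ to a surface with non-empty boundary.

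For the reduction, I would argue as follows. By Mosher's theorem (part~(2) of Theorem~\ref{thm:flips-of-tagged-triangs}) any two ideal triangulations of $\surf$ are joined by a finite sequence of flips of ideal triangulations, and by part~(1) of Theorem~\ref{thm:ideal-flip->QP-mut} --- which, unlike part~(2), imposes no condition on the boundary and hence applies to our closed $\surf$ --- each such flip of an arc $i$ takes $(\qtau,\stau)$ to a QP right-equivalent to $\mu_i(\qtau,\stau)$. Since right-equivalent QPs have isomorphic Jacobian algebras and finite-dimensionality of Jacobian algebras is preserved under QP-mutation (part~(4) of Theorem~\ref{thm:propertiesofQP-mutations}), finite-dimensionality of $\jacob{\qtau}{\stau}$ is independent of the chosen ideal triangulation $\tau$. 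Thus it will suffice to exhibit \emph{one} ideal triangulation $\tau_0$ of $\surf$ with finite-dimensional Jacobian algebra.

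To produce such a $\tau_0$, I would fix a puncture $p$ of $\surf$ and form the surface with boundary $\bar\Sigma$ obtained from $\Sigma$ by excising a small open disk around $p$ and marking a single point $v$ on the new boundary circle; a short case check (positive genus versus genus $0$) shows that, as long as $\surf$ is not a sphere with fewer than five punctures, one can choose $p$ so that $\bar\Sigma$ admits an ideal triangulation $\bar\tau$. By Theorem~\ref{thm:non-empty-bound-fin-dim} the algebra $\jacob{Q(\bar\tau)}{S(\bar\tau)}$ is finite-dimensional. Collapsing the boundary circle of $\bar\Sigma$ back to $p$ turns $\bar\tau$ into an ideal triangulation $\tau_0$ of $\surf$; following the combinatorics at $v$ (the boundary segment is absorbed, two arcs of $\bar\tau$ become isotopic, a loop that would otherwise appear in the quiver is discarded) one should identify $Q(\tau_0)$ with the quiver obtained from $Q(\bar\tau)$ by an explicit local surgery and verify that $S(\tau_0)=\pi(S(\bar\tau))+C_p$, where $\pi\colon\complete{Q(\bar\tau)}\twoheadrightarrow\complete{Q(\tau_0)}$ is the associated algebra surjection and $C_p$ is the one new cycle of $Q(\tau_0)$ that encircles the reinstated puncture $p$. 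Since $\pi$ carries every cyclic derivative $\partial_\eta S(\bar\tau)$ into $J(\pi(S(\bar\tau)))$ or to $0$, it descends to a surjection $\jacob{Q(\bar\tau)}{S(\bar\tau)}\twoheadrightarrow\jacob{Q(\tau_0)}{\pi(S(\bar\tau))}$, so $\jacob{Q(\tau_0)}{\pi(S(\bar\tau))}$ is finite-dimensional, and it remains to pass from $\pi(S(\bar\tau))$ to $S(\tau_0)=\pi(S(\bar\tau))+C_p$.

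That last step is where I expect the real work to be. The Jacobian ideals $J(\pi(S(\bar\tau)))$ and $J(S(\tau_0))$ are generated respectively by $\partial_\eta\pi(S(\bar\tau))$ and by the sums $\partial_\eta\pi(S(\bar\tau))+\partial_\eta C_p$, so neither contains the other, and the extra relations $\partial_\eta C_p$ --- paths running almost all the way around $p$ --- do not, on their own, stop the growth of paths. The hard part will be to show directly that $\maxid^N\subseteq J(S(\tau_0))$ for some $N$: I would try filtering $\complete{Q(\tau_0)}$ by winding number around $p$, using the triangle relations common to both potentials to reduce arbitrary paths, and then using $C_p$ and its cyclic derivatives to force $C_p$ into the radical of $\jacob{Q(\tau_0)}{S(\tau_0)}$ and make it act nilpotently. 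The bookkeeping is complicated further by the various boundary-loop and self-folded configurations that can appear around $v$ --- exactly the vertex-identification subtleties already flagged in the discussion of Theorem~\ref{thm:ideal-flip->QP-mut}. Finally, the excluded spheres with fewer than five punctures are precisely the closed surfaces for which this opening-up procedure breaks down; and, as recorded in Remark~\ref{rem:sphere-with-4-puncts}, for the four-punctured sphere the statement is genuinely false for the unrescaled potential of Definition~\ref{def:potential-for-tau}, so the hypothesis cannot simply be removed.
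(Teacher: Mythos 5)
The paper does not prove this theorem; it only cites it as a result of Ladkani, so there is no internal proof to compare against. Evaluating your proposal on its own merits: the reduction step is sound. Part~(1) of Theorem~\ref{thm:ideal-flip->QP-mut} carries no boundary hypothesis, Mosher's theorem (part~(2) of Theorem~\ref{thm:flips-of-tagged-triangs}) connects any two ideal triangulations by flips, and part~(4) of Theorem~\ref{thm:propertiesofQP-mutations} makes finite-dimensionality mutation-invariant; together these correctly reduce the problem to exhibiting one ideal triangulation $\tau_0$ with finite-dimensional Jacobian algebra.

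The genuine gap is in the second half, and you flag it yourself: passing from $\pi(S(\bar\tau))$ to $S(\tau_0)=\pi(S(\bar\tau))+C_p$. The relation you need, $\maxid^N\subseteq J(S(\tau_0))$ for some $N$, is left as a sketch (``I would try filtering\ldots''), and this is precisely the content of the theorem, not a bookkeeping step. Adding a new cycle to a potential changes every cyclic derivative $\partial_\eta$ for arrows $\eta$ on $C_p$, so finite-dimensionality of $\jacob{Q(\tau_0)}{\pi(S(\bar\tau))}$ gives essentially no control over $\jacob{Q(\tau_0)}{S(\tau_0)}$: the two Jacobian ideals are incomparable, as you note. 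Note also that the paper's remark after the theorem points the \emph{other} way --- Theorem~\ref{thm:non-empty-bound-fin-dim} (boundary case) is deduced from Theorem~\ref{thm:empty-bound-fin-dim} (closed case) via restriction of QPs, because restriction (deleting arrows and keeping the surviving part of the potential) yields a surjection of Jacobian algebras in the direction closed $\twoheadrightarrow$ bounded. Your attempted deduction runs in the reverse, harder, direction; there is no analogous surjection, and the argument would have to confront head-on the combinatorics of paths winding around the reinstated puncture. A secondary issue: the surjection $\jacob{Q(\bar\tau)}{S(\bar\tau)}\twoheadrightarrow\jacob{Q(\tau_0)}{\pi(S(\bar\tau))}$ is asserted but not verified --- when the collapse identifies arcs or kills a boundary-parallel arc, $\pi$ need not send arrows to arrows, and one must check case by case that each $\pi(\partial_\eta S(\bar\tau))$ lies in the Jacobian ideal downstairs. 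So the proposal correctly isolates where the content lives but does not supply it.
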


\begin{remark}\begin{enumerate}
\item Theorem \ref{thm:non-empty-bound-fin-dim} can be either proved independently of Theorem \ref{thm:empty-bound-fin-dim}, or deduced from it via \emph{restriction of QPs}.
\item In the case of unpunctured surfaces with non-empty boundary, Theorem \ref{thm:non-empty-bound-fin-dim} was proved by Assem-Br\"{u}stle-Charbonneau-Plamondon in \cite{ABCP} independently of \cite{L1} and \cite{Ladkani}.
\item For polygons with at most one puncture, the finite-dimensionality of the Jacobian algebras $\jacob{\qtau}{\stau}$ was already known to Caldero-Chapoton-Schiffler \cite{CCS} and Schiffler \cite{Schiffler}, although in the referred papers the authors did not work with complete path algebras or potentials.
\item Theorem \ref{thm:empty-bound-fin-dim} is due to Ladkani. In the case of spheres with at least five punctures, it was proved independently by Trepode--Valdivieso-D\'{i}az in \cite{TV}.
\item For the sphere with exactly four punctures, the finite-dimensionality of Jacobian algebras of non-degenerate potentials follows from  \cite{GG} (where these algebras are shown to be tubular) and \cite{G-LF-S}. See Remark \ref{rem:sphere-with-4-puncts} above.
\end{enumerate}
\end{remark}

Let us turn to the problem of whether the Jacobian algebras $\jacob{\qtau}{\stau}$ are tame or wild.

\begin{definition} Let $\Lambda$ be a finite-dimensional associative $\mathbb{C}$-algebra.
\begin{enumerate}
\item We say that $\Lambda$ is \emph{tame} if for each
dimension vector $\mathbf{d}$ there are finitely many
$\Lambda$-$\ka[X]$-bimodules $M_1,\ldots,M_t$,
free of finite rank as right $\ka[X]$-modules, such that
every indecomposable $\Lambda$-module $N$ with $\operatorname{dim}(N) = \mathbf{d}$
is isomorphic to a $\Lambda$-module of the form
$$
M_i \otimes_{\ka [X]} \left(\ka [X]/(X-\lambda)\right)
$$
for some $1 \leq i \leq t$ and some $\lambda \in \ka $.
\item We say that $\Lambda$ is \emph{wild} if there is a
$\Lambda$-$\ka\langle X,Y\rangle$-bimodule $M$, free of finite rank as
right $\ka\langle X,Y\rangle$-module, such that
the exact functor
$$
M \otimes_{\ka\langle X,Y\rangle} - \colon \operatorname{mod}(\ka\langle X,Y\rangle) \to \operatorname{mod}(\Lambda)
$$
sends indecomposable modules to indecomposable ones and pairwise non-isomorphic modules to pairwise non-isomorphic ones. Here, $\ka\langle X,Y\rangle$ denotes the free $\ka$-algebra in two (non-commuting) generators $X$ and $Y$.
\end{enumerate}
\end{definition}

A famous result of Drozd asserts that any given finite-dimensional $\mathbb{C}$-algebra is either tame or wild, and not simultaneously tame and wild. This is Drozd's \emph{tame/wild dichotomy theorem}.

In the case when $\surf$ is a surface with non-empty boundary and without punctures, Assem-Br\"{u}stle-Charbonneau-Plamondon have shown in \cite{ABCP} that $\jacob{\qtau}{\stau}$ is a \emph{gentle algebra}, and this implies its tameness, for gentle algebras are well-known to be tame. More generally, we have:

\begin{theorem}\cite{G-LF-S}\label{thm:tame-jacobian-algebras}\begin{enumerate}\item For any QP $(Q,S)$, if  $\jacob{Q}{S}$ is tame, then $\jacob{\mu_i(Q}{S)}$ is tame as well.
\item Any surface with marked points has a triangulation $\tau$ such that the Jacobian algebra $\jacob{\qtau}{\stau}$ is tame.
\item Consequently, for every surface $\surf$ and every tagged triangulation $\tau$ of $\surf$, the Jacobian algebra $\jacob{\qtau}{\stau}$ is tame.
\end{enumerate}
\end{theorem}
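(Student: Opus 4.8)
The plan is to prove the three parts in the stated order, using (1) to reduce (3) to (2), so that the real work is concentrated in (2).

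\emph{Part (1).} First note that by Theorem \ref{thm:propertiesofQP-mutations}(4) finite-dimensionality is inherited by $\mathcal{P}(\mu_i(Q,S))$, so tameness is a meaningful condition there. I would argue through decorated representations in the sense of Derksen--Weyman--Zelevinsky. The mutation $\mu_i$ of a decorated representation is obtained from the original one by keeping the vector spaces at the vertices $\neq i$, replacing the space at $i$ by an explicit subquotient (a $\ker/\operatorname{im}$ of linear maps assembled from the structure maps incident to $i$), reassembling all the arrows by explicit formulas, and finally splitting off a trivial direct summand as in Theorem \ref{thm:splittingthm}; moreover $\mu_i$ is, up to isomorphism, an involution on decorated representations that sends indecomposables to indecomposables and changes dimension vectors by the explicit mutation rule, which is in particular finite-to-one. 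Every operation producing $\mu_i$ is given by polynomials in the structure constants together with passage to kernels and images, and the ranks of these are semicontinuous, hence generically constant. Consequently, given a $\mathcal{P}(Q,S)$-$\ka[X]$-bimodule $M$ that is free of finite rank over $\ka[X]$ --- i.e.\ a one-parameter family --- one may apply $\mu_i$ fiberwise and, after inverting finitely many polynomials (removing finitely many $\lambda\in\ka$), obtain again a bimodule over $\mathcal{P}(\mu_i(Q,S))$ that is free of finite rank over the localization. Taking the finitely many bimodules that parametrize the indecomposables of $\mathcal{P}(Q,S)$ in each of the finitely many dimension vectors that $\mu_i$ can send to a fixed $\mathbf{d}'$, applying $\mu_i$, stratifying by the dimension vector of the output, adjoining the finitely many removed modules as the constant families $N\otimes_\ka\ka[X]$ (which satisfy $(N\otimes_\ka\ka[X])\otimes_{\ka[X]}\ka[X]/(X-\lambda)\cong N$ for all $\lambda$), and using that the Jacobian algebra is unchanged on passing to the reduced part \cite{DWZ1}, we obtain finitely many bimodules parametrizing the indecomposables of $\mathcal{P}(\mu_i(Q,S))$ of dimension $\mathbf{d}'$. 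Hence $\mathcal{P}(\mu_i(Q,S))$ is tame.

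\emph{Part (2), the crux.} We must exhibit, for every surface $\surf$, \emph{one} triangulation $\tau$ with $\jacob{\qtau}{\stau}$ tame. If $\surf$ has non-empty boundary and no punctures this is immediate: by Assem--Br\"{u}stle--Charbonneau--Plamondon \cite{ABCP} the algebra $\jacob{\qtau}{\stau}$ is gentle, and gentle algebras are tame. For the remaining surfaces the plan is: (i) choose $\tau$ so that every puncture is incident to a small fixed number of arcs arranged in a standard local ``gadget'', so that the puncture-surrounding cycles of $\stau$ meet the rest of the potential in a controlled way; (ii) read off $\jacob{\qtau}{\stau}$ and identify it as a member of a classically tame class --- for surfaces \emph{with} boundary one can arrange for it to be skew-gentle, or at worst clannish, both known to be tame; (iii) for surfaces with \emph{empty} boundary, where there is no decomposition into such boundary pieces, reduce to (ii) by ``opening'' one puncture into a boundary component and relating the two Jacobian algebras by a tameness-preserving construction (a restriction-type operation, or a recollement whose end terms are tame), after checking separately the finitely many small surfaces not reached this way: the sphere with $\leq 3$ punctures, the once-punctured closed surfaces, the sphere with $4$ punctures (tubular, hence tame, by \cite{GG}; cf.\ Remark \ref{rem:sphere-with-4-puncts}), and the sphere with $5$ punctures, whose Jacobian algebras turn out to be of explicitly tame type. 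An alternative that handles (ii) and (iii) simultaneously for all surfaces is to set up a geometric model of $\operatorname{mod}\jacob{\qtau}{\stau}$ by strings and bands of curves on $\surf$, with the band modules being exactly the sources of the one-parameter families. I expect this step --- choosing the correct triangulation and tame certificate for the punctured and for the closed surfaces, where gentleness no longer applies --- to be the main obstacle.

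\emph{Part (3), the deduction.} Fix $\surf$ and a tagged triangulation $\tau$. All \emph{ideal} triangulations of $\surf$ are connected by flips (Theorem \ref{thm:flips-of-tagged-triangs}(2)), so by Theorem \ref{thm:ideal-flip->QP-mut}(1) their QPs lie in one QP-mutation class; and by Theorem \ref{thm:flips-of-tagged-triangs}(3),(4) together with Theorem \ref{thm:tagged-flip<->mutation} the QP of any tagged triangulation is QP-mutation equivalent to that of an ideal one. (When $\surf$ is a closed surface with exactly one puncture --- the case excluded from Theorem \ref{thm:flips-of-tagged-triangs}(3) --- one instead notes that every tagged triangulation has the same quiver with potential as an ideal triangulation, and again invokes Theorem \ref{thm:flips-of-tagged-triangs}(2).) Hence all the QPs $(\qsigma,\ssigma)$, with $\sigma$ a triangulation of $\surf$, form a single QP-mutation class. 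By Part (2) some member of this class has tame Jacobian algebra, so by Part (1) every member does; in particular $\jacob{\qtau}{\stau}$ is tame. The one extra point is the sphere with exactly five punctures, where QP-mutation of these potentials can introduce a non-zero scalar (Theorem \ref{thm:tagged-flip<->mutation}(2)); there one uses additionally that rescaling the potential by a non-zero scalar does not change the representation type --- a fact verified directly for this surface --- in order to propagate tameness along the flip graph.
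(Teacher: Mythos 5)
Your skeleton---establish Part (1), concentrate the work in Part (2), and deduce Part (3) by connectivity of the flip graph---is the paper's, and Part (3) is fine. For Part (1), however, your argument glosses over the central obstruction that the paper singles out: after mutating a one-parameter family fiberwise and inverting a polynomial to make ranks constant, you are left with a bimodule that is free of finite rank over a \emph{localization} $R'$ of $\ka[X]$, not over $\ka[X]$ itself, and that does not by itself witness tameness in the sense of the definition. Adjoining the finitely many ``removed'' constant families does not repair this, since the generic part of your family never becomes a $\ka[X]$-bimodule. The paper resolves this by defining mutation directly on $\jacob{Q}{S}$-$R$-bimodules for localizations $R$ of $\ka[X]$ and then invoking the Dowbor--Skowro\'nski characterization of tameness in terms of such localizations \cite{DS}; without that (or some equivalent reformulation) your Part (1) is incomplete.

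For Part (2) you correctly guess the route for surfaces with non-empty boundary (the paper: clannish algebras when $\surf$ is not a punctured monogon, and a deformation of a skewed-gentle algebra for punctured monogons). But for closed surfaces your step (iii)---``opening'' a puncture into a boundary component and transferring tameness by a restriction or recollement---is a genuine gap: that operation changes the surface, the quiver and the Jacobian algebra, and you give no construction carrying tameness across; the sketch in the paper does nothing of the sort. Instead, for $\partial\Sigma=\varnothing$ the paper chooses a triangulation of the closed surface so that $\jacob{\qtau}{\stau}$ is a \emph{deformation of a gentle algebra}, and deduces tameness by Crawley-Boevey's result on (deformations of) biserial/gentle algebras \cite{CB} (the same technique used again in the proof of Theorem \ref{thm:always-tame}). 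Your own hedge that this is ``the main obstacle'' is accurate---this is exactly where your proposal and the paper diverge.
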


\begin{proof}[On the proof] The proof of Part (1) of Theorem \ref{thm:tame-jacobian-algebras} presents a challenge: Quotients of finite-rank free $\ka[X]$-modules are not always free $\ka[X]$-modules, not every inclusion of $\ka[X]$-modules is a section, and not every surjective morphism of $\ka[X]$-modules is a retraction. So, one cannot define the mutation of $\jacob{Q}{S}$-$\ka[X]$-bimodules in the ``obvious" way, since the mutation process involves taking certain cokernel, as well as a section and a retraction. What Geiss-LF-Schr\"oer end up doing in \cite{G-LF-S} is the following: For each localization $R$ of $\ka[X]$, they define the mutation of a $\jacob{Q}{S}$-$R$-bimodule $M$ (assumed to be a finite-rank free right $R$-module), as a $\jacob{\mu_i(Q}{S)}$-$R'$-bimodule $M'$ for some other localization $R'$ of $\ka[X]$ that depends on $M$ (as a right $R'$-module, $M'$ turns out to be finite-rank free). With such definition, to show Part (1) of Theorem \ref{thm:tame-jacobian-algebras} they then use a characterization of tameness given by Dowbor-Skowro\'nski \cite{DS}  in terms of localizations of the polynomial ring $\ka[X]$.

If $\surf$ is a surface with non-empty boundary (and any number of punctures), different from a (punctured) monogon, then there exists a triangulation $\tau$ of $\surf$ such that the Jacobian algebra $\jacob{\qtau}{\stau}$ is a \emph{clannish algebra}. In the case of (punctured) monogons, one can always find a triangulation such that the Jacobian algebra is a \emph{deformation} of a \emph{skewed-gentle algebra}. If, on the other hand, the boundary of $\surf$ is empty, then there exists a triangulation $\tau$ of $\surf$ such that the Jacobian algebra $\jacob{\qtau}{\stau}$ is a deformation of a gentle algebra. In each of the three situations just described, it is easy to find an explicit triangulation $\tau$ with the stated property.

Part (3) is an immediate consequence of Parts (1) and (2).
\end{proof}

Theorem \ref{thm:tame-jacobian-algebras} tells us that for every QP of the form $(\qtau,\stau)$ the associated Jacobian algebra is tame. Something stronger is true: except for a couple of surfaces, for any non-degenerate potential on the quiver of a triangulation the Jacobian algebra is tame. To be precise:

\begin{theorem}\cite{G-LF-S}\label{thm:always-tame} Let $\surf$ be a surface with marked points and let $\tau$ be a triangulation of $\surf$.
\begin{enumerate}\item If $\surf$ is not a torus with exactly one marked point, then for any non-degenerate potential $S$ on $\qtau$, the Jacobian algebra $\jacob{\qtau}{S}$ is tame.
 \item If $\surf$ is a torus with exactly one marked point (hence $\Sigma$ has either empty boundary or exactly one boundary component), then $\qtau$ admits a non-degenerate potential $W$ such that $\jacob{\qtau}{W}$ is a wild algebra.
 \end{enumerate}
\end{theorem}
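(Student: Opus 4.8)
The two parts require genuinely different methods: part (1) is a global reduction to one well-chosen triangulation per surface, whereas part (2) is an explicit computation on the Markov quiver.

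For part (1), fix a surface $\surf$ that is not a torus with one marked point, and let $\sigma$ be the triangulation furnished by Theorem \ref{thm:tame-jacobian-algebras}(2), so that $\jacob{\qsigma}{\ssigma}$ is a gentle, a clannish, a deformation of a skewed-gentle, or a deformation of a gentle algebra. Given now an arbitrary triangulation $\tau$ of $\surf$ and a non-degenerate potential $S$ on $\qtau$, connect $\tau$ to $\sigma$ by a finite sequence of flips (Theorems \ref{thm:flips-of-tagged-triangs} and \ref{thm:flip->quiver-mutation}). Since $S$ is non-degenerate, the corresponding iterated QP-mutation of $(\qtau,S)$ is defined, its underlying quiver stays $2$-acyclic at every step, and at the quiver level it tracks the flips (Theorem \ref{thm:flip->quiver-mutation}); it therefore produces a non-degenerate potential $S'$ on $\qsigma$. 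By involutivity of QP-mutation up to right-equivalence (Theorem \ref{thm:propertiesofQP-mutations}(2)), mutating $(\qsigma,S')$ back along the reversed sequence returns $(\qtau,S)$ up to right-equivalence, and since tameness is invariant under QP-mutation (Theorem \ref{thm:tame-jacobian-algebras}(1)), it suffices to show that $\jacob{\qsigma}{S'}$ is tame for \emph{every} non-degenerate potential $S'$ on $\qsigma$.

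This last step is the heart of the matter: one must control all non-degenerate potentials on the model quiver $\qsigma$, not just $\ssigma$. If $\surf$ is unpunctured with non-empty boundary, the only cycles of $\qsigma$ are the $3$-cycles of its interior triangles, each arrow lying on exactly one of them; non-degeneracy forces every such $3$-cycle to occur with non-zero coefficient, so every non-degenerate potential on $\qsigma$ is right-equivalent to $\ssigma$ (rescale arrows to normalize the coefficients), whence $\jacob{\qsigma}{S'}\cong\jacob{\qsigma}{\ssigma}$ is gentle and thus tame. When $\surf$ has punctures or empty boundary the space of reduced potentials on $\qsigma$ is larger, but one checks that for the chosen model $\sigma$ the Jacobian algebra of \emph{any} non-degenerate potential is again a controlled deformation of the relevant clannish, skewed-gentle or gentle algebra; the tameness of all such deformations is then obtained exactly as in the proof of Theorem \ref{thm:tame-jacobian-algebras}(1), via the Dowbor--Skowro\'nski characterization of tameness in terms of localizations of $\ka[X]$. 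Combining the two steps gives part (1). The delicate point is the classification, for the model $\sigma$, of which reduced potentials can occur on $\qsigma$ and why their Jacobian algebras remain within this tame family; this is precisely where the hypothesis that $\surf$ is not a once-marked torus is used.

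For part (2), the once-marked torus is the case where the preceding analysis collapses. If $\surf$ is the once-punctured torus then $\qtau$ is the Markov quiver (three vertices, with a doubled arrow between each cyclically consecutive pair); if $\surf$ is the torus with one marked point on its single boundary component, the quiver is a closely related $4$-vertex quiver, handled by a similar explicit construction. One writes down an explicit potential $W$ on $\qtau$ --- a suitable combination of $3$-cycles and longer cycles of $\qtau$ --- and verifies three things: (i) $W$ is non-degenerate, using the $\mathbb{Z}/3$-symmetry of the Markov quiver and the fact that its mutation at any vertex is again the Markov quiver, so that only finitely many mutated QPs must be inspected; (ii) $\jacob{\qtau}{W}$ is finite-dimensional; and (iii) $\jacob{\qtau}{W}$ is wild, which one exhibits by identifying a subquotient of $\jacob{\qtau}{W}$ with the path algebra with relations of a known wild algebra, equivalently by producing a representation embedding $\operatorname{mod}(\ka\langle X,Y\rangle)\to\operatorname{mod}(\jacob{\qtau}{W})$. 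The main obstacle here is to meet (i) and (iii) with a single potential: $W$ must be ``degenerate enough'' to force wild behaviour in the Jacobian algebra, yet non-degenerate as a QP, and exhibiting and verifying such a $W$ is the crux of part (2).
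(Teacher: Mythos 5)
Your overall skeleton for part (1) — reduce via flips plus QP-mutation invariance of tameness to a single well-chosen triangulation $\sigma$, then control \emph{all} non-degenerate potentials on $\qsigma$ — is the right strategy, but the details depart from the paper's proof in ways that leave genuine gaps.

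First, the paper does not use the triangulation from Theorem \ref{thm:tame-jacobian-algebras}(2). That $\sigma$ is chosen so that $\jacob{\qsigma}{\ssigma}$ is gentle/clannish/skewed-gentle, which says nothing about other non-degenerate potentials on $\qsigma$. Instead the paper constructs a different $\sigma$ with sharp combinatorial properties: every puncture is incident to at least four arcs of $\sigma$, and $\qsigma$ has no double arrows (and in Section \ref{sec:uniqueness} one also asks for no loops). It is these properties that force any non-degenerate potential to be right-equivalent to $S'+S''$, where $S'$ is the sum of all $3$-cycles and $S''\in\maxid^4$, which is the structural fact you need. Without that normalization you cannot reduce to a single algebra up to deformation.

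Second, your analysis of the unpunctured boundary case is incorrect. For an unpunctured surface with boundary other than a disk (e.g.\ an annulus or a higher-genus surface), $\qsigma$ has cycles of length $\geq 4$ that are not $3$-cycles of interior triangles, so ``the only cycles of $\qsigma$ are the $3$-cycles'' fails, and the conclusion that every non-degenerate potential is right-equivalent to $\ssigma$ does not follow by merely rescaling arrows. Even in that case the paper proceeds via the $S'+S''$ normalization and a deformation argument.

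Third, you attribute the tameness of the deformed algebra to the Dowbor--Skowro\'nski localization criterion. That criterion is what \cite{G-LF-S} uses to prove Theorem \ref{thm:tame-jacobian-algebras}(1) (invariance of tameness under QP-mutation). For Theorem \ref{thm:always-tame}(1) the paper instead shows $\jacob{\qsigma}{S'}$ is gentle (possibly infinite-dimensional), that every truncation of $\jacob{\qsigma}{S'+S''}$ is a deformation of the corresponding truncation of $\jacob{\qsigma}{S'}$, and then invokes Crawley-Boevey's theorem \cite{CB} on tameness of biserial algebras/deformations to conclude. These are different inputs.

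Fourth, you omit that spheres with fewer than six punctures and the annulus with exactly two marked points do not admit a triangulation with the required combinatorial properties and are treated separately in the paper.

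For part (2) your outline is essentially correct: the two model quivers are the Markov quiver $T_1$ (empty boundary) and the four-vertex quiver $T_2$ (non-empty boundary); non-degeneracy of the explicit $W_1,W_2$ is checked using the symmetry $\mu_i(T_\ell)\cong T_\ell$; and one must verify wildness of $\jacob{T_\ell}{W_\ell}$. However, the wildness in \cite{G-LF-S} is established via Galois covering techniques, not by exhibiting a wild subquotient or a direct representation embedding; this is a more specialized tool than what you propose, and the paper's sketch flags it explicitly. Your remark about finite-dimensionality is not actually needed for the statement (wildness is a statement about the algebra regardless of dimension), though it happens to hold.
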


\begin{proof}[On the proof] To prove (1), in \cite{G-LF-S} we show that if $\surf$ is not a torus with exactly one marked point, nor a sphere with less than 6 punctures, nor an annulus with exactly two marked points, then $\surf$ admits a triangulation $\sigma$ such that every puncture is incident to at least $4$ arcs of $\sigma$, and moreover, the quiver $\qsigma$ does not have double arrows. These properties of $\sigma$ imply that
\begin{itemize}
\item any non-degenerate potential on $\qsigma$ is right-equivalent to $S'+S''$ for some potential $S''$ involving only cycles of length at least 4, where $S'$ is the sum of all 3-cycles of $\qsigma$;
\item the Jacobian algebra $\jacob{\qsigma}{S'}$ is gentle (albeit possibly infinite-dimensional).
\item (each \emph{truncation} of) the Jacobian algebra $\jacob{\qsigma}{S'+S''}$ is a deformation of (the corresponding truncation) of $\jacob{\qsigma}{S'}$.
\end{itemize}
Using a theorem of Crawley-Boevey \cite{CB}, this allows to deduce the tameness of $\jacob{\qsigma}{S'+S''}$. Part (1) of Theorem \ref{thm:always-tame}, combined with Parts (2), (3) and (4) of Theorem \ref{thm:flips-of-tagged-triangs}, then implies that for the arbitrarily given triangulation $\tau$ one has that for any non-degenerate potential $S$ on $\qtau$, the algebra $\jacob{\qtau}{S}$ is tame.

The spheres with less than 6 punctures and the annulus with exactly two marked points are treated separately.

For Part (2), consider the quivers
$$
T_1=
\vcenter{\xymatrix@-1.2pc{
&&2 \ar@<0.4ex>[dddrr]^{\gamma_1}\ar@<-0.4ex>[dddrr]_{\gamma_2}\\
&&&&\\
&&&&\\
1 \ar@<0.4ex>[rruuu]^{\alpha_1}\ar@<-0.4ex>[rruuu]_{\alpha_2} &&&&3 \ar@<0.4ex>[llll]^{\beta_1}\ar@<-0.4ex>[llll]_{\beta_2}
}}
\ \ \ \ \  \text{and} \ \ \ \ \
T_2 =
\vcenter{\xymatrix{
         &&1\ar@<-.8ex>[dd]_{\alpha_1}\ar@<.8ex>[dd]^{\alpha_2}&\\
3\ar[rru]^(.6){\beta_1}\ar@<1.8ex>@/^{4pc}/[rrrr]^{\delta}&&      &&4\ar[llu]_(.6){\beta_2} \\
         &&2\ar[llu]^{\gamma_1}\ar[rru]_{\gamma_2}
}}
$$
It is straightforward to see that if $\surf$ is a torus with exactly one marked point and $\tau$ is any triangulation of $\surf$, then $\qtau$ is isomorphic to $T_1$ if $\partial\Sigma=\varnothing$, and to $T_2$ if $\partial\Sigma\neq\varnothing$. It is also easy to see that for any vertex $i$ of $T_\ell$ ($\ell=1,2$), the quiver $\mu_i(T_\ell)$ is isomorphic to $T_\ell$ via a quiver isomorphism $\pi_i:T_\ell\rightarrow \mu_i(T_\ell)$ that acts as a uniquely determined permutation on the (common) vertex set (the quiver isomorphism is unique for $\ell=2$, and there is a little choice involved at the arrow level for $\ell=1$). Let
\begin{equation}\label{eq:wild-pot-for-torus-without-bound}
W_1=\alpha_1\beta_1\gamma_2+\alpha_1\beta_2\gamma_1+\alpha_2\beta_1\gamma_1\in\complete{T_1}
\end{equation}
$$
\text{and}
$$
\begin{equation}\label{eq:wild-pot-for-torus-w-bound}
W_2=\alpha_1\beta_1\gamma_1+\alpha_1\beta_2\gamma_2+\alpha_2\beta_2\delta\gamma_1\in\complete{T_2}.
\end{equation}
Direct computation shows that for any vertex $i$ of $T_\ell$, the image of $W_\ell$ under the isomorphism $\pi_i$ is precisely the potential of the QP-mutation $\mu_i(T_\ell,W_\ell)$. This readily implies the non-degeneracy of $(T_\ell,W_\ell)$. Using techniques of Galois coverings, it is shown in \cite{G-LF-S} that the Jacobian algebra $\jacob{T_\ell}{W_\ell}$ is wild for $\ell=1,2$.
\end{proof}

The next result says that the quivers of the form $\qtau$ are pretty much the only quivers for which we can find non-degenerate potentials with tame Jacobian algebras. That is, if we take an arbitrary quiver $Q$, not necessarily arising from a triangulation of a surface, such that $\jacob{Q}{S}$ is a tame algebra for some non-degenerate potential $S\in\completeRQ$, then we can be almost certain that $Q$ arises from a triangulation of a surface with marked points. Here is the precise statement:

\begin{theorem}\cite{G-LF-S}\label{thm:tame-type-classification} Let $Q$ be any 2-acyclic quiver. If there exists a non-degenerate potential $S\in\completeRQ$ such that the Jacobian algebra $\jacob{Q}{S}$ is tame, then either $Q$ is the quiver associated to a triangulation of a surface with marked points, or $Q$ belongs to the quiver mutation class of one of the following nine exceptional quivers: $E_6$, $E_7$, $E_8$, $\widetilde{E}_6$, $\widetilde{E}_7$, $\widetilde{E}_8$, $E^{(1,1)}_6$, $E^{(1,1)}_7$, $E^{(1,1)}_8$.
\end{theorem}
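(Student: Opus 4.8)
The plan is to reduce the statement to the classification of skew-symmetric quivers of finite mutation type, exploiting that the property ``$Q$ admits a non-degenerate potential with tame Jacobian algebra'' is invariant under quiver mutation. First I would note that we may assume $Q$ is connected: for a disjoint union the Jacobian algebra is the product of the Jacobian algebras of the components, a product is tame if and only if each factor is, and a QP is non-degenerate if and only if its restriction to each component is. For the invariance, suppose $(Q,S)$ is non-degenerate with $\jacob{Q}{S}$ tame. For any vertex $i$, the QP $\mu_i(Q,S)$ is again non-degenerate (immediate from the definition, since its iterated mutations form a subfamily of those of $(Q,S)$, and by Theorem \ref{thm:propertiesofQP-mutations}(2) the mutation class is symmetric), its underlying quiver is $2$-acyclic and coincides with the combinatorial mutation $\mu_i(Q)$, and $\jacob{\mu_i(Q}{S)}$ is tame by Theorem \ref{thm:tame-jacobian-algebras}(1). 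Iterating, every quiver $Q'$ in the mutation class of $Q$ carries a non-degenerate potential $S'$ with $\jacob{Q'}{S'}$ tame. Hence it suffices to exhibit, for any $Q$ that is neither the quiver of a triangulation nor mutation-equivalent to one of the nine listed quivers, some $Q'$ in its mutation class admitting \emph{no} non-degenerate potential with tame Jacobian algebra.

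I would then argue by the number of vertices and the mutation type of $Q$. If $Q$ has at most two vertices, it is the one-vertex quiver or a generalized Kronecker quiver $K_m$; for $m\leq 2$ these are quivers of triangulations (of a square and of an annulus), while for $m\geq 3$ the only potential on $K_m$ is $0$ and $\jacob{K_m}{0}=\ka K_m$ is wild, so the hypothesis of the theorem is vacuous. Now assume $Q$ has at least three vertices. If $Q$ is of \emph{infinite} mutation type, then --- by the standard fact that a connected quiver on at least three vertices is of infinite mutation type exactly when some quiver mutation-equivalent to it has a triple (or higher) arrow between two vertices --- the mutation class of $Q$ contains a quiver $Q'$ having a full subquiver isomorphic to $K_m$ with $m\geq 3$. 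For any potential $S'$ on $Q'$, killing all arrows of $Q'$ outside this $K_m$ produces a quotient of $\jacob{Q'}{S'}$; because $K_m$ is a \emph{full} subquiver of the $2$-acyclic quiver $Q'$ and has no composable paths of positive length and no $2$-cycles, every relation coming from a cyclic derivative dies in this quotient, so the quotient is exactly $\ka K_m$, which is wild. Thus $\jacob{Q'}{S'}$ is wild for every potential $S'$ on $Q'$, and by the reduction above this case cannot occur.

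Finally, suppose $Q$ is of \emph{finite} mutation type and has at least three vertices. Here I would invoke the Felikson--Shapiro--Tumarkin classification of connected skew-symmetric quivers of finite mutation type: $Q$ is either the adjacency quiver of an ideal triangulation of a surface with marked points, or mutation-equivalent to one of eleven exceptional quivers --- the nine in the statement together with $X_6$ and $X_7$. In the first case $Q=\qtau$ for some ideal triangulation $\tau$ (adjacency quivers of ideal triangulations are the quivers of Subsection \ref{subsec:quiver-of-triangulations}, and by Theorem \ref{thm:flips-of-tagged-triangs}(4) passing to tagged triangulations does not enlarge this class), so the first alternative of the theorem holds; and if $Q$ is mutation-equivalent to one of the nine, the second alternative holds. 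The theorem therefore reduces to showing that $X_6$ and $X_7$ admit \emph{no} non-degenerate potential with tame Jacobian algebra, and I expect \textbf{this to be the main obstacle}: since $X_6$ and $X_7$ are of finite mutation type (hence have only double arrows in their mutation classes), the $K_m$-trick is unavailable and a direct argument is needed. The approach would be to use that each of $X_6$ and $X_7$ carries, up to right-equivalence and rescaling, an essentially unique non-degenerate potential, and then to prove that the associated Jacobian algebra is wild --- for instance via the Galois-covering techniques used for the once-marked torus in the proof of Theorem \ref{thm:always-tame}(2), or by producing an explicit wild quotient algebra. With $X_6$ and $X_7$ excluded, every case falls into one of the two alternatives of the theorem.
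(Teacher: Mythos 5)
Your proposal follows the same route as the paper: mutation-invariance of tameness (Theorem~\ref{thm:tame-jacobian-algebras}(1)), Drozd's dichotomy, and the Felikson--Shapiro--Tumarkin classification of finite mutation type. The extra ingredient you supply for the infinite mutation type case --- mutating to a quiver $Q'$ containing a full $K_m$-subquiver with $m\geq 3$ and observing that every cyclic derivative of any potential dies in the quotient by the arrows outside $K_m$ (because $2$-acyclicity of $Q'$ forces all relevant cycles to leave $K_m$) --- is also sound and is indeed needed, since the FST classification only covers quivers of finite mutation class.

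The step you flag as the main obstacle, ruling out $X_6$ and $X_7$, is a genuine gap in your writeup: you sketch a plan but do not carry it out. For the record, this is in fact where the technical work of~\cite{G-LF-S} lies, and it is done essentially as you guess: one shows that each of $X_6$, $X_7$ carries, up to right-equivalence and rescaling, a unique non-degenerate potential, and then one verifies that the resulting Jacobian algebra is wild. Since you identify this as unresolved and the paper's own sketch also simply defers to~\cite{G-LF-S} at this point, the proposal is architecturally correct with an honestly acknowledged hole rather than a wrong turn. (One harmless slip: the quiver of a triangulated square is the one-vertex quiver, while $K_1$ arises from a pentagon; this does not affect the argument.)
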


\begin{proof}[On the proof] To prove Theorem \ref{thm:tame-type-classification} we make use of Theorem \ref{thm:tame-jacobian-algebras} Part (1), Drozd's famous tame/wild dichotomy theorem \cite{Drozd}, and Felikson-Shapiro-Tumarkin's crucial classification of quivers of finite quiver mutation class \cite{FeST}.
\end{proof}

\begin{remark} The quivers $E_6$, $E_7$, $E_8$, $\widetilde{E}_6$, $\widetilde{E}_7$, $\widetilde{E}_8$, $E^{(1,1)}_6$, $E^{(1,1)}_7$ and $E^{(1,1)}_8$ are not very complicated and can be found in \cite{FeST} or \cite{G-LF-S}.
\end{remark}

\section{Uniqueness of non-degenerate potentials}\label{sec:uniqueness}

For quivers of the form $\qtau$ with $\tau$ a (tagged) triangulation of $\surf$, \cite{G-LF-S} classifies all non-degenerate potentials on $\qtau$ in practically all cases. It turns out that almost all quivers of the form $\qtau$ admit exactly one non-degenerate potential. We give the precise statements in Theorems \ref{thm:non-empty-bound-uniqueness} and \ref{thm:empty-bound-uniqueness} below.

\begin{theorem}\cite{G-LF-S}\label{thm:non-empty-bound-uniqueness} Let $\surf$ be a surface with non-empty boundary and any number of punctures, and let $\tau$ be any tagged triangulation of $\surf$.
\begin{enumerate}\item If $\surf$ is not the unpunctured torus with exactly one marked point, then, up to right-equivalence, $\stau$ is the only non-degenerate potential on $\qtau$.
\item If $\surf$ is the unpunctured torus with exactly one marked point, then $\qtau$ admits exactly two non-degenerate potentials up to right-equivalence, namely $\stau$ and another potential $W$ such that $\jacob{\qtau}{W}$ is wild ($W$ is the potential referred to in Theorem \ref{thm:always-tame}).
\end{enumerate}
\end{theorem}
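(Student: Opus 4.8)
The statement to prove is Theorem \ref{thm:non-empty-bound-uniqueness}: for a surface $\surf$ with non-empty boundary and any number of punctures, and $\tau$ a tagged triangulation, the potential $\stau$ is (up to right-equivalence) the unique non-degenerate potential on $\qtau$, unless $\surf$ is the unpunctured one-marked-point torus, in which case there are exactly two. The overall strategy is a reduction-to-base-cases argument, exploiting the combinatorics of triangulations together with general QP machinery. First I would observe that, by Theorem \ref{thm:flips-of-tagged-triangs}(4), every quiver $\qtau$ of a tagged triangulation is isomorphic to $\qsigma$ for some \emph{ideal} triangulation $\sigma$, so it suffices to treat ideal triangulations. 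Next, by Theorem \ref{thm:ideal-flip->QP-mut}(1) and the fact that right-equivalence classes of non-degenerate potentials are permuted bijectively under QP-mutation (using Theorem \ref{thm:propertiesofQP-mutations}(1)--(2)), the property ``$\qtau$ has a unique non-degenerate potential up to right-equivalence'' is invariant under flips. Since (Theorem \ref{thm:flips-of-tagged-triangs}(2)) any two ideal triangulations are flip-connected, it is enough to verify the statement for \emph{one} conveniently chosen triangulation in each mutation-equivalence class of $\qtau$'s — equivalently, for one triangulation per surface $\surf$.

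**Key steps.** After this reduction I would proceed as follows. (a) For the generic situation, pick a triangulation $\sigma$ of $\surf$ in which every puncture is incident to at least four arcs and $\qsigma$ has no double arrows — such $\sigma$ exists for all surfaces with non-empty boundary except a short list of small exceptions (essentially the low-complexity cases: few-punctured monogons/digons, the once-punctured torus with boundary, etc.). For such $\sigma$, every cycle in $\qsigma$ has length $\ge 3$, and in fact the only cycles of length $3$ are the ones coming from triangles; a Newton-filtration / degree argument on $\complete{\qsigma}$ then shows that any potential $S$ on $\qsigma$ is, after an $R$-$R$-bimodule change of arrows, cyclically equivalent to $\lambda_\triangle$-weighted sum of the triangle $3$-cycles plus higher-order terms, and non-degeneracy forces each $\lambda_\triangle \ne 0$. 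A rescaling automorphism of $\complete{\qsigma}$ normalizes all these scalars to $1$, and then one argues — using the explicit structure of $\jacob{\qsigma}{S}$ and an induction on the $\maxid$-adic degree analogous to the splitting-theorem argument of Theorem \ref{thm:splittingthm} — that the higher-order terms can be absorbed by a further automorphism, yielding a right-equivalence with $S(\sigma)$. (b) Handle each surface on the exceptional list individually: here $\qtau$ has few vertices, so one can literally enumerate the possible potentials up to right-equivalence. The once-punctured torus with non-empty boundary has quiver $T_2$; for it one must show that $S(\tau)$ and the wild potential $W_2$ from \eqref{eq:wild-pot-for-torus-w-bound} are the only non-degenerate potentials up to right-equivalence — a finite but delicate computation using the $\mathbb{Z}/2$-type symmetry $\pi_i$ of $T_2$ and the fact that non-degeneracy constrains which coefficients can vanish.

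**The main obstacle.** The hardest part is step (a): controlling the higher-order terms of an arbitrary non-degenerate potential and showing they are always removable by an automorphism. Unlike the splitting theorem, here one is not deleting $2$-cycles but rather proving that \emph{all} admissible ``deformations'' of $S(\sigma)$ by length-$\ge 4$ cycles are trivial deformations, i.e. that the QP $(\qsigma, S(\sigma))$ is rigid in a suitably strong sense along the directions that preserve non-degeneracy. This requires a careful analysis of the Jacobian ideal: one shows that for the chosen $\sigma$ the space of cyclic cocycles (the obstruction space governing deformations of the potential) is spanned, modulo cyclic equivalence and the image of the relevant differential, by the classes one already sees, so no new right-equivalence classes arise. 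The no-double-arrows and $\ge 4$-arcs-per-puncture conditions are exactly what make this cohomological computation tractable, which is why the small surfaces must be excised and treated by hand. A secondary obstacle is bookkeeping: ensuring the finite list of exceptional surfaces with non-empty boundary is genuinely complete and that the once-punctured-torus-with-boundary case is the \emph{only} one producing a second non-degenerate potential.
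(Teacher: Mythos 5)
Your overall strategy — reduce via Theorem \ref{thm:flips-of-tagged-triangs}(4) to ideal triangulations, use flip-connectivity and the flip/QP-mutation compatibility of Theorem \ref{thm:ideal-flip->QP-mut}(1) together with the fact that right-equivalent QPs have right-equivalent mutations to propagate uniqueness across a mutation class, verify uniqueness on one ``nice'' triangulation per surface via a rigidity/absorption criterion, and dispose of the exceptional torus by a finite computation plus the tame/wild dichotomy of Drozd — is essentially the paper's route. The paper implements the middle step by appealing to a sufficiency criterion from \cite{G-LF-S} (their Theorem~8.19) and constructing a triangulation $\sigma$ meeting its hypotheses; your ``degree-filtration and deformation'' description is the same idea phrased differently, and your finite-enumeration for $T_2$ corresponds to the paper's use of the criterion ``\,$S+S'$ is right-equivalent to $S$ for all $S'\in\maxid^{m+1}$\,'' with $m=3$ for $\stau$ and $m=4$ for $W=W_2$.

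Two concrete points deserve correction or sharpening. First, a naming slip: the exceptional surface is the \emph{unpunctured} torus with exactly one (boundary) marked point, not a once-punctured torus with boundary; the relevant quiver is $T_2$, as you say, but the surface has $\punct=\varnothing$. Second, and more substantively, your central step (a) is stated as: normalize the degree-$3$ coefficients to $1$ and then ``absorb the higher-order terms, yielding a right-equivalence with $S(\sigma)$.'' As written this is not coherent, because $S(\sigma)$ itself contains higher-order terms — the cycles around punctures, which have length $\geq 4$ precisely because $\sigma$ was chosen with $\geq 4$ arcs at each puncture. Those cycles cannot be absorbed; rather, non-degeneracy forces them to appear with non-zero coefficients $x_p$, the genuinely spurious higher-order terms are absorbed, and then one must separately normalize the $x_p$ to $1$. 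This last normalization is exactly where non-emptiness of the boundary is used in an essential way: for closed surfaces the analogous step only achieves a global scalar multiple of $\stau$ (compare Theorem \ref{thm:empty-bound-uniqueness}(1) and item (d) in its proof sketch), and even the coefficient-$1$ normalization of the $3$-cycle part is constrained because arrows are shared between triangles, so a diagonal rescaling cannot be prescribed triangle-by-triangle independently. Your sketch should flag both normalizations explicitly, since they are the reason the non-empty-boundary result is cleaner than the closed case and why the argument cannot be blindly transported.
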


\begin{proof}[On the proof] Given an arbitrary 2-acyclic quiver $Q$, \cite[Theorem 8.19]{G-LF-S} gives a sufficiency criterion for a potential on $Q$ to be the only non-degenerate potential on $Q$. For $\surf$ with non-empty boundary and different from the unpunctured torus with exactly one marked point, in the proof of \cite[Theorem 8.20]{G-LF-S} a triangulation $\sigma$ is constructed whose associated potential $\ssigma$ satisfies the hypothesis of the alluded criterion. Part (1) then follows from Theorem \ref{thm:tagged-flip<->mutation} and the fact that right-equivalent QPs have right-equivalent QP-mutations (the latter is a result of Derksen-Weyman-Zelevinsky).

In the case of the unpunctured torus with exactly one marked point, we already know that $(\qtau,\stau)$ and $(\qtau,W)$ are non-degenerate, where $W=W_2$ is the potential given in \ref{eq:wild-pot-for-torus-w-bound}. That they are not right-equivalent can be proved either directly, or by means of the following argument: We know that $\mathcal{P}(\qtau,\stau)$ is tame and $\mathcal{P}(\qtau,W)$ is wild, and a result of Derksen-Weyman-Zelevinsky tells us that right-equivalent QPs have isomorphic Jacobian algebras; thus $(\qtau,\stau)$ and $(\qtau,W)$ cannot be right-equivalent by Drozd's tame/wild dichotomy theorem.

Given an arbitrary 2-acyclic quiver $Q$, \cite[Lemma 8.18]{G-LF-S} gives a sufficiency criterion for a potential $S\in\complete{Q}$ and a positive integer $m$ to satisfy the property that for every potential $S'\in\maxid^{m+1}$ the QP $(Q,S+S')$ be right-equivalent to $(Q,S)$. Direct computation shows that $\stau$ satisfies the criterion with $m=3$, and $W=W_2$ satisfies the criterion with $m=4$. Finally, using basic linear algebra, it is shown that every non-degenerate potential on $\qtau$ is right-equivalent to $\stau+S'$ or to $W+S'$ for some potential $S'\in\maxid^5$. Part (2) follows.
\end{proof}

\begin{theorem}\label{thm:empty-bound-uniqueness} Let $\surf$ be a surface with empty boundary, and let $\tau$ be any tagged triangulation of $\surf$.
\begin{enumerate}
\item \cite{G-LF-S} If the genus of $\Sigma$ is positive and the number $|\marked|$ of punctures is at least three, then any non-degenerate potential on $\qtau$ is right-equivalent to a non-zero scalar multiple of $\stau$.
\item \cite{Ladkani0} If $\surf$ is a positive-genus surface with exactly one puncture, then the degree-3 component of $\stau$ (that is, the part of $\stau$ that arises from the triangles of $\tau$) is a non-degenerate potential which is not right-equivalent to any scalar multiple of $\stau$.
\item \cite{G-LF-S} If $\surf$ is a sphere with at least five punctures, then any non-degenerate potential on $\qtau$ is right-equivalent to a non-zero scalar multiple of $\stau$.
\end{enumerate}
\end{theorem}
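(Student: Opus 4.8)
The plan is to deduce~(1) and~(3) by the method used for Part~(1) of Theorem~\ref{thm:non-empty-bound-uniqueness}, and to obtain~(2) — for which that method is not available, because the once-punctured closed surfaces are exactly the ones where flip-connectivity of tagged triangulations fails (Theorem~\ref{thm:flips-of-tagged-triangs}(3)) — by an explicit computation reinforced by an invariant argument.

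For~(1) and~(3) I would first produce, for each surface $\surf$ satisfying the hypotheses, one concrete ideal triangulation $\sigma$ such that $\qsigma$ has no double arrows and every puncture has valency at least four; such a $\sigma$ exists for every positive-genus surface with at least three punctures and for every sphere with at least six punctures, while the sphere with exactly five punctures — where the arc-endpoint count rules this out — must be handled separately. For such a $\sigma$ one tries to check the linear-algebraic sufficiency criterion of \cite[Theorem~8.19]{G-LF-S}, which guarantees that $\ssigma$ is, up to a non-zero scalar, the only non-degenerate potential on $\qsigma$; concretely this reduces to showing that a suitable family of maps assembled from the cyclic derivatives $\partial_{\alpha}(\ssigma)$ is surjective in each relevant degree, and the absence of double arrows together with the high valencies is precisely what keeps this computation tractable. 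Granted uniqueness up to scalar for the single triangulation $\sigma$, I would transport it to an arbitrary tagged triangulation $\tau$ of $\surf$: by Parts~(2),~(3) and~(4) of Theorem~\ref{thm:flips-of-tagged-triangs} one passes from $\sigma$ to $\tau$ through finitely many flips; each flip is realized by a QP-mutation via Theorem~\ref{thm:tagged-flip<->mutation} (up to a non-zero scalar in the sphere-with-five-punctures case of~(3)); QP-mutation sends right-equivalent QPs to right-equivalent QPs and, being involutive up to right-equivalence (Theorem~\ref{thm:propertiesofQP-mutations}), induces a bijection on right-equivalence classes of non-degenerate QPs; and along flips it carries the family of non-zero scalar multiples of $S(\tau')$ into the corresponding family for the flipped triangulation. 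Hence $\stau$ is, up to right-equivalence and rescaling, the only non-degenerate potential on $\qtau$, which is~(1) and~(3).

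For~(2) the target is the existence of a second non-degenerate potential rather than uniqueness. I would fix an explicit ideal triangulation $\tau$, write down $\qtau$ and the degree-$3$ component $S^{(3)}$ of $\stau$ — the sum of the $3$-cycles coming from the triangles of $\tau$, with the puncture-cycle discarded — and check that $(\qtau,S^{(3)})$ is non-degenerate; since Mosher's connectivity of \emph{ideal} triangulations (Theorem~\ref{thm:flips-of-tagged-triangs}(2)) \emph{does} hold for once-punctured closed surfaces, it suffices to verify that mutating $(\qtau,S^{(3)})$ along each flip leaves the underlying quiver $2$-acyclic; for the once-punctured torus this collapses to a self-mutation computation of the kind performed in the proof of Theorem~\ref{thm:always-tame}, and in higher genus to a finite case check. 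I would then separate $(\qtau,S^{(3)})$ from every scalar multiple $(\qtau,\lambda\stau)$ with $\lambda\neq 0$ by an invariant of right-equivalence: rescaling $\stau$ does not change the isomorphism type of its Jacobian algebra, whereas $\jacob{\qtau}{S^{(3)}}$ is not isomorphic to $\jacob{\qtau}{\stau}$ — the cleanest separating feature being that the latter is finite-dimensional (Theorem~\ref{thm:empty-bound-fin-dim}) while the former is infinite-dimensional — and since right-equivalent QPs have isomorphic Jacobian algebras, this gives~(2).

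The hard part will be verifying the sufficiency criterion \cite[Theorem~8.19]{G-LF-S} for the chosen $\sigma$ in~(1) and~(3): it requires controlling, degree by degree, the orbit of $\ssigma$ under the group of $R$-algebra automorphisms of $\complete{\qsigma}$ fixing the idempotents, and ruling out any genuinely new non-degenerate potential in higher degrees. This becomes feasible only because of the careful choice of $\sigma$, and even so the sphere with exactly five punctures — where the valency obstruction forbids the optimal $\sigma$ and where one cannot in any case do better than uniqueness up to scalar — has to be disposed of by a separate, more delicate argument. A secondary difficulty, arising in~(2), is exhibiting infinitely many linearly independent paths that survive in $\jacob{\qtau}{S^{(3)}}$, which is what produces the contrast with the finite-dimensional $\jacob{\qtau}{\stau}$.
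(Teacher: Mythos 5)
Your overall plan mirrors the paper's: construct one well-chosen ideal triangulation $\sigma$ on which uniqueness (up to scalar) can be established directly, then transport it to any tagged triangulation $\tau$ via flip/mutation through Theorems~\ref{thm:flips-of-tagged-triangs} and~\ref{thm:tagged-flip<->mutation}; and, for (2), use the dimension of the Jacobian algebra (infinite versus finite) to separate $S(\tau)^{(3)}$ from scalar multiples of $\stau$. Two points in your write-up deviate from the paper's proof in ways worth flagging.

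First, you propose to apply the criterion of \cite[Theorem~8.19]{G-LF-S} to $\sigma$. That criterion gives \emph{strict} uniqueness and is the right tool for the non-empty-boundary Theorem~\ref{thm:non-empty-bound-uniqueness}(1), but on closed surfaces it cannot hold as stated: multiplying each puncture-cycle of $\ssigma$ by a nonzero scalar $x_p$ produces a whole family $S(\sigma,\bx)$ of non-degenerate potentials, and these need not be right-equivalent to $\ssigma$ itself. The paper's argument therefore replaces a direct appeal to 8.19 by a three-step normalization: (b) absorb into $S(\sigma,\bx)$ any terms of a non-degenerate potential not cyclically equivalent to terms of $\ssigma$ (via a delicate limit of automorphisms); (c) show every non-degenerate potential is right-equivalent to some $S(\sigma,\bx)+S'$; (d) show $S(\sigma,\bx)$ is right-equivalent to a single scalar multiple $\lambda\ssigma$, using algebraic closure of the ground field to extract roots. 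Moreover the triangulation $\sigma$ must have \emph{no loops}, in addition to the no-double-arrow and high-valency conditions you impose; this hypothesis is essential to the argument and is what obstructs the extension to twice-punctured closed surfaces (the open conjecture following the theorem).

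Second, for Part~(2), the sentence ``it suffices to verify that mutating $(\qtau,S^{(3)})$ along each flip leaves the underlying quiver $2$-acyclic'' is, taken literally, too weak: non-degeneracy quantifies over all finite sequences of mutations. What Ladkani actually establishes — and what your own reference to the self-mutation argument in the proof of Theorem~\ref{thm:always-tame} correctly points toward — is the stronger statement that for ideal triangulations $\tau$, $\sigma$ of a once-punctured closed surface of positive genus related by a flip of $i$, the QP $\mu_i(\qtau,\stau^{(3)})$ is right-equivalent to $(\qsigma,\ssigma^{(3)})$. It is this flip/mutation compatibility of the truncated potentials, combined with Mosher connectivity, that makes non-degeneracy an immediate corollary; a one-step $2$-acyclicity check would not close the induction. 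Your finite-versus-infinite-dimensional invariant for distinguishing $S^{(3)}$ from $\lambda\stau$ is exactly the paper's argument.
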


\begin{proof}[On the proof]
Part (1) of Theorem \ref{thm:empty-bound-uniqueness} follows from a combination of several facts (the proofs of some of which are rather technical):
\begin{itemize}
\item[(a)] Any surface as in (1) admits a triangulation $\sigma$ satisfying that no arc in $\sigma$ is a loop, that $\qsigma$ does not have double arrows\footnote{We believe that the absence of double arrows can be deduced from the absence of loops}, and that each puncture is an endpoint of at least four arcs of $\sigma$.
\item[(b)] For any triangulation $\sigma$ as in (a), any potential $S'$ not involving any cycle cyclically-equivalent to a term appearing $\ssigma$, and any collection $\bx=(x_p)_{p\in\punct}$ of non-zero scalars indexed by the punctures of $\surf$, the QP $(\qsigma,S(\sigma,\bx)+S')$ is right-equivalent to $(\qsigma,S(\sigma,\bx))$, where $S(\sigma,\bx)$ is the potential obtained from $\ssigma$ by multiplying each cycle surrounding a puncture $p$ by $x_p$. (The proof of this fact is rather technical and somewhat delicate: the right-equivalence between $(\qsigma,S(\sigma,\bx)+S')$ and $(\qsigma,S(\sigma,\bx))$ given in \cite{G-LF-S} is defined as the composition of three algebra automorphisms of $\complete{\qsigma}$ that are not given explicitly, but rather obtained as limits of certain sequences of automorphisms. The convergence of these sequences is a delicate issue.)
\item[(c)] For any triangulation $\sigma$ as in (a), every non-degenerate potential on $\qsigma$ is right-equivalent to $S(\sigma,\bx)+S'$ for some $S'$ and $\bx=(x_p)_{p\in\punct}$ as in (b).
\item[(d)] For any triangulation as in (a) and any collection $\bx=(x_{p})_{p\in\punct}$ as in (b), the QP $(\qsigma,S(\sigma,\bx))$ is right-equivalent to $(\qsigma,\lambda S(\sigma))$ for some non-zero scalar $\lambda\in\ka$. (For this we use the fact that $\ka$ is algebraically closed: there is an $n$ for which the ground field must have $n^{\operatorname{th}}$ roots of all its elements.)
\item[(e)] For arbitrary potentials $W$ and $W'$ on a given quiver, and any non-zero scalar $\lambda$, if $(Q,W)$ is right-equivalent to $(Q,\lambda W')$, then the QP-mutation $\mu_i(Q,W)$ is right-equivalent to $(\overline{Q},\lambda\overline{W'})$, where $(\overline{Q},\overline{W'})=\mu_i(Q,W')$.
\end{itemize}

For the proof of Part (2) of Theorem \ref{thm:empty-bound-uniqueness}, Ladkani notes that, for once-punctured surfaces with empty boundary and positive genus, the proof of the first statement of Theorem \ref{thm:ideal-flip->QP-mut}, given in \cite{L1} for the QPs of the form $(\qtau,\stau)$ with $\tau$ ideal triangulation, can be easily adapted to show the following: If $\tau$ and $\sigma$ are ideal triangulations of a once-punctured surface with empty boundary and positive genus, and $\tau$ and $\sigma$ are related by the flip of an arc $i$, then $\mu_i(\qtau,\stau^{(3)})$ is right-equivalent to $(\qsigma,\ssigma^{(3)})$, where $\stau^{(3)}$ (resp. $\ssigma^{(3)}$) denotes the degree-3 component of $\stau$ (resp. $\ssigma$). This fact has the non-degeneracy of $(\qtau,\stau^{(3)})$ as a straightforward consequence. That $(\qtau,\stau^{(3)})$ is not right-equivalent to $(\qtau,\lambda\stau)$ for any $\lambda\in\mathbb{C}^*$ follows from the fact that the former QP has infinite-dimensional Jacobian algebra, whereas, due to Ladkani's result Theorem \ref{thm:empty-bound-fin-dim}, the latter QP has finite-dimensional Jacobian algebra.

The proof of part (3) is identical to the proof of part (1) if $\surf$ is a sphere with at least six punctures. The sphere with five punctures is treated separately, but with an argument which is similar to the proof part (1).
\end{proof}

For positive-genus surfaces with empty boundary and exactly two punctures we have the following:

\begin{conjecture}\cite{G-LF-S} If $\tau$ is a tagged triangulation of a positive-genus surface $\surf$ with empty boundary and exactly two punctures, then any non-degenerate potential on $\qtau$ is right-equivalent to a non-zero scalar multiple of $\stau$.
\end{conjecture}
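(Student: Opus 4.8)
The plan is to mimic, step by step, the proof of Theorem~\ref{thm:empty-bound-uniqueness}(1), which settles the positive-genus empty-boundary case whenever there are at least three punctures. That argument rests on five ingredients (a)--(e): the existence of a triangulation $\sigma$ with no loops, no double arrows, and every puncture incident to at least four arcs; the ``absorption'' statement that adding to $S(\sigma,\bx)$ any potential not involving the cycles of $\ssigma$ does not change the right-equivalence class; the fact that every non-degenerate potential on $\qsigma$ is right-equivalent to some $S(\sigma,\bx)+S'$; the normalization $(\qsigma,S(\sigma,\bx))$ being right-equivalent to $(\qsigma,\lambda S(\sigma))$ for some $\lambda\in\ka\setminus\{0\}$, using that $\ka$ is algebraically closed; and the compatibility of scalar rescaling with QP-mutation. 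Ingredients (b)--(e) are proved in \cite{G-LF-S} for an arbitrary number of punctures and go through verbatim in the present setting, so the entire problem is reduced to ingredient (a).

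Thus the first and decisive step is to \emph{show that every positive-genus surface $\surf$ with empty boundary and exactly two punctures admits an ideal triangulation $\sigma$ such that no arc of $\sigma$ is a loop, the quiver $\qsigma$ has no double arrows, and each of the two punctures is an endpoint of at least four arcs of $\sigma$}. Because $|\marked|=2$ there is very little room to maneuver, so I would build $\sigma$ by hand from a polygonal model: represent the genus-$g$ surface as an identified $4g$-gon, place the two punctures in the interior, and triangulate so that the arcs emanating from each puncture fan out through all the handles. The valency requirement is inexpensive --- such a triangulation has $6g$ arcs and $4g$ triangles, with $12g$ arc-endpoints to distribute over the two punctures --- so the whole difficulty lies in arranging the triangles so that no two arcs are simultaneously two sides of two common triangles with coherent orientation (which is exactly what produces a double arrow). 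For $g$ large this should be routine; the low-genus cases, above all the twice-punctured torus with only $6$ arcs and $4$ triangles, are tight and may demand either a clever ad hoc triangulation or a separate treatment.

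Granting (a), the remainder runs as follows. Given an arbitrary tagged triangulation $\tau$ and an arbitrary non-degenerate potential $S$ on $\qtau$, connect $\tau$ to the good ideal triangulation $\sigma$ by a sequence of flips of tagged triangulations (possible by Theorem~\ref{thm:flips-of-tagged-triangs}(3), since $\surf$ is not a closed surface with exactly one puncture), and transport $S$ along the corresponding sequence of QP-mutations, obtaining a non-degenerate potential $\overline{S}$ on $\qsigma$ from which $(\qtau,S)$ is recovered by the reverse sequence of mutations. By ingredients (b)--(d), $(\qsigma,\overline{S})$ is right-equivalent to $(\qsigma,\lambda S(\sigma))$ for some $\lambda\in\ka\setminus\{0\}$. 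Mutating back and applying (e) repeatedly --- together with Theorem~\ref{thm:tagged-flip<->mutation}(1) (applicable because a positive-genus surface is not a sphere) to identify the iterated mutation of $(\qsigma,S(\sigma))$ with $(\qtau,S(\tau))$, and the Derksen--Weyman--Zelevinsky fact that right-equivalent QPs have right-equivalent mutations --- we conclude that $(\qtau,S)$ is right-equivalent to $(\qtau,\lambda S(\tau))$, as desired.

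The main obstacle is squarely ingredient (a): exhibiting a loop-free, double-arrow-free triangulation with both punctures of valency at least four for \emph{every} positive-genus two-punctured surface, and in particular deciding whether such a triangulation exists at all for the twice-punctured torus. Should no such triangulation exist for some small surface, one would need a substitute --- for instance a weaker triangulation for which analogues of (b)--(d) can still be pushed through, or a covering-space reduction to a surface of larger genus and more punctures already covered by Theorem~\ref{thm:empty-bound-uniqueness}(1) (though it is far from clear how right-equivalence would descend along such a cover) --- and it is precisely this gap that keeps the statement at the level of a conjecture.
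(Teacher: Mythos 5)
This statement is a \emph{conjecture} in the paper, not a theorem, so there is no proof to match your proposal against --- the honest answer here is that no proof is known, and the paper says so explicitly. Your proposal correctly identifies the structure of the argument that settles the case of at least three punctures, and correctly pinpoints ingredient (a) as the sole obstruction. Where it goes astray is in suggesting that ingredient (a) might be achievable (``For $g$ large this should be routine''). In fact the paper states flatly, immediately after the conjecture, that ``the surfaces in the conjecture do not have triangulations without loops,'' so (a) is not merely delicate for small genus --- it is \emph{impossible} for every positive genus.

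The impossibility is a simple parity argument that you should be able to reconstruct: with empty boundary and exactly two punctures, every corner of every triangle sits at one of the two punctures, so walking around the three sides of a triangle toggles the puncture an odd number of times if every side is a non-loop arc; this forces the first corner to coincide with a different puncture than itself, a contradiction. Hence \emph{every} triangle of \emph{every} triangulation of a twice-punctured closed surface has a loop among its sides, and no loop-free triangulation exists. Your contingency plan (a covering-space reduction to a surface with more punctures, or a weakened version of (a) that still supports analogues of (b)--(d)) is precisely the kind of new idea that would be needed, and the fact that nobody has carried it out is exactly why the statement remains open.
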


The proof we have sketched of part (1) of Theorem \ref{thm:empty-bound-uniqueness} cannot be applied to prove this conjecture, since the surfaces in the conjecture do not have triangulations without loops.

For the once-punctured torus, we have the following result by Geuenich:

\begin{theorem}\cite{Geuenich}\label{thm:infinitely-many-pots-for-torus} Let $\tau$ be a triangulation of the once-punctured torus. Then:
 \begin{enumerate}\item An arbitrary potential $S\in\qtau$ is non-degenerate if and only if, up to a change of arrows, the degree-3 component of $S$ is equal to either $a_1b_1c_1+a_2b_2c_2$ or $a_1b_1c_2+a_1b_2c_1+a_2b_1c_1$.
 \item There exists an infinite sequence $(S_n)_{n\geq 0}$ of non-degenerate potentials on $\qtau$, with the property that $\lim_{n\to\infty}\dim_{\mathbb{C}}(\jacob{\qtau}{S_n})=\infty$. Hence this sequence has a subsequence $(S_{n_m})_{m\geq0}$ such that for $m_1\neq m_2$, the QP $(\qtau,S_{n_{m_1}})$ is not right-equivalent to $(\qtau,\lambda S_{n_{m_2}})$ for any non-zero scalar $\lambda$.
 \end{enumerate}
\end{theorem}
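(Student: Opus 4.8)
The plan is to work inside the Markov quiver. Since $\surf$ is the once-punctured torus, $\qtau$ is, up to isomorphism, the quiver $T_1$ of Theorem~\ref{thm:always-tame}: three vertices carrying a double arrow along each edge of an oriented $3$-cycle. Every arrow of $T_1$ goes $1\to 2$, $2\to 3$ or $3\to 1$, so every cycle has length divisible by $3$ and a potential decomposes as $S=S^{(3)}+S^{(6)}+\cdots$; after choosing bases for the three pairs of arrows, $S^{(3)}$ becomes a tensor in $\ka^2\otimes\ka^2\otimes\ka^2$, on which ``change of arrows'' acts through $\mathrm{GL}_2\times\mathrm{GL}_2\times\mathrm{GL}_2$. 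The first input is the classical orbit classification of $2\times2\times2$ tensors: the only orbits all three of whose \emph{flattenings} — the three $2\times 4$ matrices obtained by grouping two of the indices — have rank $2$ are the orbit of $a_1b_1c_1+a_2b_2c_2$ and the orbit of $a_1b_1c_2+a_1b_2c_1+a_2b_1c_1$, while every other orbit (zero, rank one, or ``biseparable'') has some flattening of rank $\le 1$. Thus Part~(1) is equivalent to: $(\qtau,S)$ is non-degenerate if and only if every flattening of $S^{(3)}$ has rank $2$.

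I would then examine one mutation $\mu_v(\qtau,S)$. Inspection of $\widetilde{\mu}_v(S)$ shows that its degree-$2$ component, viewed as a bilinear pairing between the two arrows not incident to $v$ and the four length-two paths through $v$, is exactly the flattening of $S^{(3)}$ at $v$. By the Splitting Theorem~\ref{thm:splittingthm}, passing to the reduced part deletes precisely $2r$ arrows, $r$ of each kind, where $r$ is the rank of that pairing; since the two kinds of arrows run in opposite directions, the mutated quiver acquires a $2$-cycle when $r\le 1$ and is again isomorphic to $T_1$ when $r=2$. This gives the ``only if'' direction at once: if some flattening of $S^{(3)}$ has rank $\le 1$, then $\mu_v(\qtau,S)$ is not $2$-acyclic for that $v$, regardless of the higher-degree part of $S$.

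For ``if'' I would push the computation one degree further. On $T_1$ the potential $\widetilde{\mu}_v(S)$ has homogeneous components only in degrees $2,3,4,6,8,\dots$, and the degree-$3$ component is the \emph{universal} term $\Delta_v(Q)$, independent of $S$. Running the reduction — the linear change of arrows that normalizes the degree-$2$ part, followed by the convergent product of automorphisms from the proof of Theorem~\ref{thm:splittingthm} — while tracking degree $3$, one finds that the contribution produced by $\Delta_v(Q)$ cancels against itself, so that $\mu_v(\qtau,S)^{(3)}$, up to change of arrows, depends only on $S^{(3)}$ up to change of arrows. Now I invoke two facts already in the paper: $(\qtau,\stau)$ is non-degenerate with $\stau^{(3)}$ the sum of the two $3$-cycles of $\tau$ (which is, after relabelling, $a_1b_1c_1+a_2b_2c_2$), and $(T_1,W_1)$ of \eqref{eq:wild-pot-for-torus-without-bound} is non-degenerate with $W_1=a_1b_1c_2+a_1b_2c_1+a_2b_1c_1$ homogeneous of degree $3$; since a right-equivalence preserves the lowest-degree part of a potential up to change of arrows, and since flips correspond to QP-mutations, it follows that $\mu_v$ sends each of the two orbits into itself. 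An induction on the length of the mutation sequence then shows that every iterated mutation of a QP whose degree-$3$ part lies in one of these two orbits remains on $T_1$, hence is $2$-acyclic; this proves non-degeneracy and finishes Part~(1). The delicate step is the bookkeeping of degree $3$ through the reduction — elementary but fiddly.

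For Part~(2), Part~(1) does the heavy lifting: any potential $S_n:=\stau^{(3)}+U_n$, with $U_n\in\maxid^{d_n}$ arbitrary of lowest degree $d_n$, is automatically non-degenerate. Since $\partial_\alpha(S_n)=\partial_\alpha(\stau^{(3)})+\partial_\alpha(U_n)$ with $\partial_\alpha(U_n)\in\maxid^{d_n-1}$, the ideals $J(S_n)$ and $J(\stau^{(3)})$ have the same image in $\complete{\qtau}/\maxid^{d_n-1}$, so $\jacob{\qtau}{S_n}$ surjects onto $\jacob{\qtau}{\stau^{(3)}}/\maxid^{d_n-1}$. Now $\jacob{\qtau}{\stau^{(3)}}$ is the infinite-dimensional non-degenerate Jacobian algebra of Theorem~\ref{thm:empty-bound-uniqueness}(2): a basis is given by the paths in which consecutive arrows carry distinct indices, and there are six of each positive length, so the quotient above has dimension $3+6(d_n-2)$ and hence $\dim_\ka\jacob{\qtau}{S_n}\ge 3+6(d_n-2)$. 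Taking $U_n:=(\stau-\stau^{(3)})^{\,n}$, the $n$-fold self-concatenation of the length-$6$ cycle occurring in $\stau$ (so $d_n=6n$ and $S_1=\stau$), gives $\dim_\ka\jacob{\qtau}{S_n}\ge 36n-9\to\infty$. What remains is to verify that each $\jacob{\qtau}{S_n}$ is nevertheless \emph{finite}-dimensional; granting this, along a subsequence on which the dimensions strictly increase the algebras $\jacob{\qtau}{S_{n_m}}$ are pairwise non-isomorphic, and since right-equivalent QPs have isomorphic Jacobian algebras and $\jacob{Q}{\lambda S}=\jacob{Q}{S}$ for $\lambda\ne0$, the $S_{n_m}$ are pairwise non-right-equivalent even up to rescaling. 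The main obstacle in Part~(2) is precisely this finite-dimensionality: it does not follow from Part~(1) (indeed $\jacob{\qtau}{\stau^{(3)}}$ is infinite-dimensional), and has to be established by a careful analysis of the rewriting system given by the relations $\partial_\alpha(S_n)=0$, along the lines of the finite-dimensionality proof for $\stau$.
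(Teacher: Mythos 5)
The paper cites this theorem from Geuenich's thesis without reproducing a proof, so there is nothing to compare your proposal against; I can only assess it on its own terms.

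Your Part~(1) strategy --- reduce to the orbit classification of $2\times2\times2$ tensors under $\mathrm{GL}_2\times\mathrm{GL}_2\times\mathrm{GL}_2$, and identify the flattening at $v$ with the degree-$2$ component of $\widetilde{\mu}_v(S)$ --- is sound, and the ``only if'' direction is clean. The key lemma for ``if'' (that the orbit of $\mu_v(\qtau,S)^{(3)}$ depends only on the orbit of $S^{(3)}$) is true, but your justification, that ``the contribution produced by $\Delta_v(Q)$ cancels against itself,'' misdescribes the reduction. What actually happens: after a linear change normalizing the degree-$2$ part to $a_1b_1+a_2b_2$, the first DWZ automorphism (with $\ell=2$) removes the $a_ku_k+b_kv_k$ portion of the transformed $\Delta_v(Q)$; all subsequent automorphisms have depth at least $2$ and therefore cannot alter degree $3$; and what survives in degree $3$ is precisely the piece of $\Delta_v(Q)$ supported on the six kept arrows. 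That piece depends on the normalizing linear change, but its $\mathrm{GL}_2\times\mathrm{GL}_2\times\mathrm{GL}_2$-orbit is well-defined by uniqueness of the reduced part in Theorem~\ref{thm:splittingthm}. With the lemma stated and argued this way, your bootstrap via the non-degenerate reference QPs $(\qtau,\stau)$ and $(T_1,W_1)$ closes the argument correctly.

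Part~(2) has a genuine gap, which you flag but do not fill: finite-dimensionality of $\jacob{\qtau}{S_n}$. The surjection onto $\jacob{\qtau}{\stau^{(3)}}/\maxid^{d_n-1}$ and the resulting lower bound are fine, and Part~(1) gives non-degeneracy, but the theorem needs the dimensions to diverge through \emph{finite} values, and the ``hence'' clause separates right-equivalence classes by $\dim\jacob{\qtau}{S_{n_m}}$, which is useless if any of these algebras is infinite-dimensional. This cannot be taken for granted: $\jacob{\qtau}{\stau^{(3)}}$ itself is infinite-dimensional, and note that perturbing $\stau$ rather than $\stau^{(3)}$ would not help, since by Lemma~8.18 of \cite{G-LF-S} (used in the proof of Theorem~\ref{thm:non-empty-bound-uniqueness} with $m=3$) any $\stau+U$ with $U\in\maxid^4$ is right-equivalent to $\stau$ and hence of constant dimension. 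So one must argue directly from the relations $\partial_\alpha(S_n)=0$ that sufficiently long paths vanish in $\jacob{\qtau}{S_n}$ --- a nilpotency estimate in the spirit of the finite-dimensionality proof for $\stau$, but carried out for each $n$. Until that is supplied, Part~(2) is incomplete.
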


We expect that Part (2) of Theorem \ref{thm:infinitely-many-pots-for-torus} can be generalized to any once-punctured surface without boundary.

\section{Some applications}\label{sec:applications}

Quivers with potentials can be thought of as a tool to obtain categories. Besides the module categories of Jacobian algebras, there are other categories associated to QPs that are of interest not only to representation-theorists, but to authors from other areas as well. We give a very rough description of a couple of these categories, the reader is referred to \cite{Amiot-gldim2} and \cite[Section 3]{Amiot-survey} for precise definitions, statements and citations.

Let $(Q,S)$ be a non-degenerate QP, and let $\overline{Q}$ be the graded quiver whose vertex set is $Q_0$ and whose arrows are:
\begin{itemize}
\item[In degree 0:] All arrows of $Q$;
\item[In degree -1:] an arrow $a^*:i\to j$ for each arrow $a:j\to i$ of $Q$;
\item[In degree -2:] an arrow $t_i:i\to i$ for each vertex $i\in Q_0$.
\end{itemize}
Every path on $\overline{Q}$ has non-positive degree (defined as the sum of the degrees of its constituent arrows), and for $\ell\in\mathbb{Z}$ an element $u\in\complete{\overline{Q}}$ is said to have degree $\ell$ if all its constituent paths have degree $\ell$. Letting $\widehat{\Gamma}^{(\ell)}$ denote the set of all degree-$\ell$ elements of $\complete{\overline{Q}}$, we see that $\widehat{\Gamma}^{(\ell)}$ consists of all possibly infinite $\ka$-linear combinations of paths of length $\ell$, that
\begin{equation}\label{eq:Ginzburg-grading}
\complete{\overline{Q}}=\prod_{\ell\in\mathbb{Z}}\widehat{\Gamma}^{(\ell)}
\end{equation}
as vector spaces, and that $\widehat{\Gamma}^{(0)}=\complete{Q}$. There is a differential $d$ on $\complete{\overline{Q}}$ that can be defined as the unique degree-$1$ continuous $\ka$-linear map that satisfies the rules
$$
d(a)=0 \ \ \  \text{and} \ \ \  d(a^*)=\partial_a(S) \ \ \ \text{for all $a\in Q_1$},
$$
$$
d(t_i)=e_i\left(\sum_{a\in Q_1}[a,a^*]\right)e_i \ \ \ \text{for all $i\in Q_0$},
$$
together with the Leibniz rule $d(uv)=(du)v+(-1)^\ell udv$ for all homogeneous $u\in \widehat{\Gamma}^{(\ell)}$ and $v$.
The complete path algebra $\complete{\overline{Q}}$, together with the grading \eqref{eq:Ginzburg-grading} and the differential $d$ above, is called the \emph{complete Ginzburg DG algebra}\footnote{\emph{DG} for \emph{differential graded}} of $(Q,S)$, denoted $\Ginz{Q}{S}$.

Via the \emph{derived category} of the category of DG modules over $\Ginz{Q}{S}$, one arrives at a \emph{3-Calabi-Yau triangulated} category $\mathcal{D}\Ginz{Q}{S}$. When the Jacobian algebra $\jacob{Q}{S}$ is finite-dimensional, Amiot defines the \emph{generalized cluster category} $\mathcal{C}(Q,S)$ as certain quotient of a subcategory of $\mathcal{D}\Ginz{Q}{S}$. The generalized cluster category turns out to be \emph{$\Hom$-finite 2-Calabi-Yau triangulated}. Amiot and Keller-Yang show that QP-mutations induce equivalences of categories:

\begin{theorem}\label{thm:categories-from-QPs} Let $(Q,S)$ be a non-degenerate QP and $i\in Q_0$. Then
\begin{enumerate}
\item $(Q,S)$ induces a canonical \emph{$\operatorname{t}$-structure} on $\mathcal{D}\Ginz{Q}{S}$ whose heart $\mathcal{A}_{(Q,S)}$ is equivalent to the module category of $\jacob{Q}{S}$;
\item\cite{KY} there are two canonical equivalences of triangulated categories $\Phi_{i}^+,\Phi_i^{-}:\mathcal{D}\widehat{\Gamma}(\mu_i(Q,S))\rightarrow\mathcal{D}\Ginz{Q}{S}$, such that the images of the $\operatorname{t}$-structure induced by $\mu_i(Q,S)$ on $\mathcal{D}\widehat{\Gamma}(\mu_i(Q,S))$ are the \emph{right} and \emph{left tilts} of the $\operatorname{t}$-structure induced by $(Q,S)$ on $\mathcal{D}\Ginz{Q}{S}$;
\item\cite{Amiot-gldim2,Amiot-survey} there is a \emph{cluster-tilting} object $T_{(Q,S)}$ of $\mathcal{C}(Q,S)$ canonically attached to $(Q,S)$;
\item\cite{KY,Amiot-gldim2,Amiot-survey} $\mathcal{C}(\mu_i(Q,S))$ is equivalent to $\mathcal{C}(Q,S)$ by means of an equivalence of triangulated categories that sends $T_{\mu_i(Q,S)}$ to the cluster-tilting object of $\mathcal{C}(Q,S)$ obtained from $T_{(Q,S)}$ by \emph{IY-mutation}\footnote{\emph{IY} after \emph{Iyama-Yoshino}} with respect to $i$.
\end{enumerate}
\end{theorem}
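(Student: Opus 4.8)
These four statements will not be established from scratch; the plan is to assemble them from the constructions of Ginzburg, Amiot and Keller--Yang. For (1), the essential point is that the complete Ginzburg DG algebra $\Ginz{Q}{S}$ is concentrated in non-positive cohomological degrees. First one checks, directly from the definition of the differential $d$, that $H^0(\Ginz{Q}{S})\cong\jacob{Q}{S}$. Then the finite-dimensional derived category $\mathcal{D}^b_{\mathrm{fd}}\Ginz{Q}{S}$ (where the relevant objects live) carries its standard t-structure, given by the cohomological truncation functors $\tau_{\leq 0},\tau_{\geq 0}$; its heart consists of those DG modules whose cohomology is concentrated in degree $0$, and the functor $H^0$ identifies this heart with $\operatorname{mod}H^0(\Ginz{Q}{S})=\operatorname{mod}\jacob{Q}{S}$. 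This is the canonical t-structure of the statement.

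For (2), one must confront the fact that $\mu_i(Q,S)$ is defined only after splitting off a trivial QP (Theorem~\ref{thm:splittingthm}). Following Keller--Yang \cite{KY}, I would first prove that right-equivalent QPs have isomorphic complete Ginzburg DG algebras, and that the Ginzburg DG algebra of a QP is quasi-isomorphic to that of its reduced part; this reduces the comparison to that of $\Ginz{Q}{S}$ with $\widehat{\Gamma}$ of the pre-mutation $\widetilde{\mu}_i(Q,S)$. For the latter one exhibits an explicit DG algebra together with quasi-isomorphisms onto both Ginzburg DG algebras---in effect an explicit tilting bimodule complex---which produces the two equivalences $\Phi_i^{\pm}$. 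The extra input needed is that these equivalences are silting mutations in the sense of Aihara--Iyama, so that they transform a t-structure into its Happel--Reiten--Smal\o{} tilt at the torsion pair cut out by the (co)silting object; tracking the standard t-structure of $\mathcal{D}\widehat{\Gamma}(\mu_i(Q,S))$ through $\Phi_i^{+}$ (resp.\ $\Phi_i^{-}$) then yields its right (resp.\ left) tilt on $\mathcal{D}\Ginz{Q}{S}$.

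For (3) and (4), assume $\jacob{Q}{S}$ is finite-dimensional, so that $\mathcal{D}^b_{\mathrm{fd}}\Ginz{Q}{S}\subseteq\operatorname{per}\Ginz{Q}{S}$ and Amiot's cluster category is the Verdier quotient $\mathcal{C}(Q,S)=\operatorname{per}\Ginz{Q}{S}/\mathcal{D}^b_{\mathrm{fd}}\Ginz{Q}{S}$ \cite{Amiot-gldim2,Amiot-survey}. I would take $T_{(Q,S)}$ to be the image of the free module $\Ginz{Q}{S}$; the Calabi--Yau property of $\mathcal{D}^b_{\mathrm{fd}}$, together with the vanishing of degree-$1$ self-extensions of $\Ginz{Q}{S}$, gives that $T_{(Q,S)}$ is cluster-tilting, which is (3). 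For (4) one checks that the equivalences $\Phi_i^{\pm}$ of (2) restrict to equivalences of the subcategories $\operatorname{per}$ and $\mathcal{D}^b_{\mathrm{fd}}$, hence descend to the Verdier quotients; the description of the underlying silting mutation at the level of the tilting complex then shows that the image of $T_{\mu_i(Q,S)}$ is precisely the Iyama--Yoshino mutation of $T_{(Q,S)}$ at $i$.

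The main obstacle is the bookkeeping in (2): because QP-mutation is defined only up to right-equivalence and uses the reduced-part construction, one must carefully establish invariance of the complete Ginzburg DG algebra under right-equivalence and under removing a trivial direct summand, and then build the explicit quasi-isomorphisms linking $\Ginz{Q}{S}$ to $\widehat{\Gamma}(\widetilde{\mu}_i(Q,S))$---all in the complete, pro-nilpotent setting, where the convergence of the infinite processes involved must be controlled, just as in the proof of Theorem~\ref{thm:splittingthm}.
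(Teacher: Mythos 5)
The paper does not prove this theorem at all: consistent with its survey nature, it is stated as a compilation of results from \cite{KY} and \cite{Amiot-gldim2, Amiot-survey} and no argument is given. Your sketch reconstructs the reasoning of those references quite faithfully: the identification of the heart of the standard $\operatorname{t}$-structure on the finite-dimensional derived category of $\Ginz{Q}{S}$ with $\operatorname{mod}\jacob{Q}{S}$ via $H^0$ (using that the DG algebra is concentrated in non-positive degrees); Keller--Yang's reduction from $\mu_i(Q,S)$ to the pre-mutation $\widetilde{\mu}_i(Q,S)$ via invariance of the complete Ginzburg DG algebra under right-equivalence and under passing to the reduced part; the two equivalences realized as left/right silting (or Happel--Reiten--Smal\o) tilts of the canonical $\operatorname{t}$-structure; and, under finite-dimensionality of $\jacob{Q}{S}$, the Verdier-quotient definition of Amiot's cluster category with $T_{(Q,S)}$ the image of the free module, together with the descent of $\Phi_i^{\pm}$ to the quotient and the identification of the image of $T_{\mu_i(Q,S)}$ with the Iyama--Yoshino mutation.

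One small inaccuracy in your account of (2): Keller--Yang do not produce an intermediate DG algebra equipped with quasi-isomorphisms onto both Ginzburg DG algebras. Rather, they directly construct a DG $\Ginz{Q}{S}$-module (two variants, obtained by replacing the summand corresponding to the vertex $i$ with a left or right approximation) whose derived endomorphism DG algebra is quasi-isomorphic to $\widehat{\Gamma}(\widetilde{\mu}_i(Q),\widetilde{\mu}_i(S))$; the functors $\Phi_i^{\pm}$ are then the induced derived tensor/Hom equivalences. This is a presentational point rather than a mathematical gap, but worth getting right if you intend to cite the mechanism. Your closing remark about controlling convergence in the complete, pro-nilpotent setting is well placed---that is a real technical issue addressed carefully in \cite{KY}, and it parallels the care required in the proof of Theorem~\ref{thm:splittingthm}.
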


The hearts of $\mathcal{D}\Ginz{Q}{S}$ that can be obtained from $\mathcal{A}_{(Q,S)}$ by finite sequences of tilts are often called \emph{canonical hearts}, whereas the cluster-tilting objects of $\mathcal{C}_{(Q,S)}$ that can be obtained from $T_{(Q,S)}$ by finite-sequences of IY-mutations are called \emph{reachable} cluster-tilting objects.

\begin{remark}\label{thm:keller-yang-technicality} Since mutations of QPs are only defined up to right-equivalence, Parts (2) and (4) of Theorem \ref{thm:categories-from-QPs} make  an implicit use of the following fact: up to an equivalence of triangulated categories that takes $\operatorname{t}$-structures to $\operatorname{t}$-structures and tilts to tilts (resp. cluster-tilting objects to cluster-tilting objects and IY-mutations to IY-mutations), the category $\mathcal{D}\widehat{\Gamma}(Q,S)$ (resp. $\mathcal{C}(Q,S)$) does not change if we replace $(Q,S)$ with a QP which is right-equivalent to it. Actually, something stronger is true: $\mathcal{D}\widehat{\Gamma}(Q,S)$ (resp. $\mathcal{C}(Q,S)$) does not change if we replace $(Q,S)$ with a QP which is right-equivalent to $(Q,\lambda S)$ for some non-zero scalar $\lambda$.
\end{remark}

By Theorems \ref{thm:non-empty-bound-fin-dim} and \ref{thm:empty-bound-fin-dim}, every Jacobian algebra of the form $\jacob{\qtau}{\stau}$ is finite-dimensional. Thus, a combination of Theorems \ref{thm:tagged-flip<->mutation} and \ref{thm:categories-from-QPs} yields:

\begin{theorem}\label{thm:categories-from-surfaces} Let $\surf$ be a surface with marked points. If $\surf$ is not a sphere with less than 5 punctures, then there exist:
\begin{enumerate}
\item a 3-Calabi-Yau triangulated category $\mathcal{D}\surf$, with canonical hearts and tilts of canonical hearts combinatorially interpreted as tagged triangulations and flips of tagged triangulations, respectively.
\item A $\Hom$-finite 2-Calabi-Yau triangulated category $\mathcal{C}\surf$, with reachable cluster-tilting objects and IY-mutations of reachable cluster-tilting objects combinatorially interpreted as tagged triangulations and flips of tagged triangulations, respectively\footnote{When $\surf$ is a once-punctured surface with empty boundary, this statement needs a slight refinement.}; cf. \cite[Section 3.4]{Amiot-survey}, \cite[Theorem 4.10]{CI-LF}.
\end{enumerate}
\end{theorem}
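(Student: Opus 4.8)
The plan is to fix one tagged triangulation of $\surf$ and to transport the constructions recalled just before Theorem \ref{thm:categories-from-QPs} along flips. First note that, by Theorems \ref{thm:non-empty-bound-fin-dim} and \ref{thm:empty-bound-fin-dim} (and Theorem \ref{thm:propertiesofQP-mutations}(4) to pass from ideal to arbitrary tagged triangulations), every Jacobian algebra $\jacob{Q(\tau)}{S(\tau)}$ with $\tau$ a tagged triangulation of $\surf$ is finite-dimensional, and by Theorem \ref{thm:tagged-flip<->mutation} every QP $(Q(\tau),S(\tau))$ is non-degenerate; this is precisely where the hypothesis that $\surf$ is not a sphere with fewer than five punctures is used. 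Now fix a tagged triangulation $\tau_0$ of $\surf$. The complete Ginzburg DG algebra $\Ginz{Q(\tau_0)}{S(\tau_0)}$ is then defined, its derived category is 3-Calabi-Yau triangulated, and --- the Jacobian algebra being finite-dimensional --- Amiot's generalized cluster category $\mathcal{C}(Q(\tau_0),S(\tau_0))$ is defined and $\Hom$-finite 2-Calabi-Yau. Set $\mathcal{D}\surf:=\mathcal{D}\Ginz{Q(\tau_0)}{S(\tau_0)}$ and $\mathcal{C}\surf:=\mathcal{C}(Q(\tau_0),S(\tau_0))$.

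Next I would set up the combinatorial dictionary. Given an arbitrary tagged triangulation $\tau$ of $\surf$, choose by Theorem \ref{thm:flips-of-tagged-triangs}(3) a sequence of flips $\tau_0=\rho_0,\rho_1,\ldots,\rho_k=\tau$, where $\rho_j$ is obtained from $\rho_{j-1}$ by the flip of an arc $i_j$. By Theorem \ref{thm:tagged-flip<->mutation}, each such flip is realized by the QP-mutation $\mu_{i_j}$ up to right-equivalence --- and, in the single exceptional case of the five-punctured sphere, only up to rescaling the potential by a non-zero scalar, which by Remark \ref{thm:keller-yang-technicality} affects none of the categories or structures involved. Composing the Keller-Yang equivalences $\Phi^{\pm}_{i_j}$ of Theorem \ref{thm:categories-from-QPs}(2), respectively the cluster-category equivalences of Theorem \ref{thm:categories-from-QPs}(4), along this sequence yields an equivalence $\mathcal{D}\Ginz{Q(\tau)}{S(\tau)}\simeq\mathcal{D}\surf$, respectively $\mathcal{C}(Q(\tau),S(\tau))\simeq\mathcal{C}\surf$. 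Transporting along it the $\operatorname{t}$-structure whose heart is equivalent to $\operatorname{mod}\jacob{Q(\tau)}{S(\tau)}$ (Theorem \ref{thm:categories-from-QPs}(1)), respectively the cluster-tilting object $T_{(Q(\tau),S(\tau))}$ (Theorem \ref{thm:categories-from-QPs}(3)), attaches to $\tau$ a canonical heart of $\mathcal{D}\surf$, respectively a reachable cluster-tilting object of $\mathcal{C}\surf$. If $\sigma$ is obtained from $\tau$ by the flip of a tagged arc $i$, then $(Q(\sigma),S(\sigma))$ is right-equivalent, up to a scalar, to $\mu_i(Q(\tau),S(\tau))$; hence by Theorem \ref{thm:categories-from-QPs}(2) the canonical heart attached to $\sigma$ is a tilt at $i$ of the one attached to $\tau$, and by Theorem \ref{thm:categories-from-QPs}(4) the cluster-tilting object attached to $\sigma$ is the IY-mutation at $i$ of the one attached to $\tau$. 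Since the simple objects of $\operatorname{mod}\jacob{Q(\tau)}{S(\tau)}$ are indexed by the arcs of $\tau$, every tilt of a canonical heart and every IY-mutation of a reachable cluster-tilting object arises this way; together with the connectedness of the flip graph, this gives the asserted combinatorial interpretation.

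The step I expect to be the main obstacle is the well-definedness of this assignment, that is, the fact that the canonical heart --- and the reachable cluster-tilting object --- attached to $\tau$ does not depend on the chosen flip sequence from $\tau_0$ to $\tau$. By Theorem \ref{thm:flips-of-tagged-triangs}(3) this reduces to checking that any two flip sequences with the same endpoints induce the same identification inside $\mathcal{D}\surf$ (and inside $\mathcal{C}\surf$); this rests on the compatibility of the Keller-Yang equivalences with composition, on the involutivity of flips and of QP-mutations up to right-equivalence (Theorem \ref{thm:propertiesofQP-mutations}(2)), and on the scalar-invariance of Remark \ref{thm:keller-yang-technicality}, with the behaviour at the five-punctured sphere requiring extra care. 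Finally, when $\surf$ is a once-punctured surface with empty boundary --- in particular the once-punctured torus, whose Jacobian algebras are nonetheless finite-dimensional by Theorem \ref{thm:empty-bound-fin-dim} --- the flip graph of tagged triangulations is disconnected, consisting of two isomorphic components interchanged by the global tag-change, so Theorem \ref{thm:flips-of-tagged-triangs}(3) does not apply and the statement of Part (2) must be read one component at a time (equivalently, after identifying the two components through the tag-changing bijection); this is the slight refinement alluded to in the footnote.
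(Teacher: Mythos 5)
Your proof follows essentially the same route as the paper's own brief sketch: fix a tagged triangulation, build $\mathcal{D}\surf$ and $\mathcal{C}\surf$ from the Ginzburg DG algebra and Amiot's generalized cluster category of the corresponding QP, then invoke Theorem \ref{thm:tagged-flip<->mutation}, Theorem \ref{thm:categories-from-QPs} and Remark \ref{thm:keller-yang-technicality} to show that flips correspond to tilts and IY-mutations and that the resulting categories are independent of the chosen triangulation. You are somewhat more careful than the paper's one-paragraph sketch on a few points---deducing finite-dimensionality for all tagged (not merely ideal) triangulations via Theorem \ref{thm:propertiesofQP-mutations}(4) and flip-connectivity, explicitly flagging the well-definedness of the triangulation-to-heart assignment, and correctly identifying the disconnected tagged flip graph of once-punctured closed surfaces as the refinement promised in the footnote---but these are elaborations of the same argument, with the hard verifications deferred to \cite{Amiot-survey} and \cite{CI-LF} in both your account and the paper's.
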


Indeed, one defines $\mathcal{D}\surf=\mathcal{D}\Ginz{\qtau}{\stau}$ and $\mathcal{C}\surf=\mathcal{C}(\qtau,\stau)$ for any tagged triangulation $\tau$ of $\surf$. Up to equivalences of triangulated categories, $\mathcal{D}\surf=\mathcal{D}\Ginz{\qtau}{\stau}$ and $\mathcal{C}\surf=\mathcal{C}(\qtau,\stau)$ are independent of $\tau$ by Theorems \ref{thm:tagged-flip<->mutation} and \ref{thm:categories-from-QPs} (see also Remark \ref{thm:keller-yang-technicality}).

Recent work \cite{Bridgeland-Smith} of Bridgeland-Smith shows that spaces of \emph{Bridgeland stability conditions} on the categories $\mathcal{D}\surf$ can be realized as spaces of \emph{quadratic differentials} on the Riemann surface $\Sigma$. When $\Sigma$ has empty boundary, Smith \cite{Smith} has furthermore shown that $\mathcal{D}\surf$ can be interpreted as the \emph{Fukaya category} of a symplectic 6-manifold underlying certain Calabi-Yau-3 varieties that fiber over the surface $\Sigma$.

In physics, 
the QPs $\qstau$, as well as their QP-mutation compatibility with flips, have been used by
Alim-Cecotti-Cordova-Espahbodi-Rastogi-Vafa \cite{ACCERV1}, \cite{ACCERV2}, and Cecotti \cite{Cecotti}, in their study of \emph{$N=2$ quantum field theories} and associated \emph{BPS quivers and spectra}.

Let us give a (extremely rough) sketch of the passage from quadratic differentials to stability conditions when $\partial\Sigma=\varnothing$. We start with how quadratic differentials gives rise to a well-defined triangulated category. A \emph{generic} quadratic differential $\phi$ which is holomorphic on $\Sigma\setminus\marked$ and has poles of order 2 at each $p\in \marked$, induces a \emph{horizontal foliation} on $\Sigma$. A typical curve in this foliation joins either two poles or a pole and a zero of $\phi$. On the set of curves of the foliation that join poles of $\phi$, one can define an equivalence relation by setting two curves to be equivalent if they are parallel. By picking a system of representatives for this equivalence relation we obtain a triangulation $\tau_\phi$ of $\surf$. This triangulation $\tau_\phi$ gives rise to the category $\mathcal{D}\surf$ via its associated QP $(Q(\tau_\phi),S(\tau_\phi))$.

When we said ``generic" in the previous paragraph, we meant that the behavior of $\phi$ described takes place in an open subset of the space of meromorphic quadratic differentials on $\Sigma$ that have poles of order two precisely at the points of $\marked$. The space of generic quadratic differentials has several connected components, called \emph{chambers}. If we allow $\phi$ to vary inside a given chamber, the associated triangulation $\tau_\phi$ does not change. However if we allow $\phi$ to move from one chamber to another, the triangulation $\tau_\phi$ changes. The change suffered by $\tau_\phi$ is either a flip or a \emph{pop}. If it is a flip, by Theorems \ref{thm:tagged-flip<->mutation} and \ref{thm:categories-from-QPs} we see that the category $\mathcal{D}\surf$ does not change (but undergoes a tilt of canonical $\operatorname{t}$-structure). If it is a pop, the situation becomes somewhat subtle, but again, the category $\mathcal{D}\surf$ does not change.

So, when we pick a generic quadratic differential $\phi$, we are picking a $\operatorname{t}$-structure on $\mathcal{D}\surf$. The simples of the heart of this $\operatorname{t}$-structure are in one-to-one correspondence with the arcs in $\tau_\phi$. Each of these arcs is transverse to a curve that joins two zeros of $\phi$. We integrate $\sqrt{\phi}$ along each of these curves, thus obtaining an assignment of a complex number to each arc in $\tau_\phi$. This assignment extends uniquely to a group homomorphism from the Grothendieck group $K_{0}(\mathcal{D}\surf)$ to $\ka$. This group homomorphism is actually a stability condition on $\mathcal{D}\surf$.

We warn the reader that things are very far from being as simple as we have just described (for example, $\phi$ above is not any quadratic differential, but has to be  a \emph{complete and saddle-free GMN differential}). In reality, numerous non-trivial considerations are needed and the picture is a lot more complex (and beautiful) than we have made it seem here, see \cite{Bridgeland-Smith}.

We end the paper with a few remarks regarding the uniqueness of the categories $\mathcal{C}\surf$ and $\mathcal{D}\surf$.

\begin{remark} The fact that the category $\mathcal{C}\surf=\mathcal{C}(\qtau,\stau)$ is independent of $\tau$ does not mean that there cannot exist other generalized cluster categories, defined through other potentials on the same quivers $\qtau$, whose reachable cluster-tilting objects can be parameterized by tagged triangulations, with IY-mutations interpreted as flips of tagged triangulations. In other words, the sole fact that $\mathcal{C}\surf$ is well-defined does not imply that it is the unique generalized cluster category one can associate to a surface. The same comment goes for $\mathcal{D}\surf$.
\end{remark}

\begin{example} Suppose $\surf$ is a torus with exactly one boundary component and exactly one marked point. For each triangulation $\tau$ of $\surf$, let $W(\tau)$ be the potential given by \eqref{eq:wild-pot-for-torus-w-bound}. Then $\jacob{\qtau}{W(\tau)}$ is finite-dimensional, and for any two triangulations $\tau$ and $\sigma$ related by the flip of an arc $i$, the QPs $\mu_i(\qtau,W(\tau))$ and $(\qsigma,W(\sigma))$ are right-equivalent. Hence, the categories $\mathcal{D}_{\operatorname{wild}}=\mathcal{D}\Ginz{\qtau}{W(\tau)}$ and $\mathcal{C}_{\operatorname{wild}}=\mathcal{C}(\qtau,W(\tau))$ are independent of $\tau$, and satisfy the assertions made in Theorem \ref{thm:categories-from-surfaces}. However, since $\jacob{\qtau}{W(\tau)}$ is wild and $\jacob{\qtau}{\stau}$ is tame, the categories $\mathcal{C}_{\operatorname{wild}}$ and $\mathcal{C}\surf$ are most likely not equivalent.
\end{example}

\begin{remark} Let $\surf$ be a once-punctured surface with empty boundary and positive genus. The potentials $S(\tau)^{(3)}$ from Part (2) of Theorem \ref{thm:empty-bound-uniqueness} also satisfy that triangulations related  by a flip have QPs related by QP-mutation. Thus they also give rise to categories $\mathcal{D}\Ginz{\qtau}{S(\tau)^{(3)}}$ that are actually independent from $\tau$. However, since the Jacobian algebras $\jacob{\qtau}{S(\tau)^{(3)}}$ are infinite-dimensional, in order to obtain generalized cluster categories one has to apply Plamondon's construction \cite{Plamondon} rather than that of Amiot. In the case of the once-punctured torus, these comments also apply for the potential given by \eqref{eq:wild-pot-for-torus-without-bound}.
\end{remark}

\section*{Acknowledgements}

The first version of the present survey article was mainly based on a talk contributed by the author to the \emph{Taller de Vinculaci\'{o}n Matem\'{a}ticos Mexicanos J\'{o}venes en el Mundo}, held at the \emph{Centro de Investigaci\'{o}n en Matem\'{a}ticas} (CIMAT), Guanajuato, M\'{e}xico, in August 2012. I am grateful to the organizers of this meeting, No\'{e} B\'{a}rcenas-Torres, Fernando Galaz-Garc\'{i}a and M\'{o}nica Moreno-Rocha, for the opportunity of presenting my work.

The second version of this survey incorporates some recent results that did not appear in the first version. Quite a few of these results were obtained in joint work with Christof Geiss and Jan Schr\"oer, to whom I am grateful for many interesting discussions. I owe thanks to Jan Schr\"oer for the enthusiastic working atmosphere while I was a member of his research group as a postdoc at the Mathematisches Institut of Universit\"at Bonn.

\bibliographystyle{amsalpha}

\end{document}